\def\Cr{\color{red}}
\def\Cbe{\color{blue}}
\definecolor{gold}{rgb}{0.85,.66,0}
\definecolor{purple}{rgb}{.403,.067,.53}
\def\card{\#}
\providecommand{\R}{\mathbb{R}}
\providecommand{\N}{\mathbb{N}}
\newtheorem{theorem}{Theorem}
\newtheorem{definition}{Definition}
\newtheorem{proposition}{Proposition}
\newtheorem{lemma}{Lemma}
\newtheorem{remark}{Remark}
\def\Lip{\mbox{\rm Lip}}
\title{Quantitative compactness estimates\\ for Hamilton-Jacobi equations}
\author{
Fabio Ancona\footnote{
Dipartimento di Matematica,
Universit\`a di Padova,
Via Trieste 63, 35121 Padova, Italy
} ,
Piermarco Cannarsa\footnote{
Dipartimento di Matematica,
Universit\`a di Roma 'Tor Vergata, 
Via della Ricerca Scientica 1, 00133 Roma, Italy} ,
Khai T. Nguyen\footnote{
Department of Mathematics, Penn State University,
University Park, Pa. 16802, U.S.A.}
}
\begin{document}
\maketitle
\begin{abstract}
We study quantitative compactness estimates in $\mathbf{W}^{1,1}_{loc}$
for the map $S_t$, $t>0$
that associates to every given initial data $u_0\in \Lip (\mathbb{R}^N)$ the corresponding solution
$S_t u_0$ of  a  Hamilton-Jacobi equation
$$
u_t+H\big(\nabla_{\!x}  u\big)=0\,,
\qquad t\geq 0,\quad x\in \mathbb{R}^N,
$$
with a uniformly convex Hamiltonian $H=H(p)$.
We provide upper and lower estimates of order $1/\varepsilon^N$
on the the Kolmogorov $\varepsilon$-entropy in $\mathbf{W}^{1,1}$ of the image through the map $S_t$
of sets of bounded, compactly supported initial data.
Estimates of this type are inspired by a question posed by P.D. Lax~\cite{Lax02} within the context of
conservation laws, and
could provide a measure of the order of ``resolution'' of a numerical method implemented for this equation.
\end{abstract}

\medskip
\noindent
{\bf Key words:}  Hamilton-Jacobi equations, Hopf-Lax semigroup, compactness estimates, Kolmogorov entropy, 
semiconcave functions

\medskip
\noindent
{\bf MSC Subject classifications:} 49L05, 47H20, 49L20, 47H08


\section{Introduction}
Consider a first-order Hamilton-Jacobi equation 
\begin{equation}
\label{HJ}
u_t(t,x)+H\big(\nabla_{\!x}  u(t,x)\big)=0\,,
\qquad t\geq 0,\quad x\in \mathbb{R}^N,
\end{equation}
where $u=u (t,x)$, $\nabla_{\!x} u =(u_{x_1},\dots, u_{x_N})$,
and $H:\mathbb{R}^N\rightarrow\mathbb{R}$ is a smooth
Hamiltonian.
It is well-known that, because of the nonlinear dependence of the characteristic speeds
on the gradient of the solution,
in general classical solutions $u(t,x)$ of the Cauchy problem for~\eqref{HJ} develop singularities
of $\nabla_{\!x} u(t,x)$ in finite time, no matter how smooth the initial data 
\begin{equation}
\label{in-data}
u(0,\cdot)= u_0
\end{equation}
are assumed to be.
To cope with this difficulty, M.G. Crandall and P.-L. Lions introduced in~\cite{CL} the notion of viscosity solution,
a generalized solution
of~\eqref{HJ}, 
which allows to establish global existence, uniqueness and stability results for the Cauchy problem~\eqref{HJ}-\eqref{in-data},
under suitable  assumption on~$H$.
We refer to~\cite{BCD} for a review of the concept of viscosity solution and the related theory for equation
of type~\eqref{HJ} that has been developed in the last thirty years.
\pagebreak

The Hamiltonian $H$ is required here to satisfy the \textbf{Standing Assumption}:
\begin{itemize}
%
\item[{\bf (H1)}] $H\in C^2(\mathbb{R}^N)$ and is uniformly convex, i.e.
$$D^2H(p)\geqslant\alpha\cdot \mathbb{I}_{N}\qquad\forall p\in\mathbb{R}^N,$$

where $\alpha$ is a positive constant, $\mathbb{I}_{N}$ is the $N\times N$ identity matrix,
and the inequality is understood in the sense that $D^2H(p)-\alpha\cdot \mathbb{I}_{N}$ is a positive semidefinite matrix. 
\end{itemize}
The assumption {\bf (H1)}
guarantees that, if the initial data $u_0:\mathbb{R}^N\to\mathbb{R}$
is Lipschitz continuous and bounded, the Cauchy problem~\eqref{HJ}-\eqref{in-data}
admits a unique viscosity solution $u(t,x)$ which is Lipschitz continuous and 
semiconcave in $x$ 
with semiconcavity constant $1/(\alpha t)$. This means that $x\mapsto u(t,x)-1/(2\alpha t) |x|^2$
is a concave function. In turn, this fact implies that $u(t,\cdot)$ is almost everywhere
twice differentiable and that $\nabla_{\!x} u(t,\cdot)$ has locally bounded variation,
i.e. that the distributional Hessian $D^2_x u(t,\cdot)$ is a symmetric matrix of Radon measures.

Furthermore, one can define a semigroup of viscosity solutions of~\eqref{HJ}
$$\big\{S_t: \Lip (\mathbb{R}^N)\to \Lip(\mathbb{R}^N)\big\}_{t\geqslant 0}$$
that associates to every  initial data $u_0\in \Lip (\mathbb{R}^N)$
the unique viscosity solution $S_t u_0 := u(t,\cdot)$ of the corresponding Cauchy problem~\eqref{HJ}-\eqref{in-data}.
It is not difficult to see that the semigroup map $S_t$ is continuous 
when it is restricted to subsets of $\Lip (\mathbb{R}^N)$ 
bounded  in $\mathbf{W}^{1,\infty}$, taking the $\mathbf{W}^{1,1}_{loc}$-topology 
on $\Lip (\mathbb{R}^N)$
(cfr.~Proposition~\ref{St-W11-continuity} in Section~\ref{sec:prelim}). Moreover,
thanks to the uniform semiconcavity constant of 
$S_t u_0$, for  $u_0\in  \Lip (\mathbb{R}^N)$,  
applying Helly's compactness theorem and a Poincar\'e inequality for BV-functions,
one can show that the restriction of $S_t$, $t>0$, to such sets is compact  with respect to
the  $\mathbf{W}^{1,1}_{loc}$-topology. This property reflects the irreversibility
feature of the equation~\eqref{HJ} when the Hamiltonian $H$ satisfies the convexity assumption~{\bf (H1)}.

The aim of this paper is to provide a quantitative estimate 
of this regularizing effect of the semigroup map.
Namely, having in mind a question posed by P.D. Lax~\cite{Lax02} within the context of conservation laws,
we wish to estimate the Kolmogorov $\varepsilon$-entropy in $\mathbf{W}^{1,1}$ of the image through the map $S_t$
of sets of bounded, compactly supported initial data $\mathcal{C}\subset \Lip (\mathbb{R}^N)$
of the form
\begin{equation}
\label{Cclass}
\mathcal{C}_{[L,M]}:=
\Big\lbrace{u_0\in \Lip(\mathbb{R}^N)\ \big|\ \mathrm{supp}(u_0)\subset [-L,L]^N\,,\; 
\Lip[u_0]\leqslant M\Big\rbrace}.
\end{equation}
Actually, since the solution of the Cauchy problem for~\eqref{HJ} with zero initial data is the function $u(t,x)=-t\cdot H(0)$,
it will be convenient to analyze  the Kolmogorov $\varepsilon$-entropy in $\mathbf{W}^{1,1}$ of the translated set $S_t(\mathcal{C})+ t\cdot H(0)$,
with $\mathcal{C}$ as in~\eqref{Cclass}.
We recall the notion of $\varepsilon$-entropy introduced by A.~Kolmogorov~\cite{KT}:
\begin{definition}\label{DefKE}
Let $(X,d)$ be a metric space and let $K$ be a totally bounded subset of $X$. For $\varepsilon>0$, let $\mathcal{N}_{\varepsilon}(K|X)$  be the minimal number of sets in a cover of $K$ by subsets of $X$ having diameter no larger than $2\varepsilon$. Then the $\varepsilon$-entropy of $K$ is defined as
\begin{equation} \nonumber
\mathcal H_{\varepsilon}(K|X) := \log_{2} \mathcal{N}_{\varepsilon}(K|X).
\end{equation}
Throughout the paper, we will call {\em $\varepsilon$-cover} a cover of $K$ by subsets of $X$ having diameter no larger than $2\varepsilon$.
\end{definition}
Entropy numbers play a central role in various areas of information theory and statistics
as well as of ergodic and learning theory. In the present setting, this concept  could 
provide a measure of the order of ``resolution'' and of the ``complexity'' of a numerical scheme, as suggested in~\cite{Lax78}.
Roughly speaking, the order of magnitude of the $\varepsilon$-entropy
should indicate the minimum number of operations that one should perform 
in order to obtain an approximate solution with a precision of order $\varepsilon$
with respect to the considered topology.

In this paper we provide both upper and lower bounds of order $1/\varepsilon^N$ on the $\varepsilon$-entropy 
in~$\mathbf{W}^{1,1}$ of $S_t(\mathcal{C})+ t\cdot H(0)$, for sets $\mathcal{C}$ as in~\eqref{Cclass}, thus showing that
such an $\varepsilon$-entropy is of size $\approx 1/\varepsilon^N$.
Without loss of generality, we will assume that the Hamiltonian satisfies  further
\begin{itemize}
\item[\bf (H2)]\hspace{6cm} $\nabla H(0)=0$,
\end{itemize}
otherwise the transformations $x\to x + t \nabla H(0)$ and $H(p) \to H(p) - \langle \nabla H(0), p\rangle$
reduce the general case to this one.
%
Specifically, we prove the following
\begin{theorem}
\label{upper-lower-estimate}
Let $H:\R^N\to \R$ be  a function satisfying  the assumptions~{\bf (H1)-(H2)}
and $\{S_t\}_{t\geqslant 0}$ be the semigroup of viscosity solutions generated by~\eqref{HJ}
on the domain $\Lip (\mathbb{R}^N)$. 
Then, given  $L,M,T>0$,  
for every $\varepsilon>0$ sufficiently small
the following estimates hold:
\begin{equation}
\label{Upper-est-H}
\mathcal{H}_{\epsilon}\Big(S_T(\mathcal C_{[L,M]})+T\cdot H(0)\ \big|\ \mathbf{W}^{1,1}(\mathbb{R}^N)\Big)\leqslant \Gamma^+_{[L,M,N,T]}\cdot\frac{1}{\varepsilon^N}
\end{equation} 
with 
\begin{align}
\label{G+def}
\Gamma^+_{[L,M,N,T]}&:= \omega_N^N \cdot\! \bigg(4N\!\cdot\!\Big(1+ M+\big({1}/{(\alpha\,T)}+1\big)\cdot l_{[L,M,T]}\Big)\bigg)^{\!4N^2}
%
\\
\noalign{\medskip}
\label{LT}
l_{[L,M,T]}&:=L+T\cdot\sup_{|p|\leqslant M}|\nabla H(p)|,
\end{align}
$\alpha$ being the constant appearing in {\bf (H1)}
and $\omega_N$ denoting the Lebesgue measure of the unit ball of $\mathbb{R}^N$, and
%
%
\begin{equation}
\label{Lower-est-H}
\mathcal{H}_{\epsilon}\Big(S_T(\mathcal C_{[L,M]})+T\cdot H(0)\ \big|\ \mathbf{W}^{1,1}(\mathbb{R}^N)\Big)\geqslant \Gamma^-_{[L,N,T]}\cdot\frac{1}{\varepsilon^N}
\end{equation} 
with 
\begin{equation}
\label{G-def}
\Gamma^-_{[L,N,T]}:=\frac{1}{8\cdot \ln 2}\Bigg(\frac{\omega_{N}}{192\, (N+1)\cdot\|D^2H(0)\|\cdot T} \Bigg)^{\!\!\!N}
\cdot\bigg(\frac{L}{4}\bigg)^{\!\!\!N(N+1)}.
\end{equation}
%
\end{theorem}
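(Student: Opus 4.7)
I would prove the two bounds by very different methods: the upper estimate by a discretization/enumeration argument that exploits the semiconcavity of $S_T u_0$, and the lower estimate by an explicit packing construction based on the Hopf--Lax formula together with the quadratic expansion of $H$ around the origin provided by~(H2).

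\textbf{Upper bound.} For each $u_0\in\mathcal{C}_{[L,M]}$, the function $v:=S_T u_0+T\cdot H(0)$ is $M$-Lipschitz, vanishes outside $[-l_{[L,M,T]},l_{[L,M,T]}]^N$ by finite propagation speed, and is semiconcave with constant $1/(\alpha T)$. Writing $\phi(x):=\tfrac{|x|^2}{2\alpha T}-v(x)$, which is convex, the gradient $\nabla v=\tfrac{x}{\alpha T}-\nabla\phi$ is a BV vector field whose total variation on $[-l,l]^N$ is bounded in terms of $N,M,l,T,\alpha$ (via the divergence theorem applied to $\Delta\phi$ together with the $L^\infty$ bound on $\nabla\phi$). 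I would then partition $[-l,l]^N$ into cubes of side $\eta\sim\varepsilon$ and, on each cube, approximate $v$ by an affine map whose value at the center is chosen in a $\delta_1$-net of $[-Ml,Ml]$ and whose slope is chosen in a $\delta_2$-net of $\overline{B(0,M)}\subset\R^N$. The $W^{1,1}$ error splits into (a) a pointwise bound from the semiconcavity modulus (for the $L^1$ part) and (b) a cube-by-cube Poincar\'e inequality for BV functions, summed against the total variation of $\nabla v$ (for the gradient part). Balancing $\eta,\delta_1,\delta_2$ so that each contribution is of order $\varepsilon$ and counting the resulting discrete configurations yields an $\varepsilon$-cover of cardinality $\leq 2^{C/\varepsilon^N}$; the remaining work is tracking the dependence of $C$ on the parameters to obtain the explicit form~\eqref{G+def}.

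\textbf{Lower bound.} I would build a Hamming packing of initial data whose images under $S_T$ stay well separated in $W^{1,1}$. Fix a smooth reference bump $\psi$ supported in the unit cube of $\R^N$, place scaled copies $a\,\psi((x-x_k)/\eta)$ at the centers $x_k$ of a grid of $(L/\eta)^N$ cubes of side $\eta$ inside $[-L,L]^N$, and for $\beta\in\{0,1\}^{(L/\eta)^N}$ set $u_\beta(x):=a\sum_k\beta_k\,\psi((x-x_k)/\eta)$. For $a,\eta$ small enough, $\nabla u_\beta$ stays near $0$, where by~(H2) one has $H(p)-H(0)\approx\tfrac12\langle D^2H(0)\,p,p\rangle$; the Hopf--Lax representation of $S_T u_\beta$ thus reduces, up to controlled errors, to inf-convolution with a quadratic form, which is \emph{local} on scales larger than the characteristic spreading $\sim Ta/\eta$. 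Choosing $a\sim\eta^2/(\|D^2H(0)\|\,T)$ both keeps $u_\beta\in\mathcal{C}_{[L,M]}$ and yields a bump-by-bump structure in $S_Tu_\beta$: flipping a single coordinate $\beta_k$ changes the $W^{1,1}$-norm on the corresponding cube by at least $c\,\eta^{N+1}/(\|D^2H(0)\|T)$. A Gilbert--Varshamov subcode of $\{0,1\}^{(L/\eta)^N}$ with pairwise Hamming distance of order $(L/\eta)^N$ then supplies $2^{c'(L/\eta)^N}$ initial data whose images are pairwise $2\varepsilon$-separated as soon as $\eta$ is fixed proportional to $\|D^2H(0)\|T\varepsilon/L^N$; substituting gives~\eqref{Lower-est-H}.

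\textbf{Main obstacle.} The principal technical difficulty is the per-bump lower bound $\|S_Tu_\beta-S_Tu_{\beta'}\|_{W^{1,1}}\gtrsim \eta^{N+1}/(\|D^2H(0)\|T)$ when $\beta,\beta'$ differ in exactly one coordinate: the Hopf--Lax minimum couples the bumps nonlinearly, so one must show that in the chosen smallness regime the minimizing $y$ at each $x$ depends on at most one bump (so that bumps effectively propagate independently), and then carry out a perturbation analysis around the quadratic model to quantify the resulting local separation. This is also what produces the exponent $N(N+1)$ on $L$ in~\eqref{G-def}: it arises from the $\eta^{N+1}$ per-bump separation multiplied by the $(L/\eta)^N$ codeword coordinates, once $\eta$ is eliminated in favor of $\varepsilon$ and $L$.
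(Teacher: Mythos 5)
Your upper-bound plan follows the same route as the paper's --- reduce to a covering problem on a fixed cube, exploit the semiconcavity of $S_Tu_0$, and use Poincar\'e to pass from gradients to $\mathbf{W}^{1,1}$ --- but the one step you compress into ``counting the resulting discrete configurations yields $\leqslant 2^{C/\varepsilon^N}$'' is precisely where the essential combinatorics live, and a naive count does not close. If on each of the $\sim(l/\eta)^N$ cubes you pick the slope from a $\delta$-net of $\overline{B(0,M)}$, making the $\mathbf{L}^1$ error of the gradient $\leqslant\varepsilon$ forces $\eta\sim\varepsilon$ and $\delta\sim\varepsilon$, and the unconstrained count $(M/\delta)^{N(l/\eta)^N}$ has logarithm $\sim\varepsilon^{-N}\log(1/\varepsilon)$ --- a $\log(1/\varepsilon)$ too large. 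The paper removes this factor by using Proposition~\ref{prop2}: $x\mapsto D^+u(x)-Kx$ is a \emph{monotone decreasing multifunction}, so in Proposition~\ref{Entropy1} the piecewise-constant approximants have, along each of the $n^{N-1}$ lattice columns, values forming a decreasing $n$-tuple, of which there are at most $\binom{2n}{n}\leqslant 2^{2n}$; this gives $2^{2Nn^N}$ overall with no logarithmic loss. Your sketch needs this monotonicity-based pruning of the configuration space, not merely the BV bound on $\nabla v$.

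Your lower-bound strategy is genuinely different from the paper's, and the difference is where your gap sits. The paper never propagates the bumps forward: it constructs $\pm$-bump superpositions $\mathcal U_n\subset\mathcal{SC}_{[L/2,m,K]}$ at the \emph{final} time and proves separately (Proposition~\ref{control-part}) that $\mathcal{SC}_{[L/2,m,K]}\subset S_T(\mathcal C_{[L,M]})+T\cdot H(0)$ by running time backwards; Lemma~\ref{le:semiconvexity} shows that a Hopf--Lax flow issued from a mildly semiconvex datum stays $C^1$ on $[0,T]$, so the reversed evolution is a genuine classical (hence viscosity) solution. The packing estimate is then elementary, since no minimization ever couples the bumps. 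You instead place the bumps at time $0$ and assert that the $S_T$-images remain bump-by-bump separated. You correctly identify the per-flip lower bound $\gtrsim\eta^{N+1}/(\|D^2H(0)\|T)$ as the crux, but this is exactly what is left unproved, and it is not cosmetic: (i) the infimal convolution in~\eqref{Hopf} does couple cells, so showing that the minimizer $y_x$ ``sees'' only one bump requires a quantitative decoupling lemma; (ii) with $u_\beta=a\sum_k\beta_k\psi$ and $\psi\geqslant 0$, the $\min$ in~\eqref{Hopf} tends to choose $y$ in the flat region between bumps and may erase a bump altogether, so you would at least need bumps of the opposite sign (or $\pm$ bumps as in the paper) together with a persistence argument. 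Your scaling bookkeeping, with $a\sim\eta^2/(\|D^2H(0)\|T)$ and $\eta\sim\|D^2H(0)\|T\varepsilon/L^N$, does reproduce the exponent $N(N+1)$ on $L$, and Gilbert--Varshamov vs.\ Hoeffding is immaterial; but until the per-flip separation of $S_Tu_\beta$ and $S_Tu_{\beta'}$ in $\mathbf{W}^{1,1}$ is actually established, the lower bound is not proved. The paper's backward route pays for avoiding this with the semiconvexity-propagation lemma, which is the real work on that side.
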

\medskip
In the one dimensional case ($N=1$) the above estimates can be easily obtained 
recalling the well-known fact (e.g. see~\cite{KR}) that $u(t,x)$ is a viscosity solution of~\eqref{HJ} if and only if its space derivative
$v(t,x):=u_x(t,x)$ is an entropy weak solution of the conservation law 
\begin{equation}
\label{ConLaw}
v_t +H(v)_x=0,
\end{equation}
and relying on the same type of estimates established in~\cite{AON1,DLG}
for scalar conservation laws.
In fact, denoting with ${\widetilde S}_t$  the semigroup map generated by~\eqref{ConLaw},
observe that any $\varepsilon$-cover  in $\mathbf{W}^{1,1}$ for a translated set $S_t(\mathcal{C})+ t\cdot H(0)$
of solutions  to~\eqref{HJ} at time $t$, with initial data in  $\mathcal{C}$, provides also an $\varepsilon$-cover  in~$\mathbf{L}^1$
for the  set ${\widetilde S}_t(\mathcal{C}')$ of solutions to~\eqref{ConLaw} at time $t$,
with initial data in  $\mathcal{C}':=\{u'\,|\, u\in\mathcal{C}\}$.
Thus, applying~\cite[Thorem 1.3]{AON1}
one derives the lower bound 
$\mathcal{H}_{\epsilon}(S_t(\mathcal{C})+ t\cdot H(0)\, | \, \mathbf{W}^{1,1})\geqslant \mathcal{H}_{\epsilon}(S_t(\mathcal{C}')\, | \, \mathbf{L}^1)
\underset{\approx}>$
 $\frac{L^2}{|H''(0)| \cdot t}\cdot\frac{1}{\varepsilon}$, which is of the same size as the one provided by 
 $\Gamma^-_{[L,1,t]}\cdot\frac{1}{\varepsilon}$
in~\eqref{Lower-est-H}.
On the other hand, invoking a Poincar\'e inequality, one can easily adapt the construction performed in~\cite{DLG} 
of an $\varepsilon$-cover  in $\mathbf{L}^1$ of ${\widetilde S}_t(\mathcal{C}')$
to produce
an $\varepsilon$-cover  in $\mathbf{W}^{1,1}$ of $S_t(\mathcal{C})+ t\cdot H(0)$ with the same number of elements.
As a consequence, we derive an upper bound on $\mathcal{H}_{\epsilon}(S_t(\mathcal{C})+ t\cdot H(0)\, | \, \mathbf{W}^{1,1})$
of the same order as the one established in~\cite[Thorem 2.2]{DLG} (cfr.~also~\cite[Remark 1.4]{AON1})
which, in turn, is of the same size as the one provided by 
 $\Gamma^+_{[L,M,1,t]}\cdot\frac{1}{\varepsilon}$
in~\eqref{Upper-est-H}.

When the space dimension is greater than one we can no more rely on
the equivalence between the theory of Hamilton-Jacobi equations and that of hyperbolic conservation laws.
Indeed,
 in this case, the gradient of a viscosity solution turns out to be 
(at least formally) a solution of a non-strictly hyperbolic system in several space variables,  while the available compactness estimates for systems  of conservation laws
concern only the
class of strictly hyperbolic systems in one space variable~\cite{AON2, AON3}.
Neverthless, we shall implement some of the ideas originated in the works~\cite{AON1,DLG}
to prove Theorem~\ref{upper-lower-estimate}. 
However, in order to handle the higher dimensional case, one needs  new ideas  which exploit  specific properties of the viscosity solutions of~\eqref{HJ}
as well as the geometrical theory of monotone functions of several variables.

Towards the derivation of the upper bound stated in (i), we observe that 
for any given viscosity solution $u(t,x)$, 
letting $D^+_x u$ denote a generalized space gradient of $u$ (cfr. Definition~\ref{general-grad}),
the semiconcavity property  of $u$ ensures that the map $x\mapsto  D^+_x u(t,x)-\frac{x}{\alpha\,t}$ is a monotone 
decreasing multifunction on $\mathbb{R}^N$. Next, relying on  a Poincar\'e inequality,
we  provide an upper bound on the $\varepsilon$-entropy  in $\mathbf{L}^1$ for a class of 
monotone decreasing multifunctions with uniformly bounded total variation, defined on
a bounded domain of $\mathbb{R}^N$. In turn, such a  bound  yields
 estimate~\eqref{Upper-est-H} on the $\varepsilon$-entropy  in $\mathbf{W}^{1,1}$
of $S_T(\mathcal{C}_{[L,M]})+T\cdot H(0)$,  again by Poincar\'e's inequality.

The lower bounds on $H_{\varepsilon}(S_T(\mathcal{C}_{[L,M]})+T\cdot H(0))$ are obtained in two steps
adopting a similar strategy as the one pursued in~\cite{AON1}. 
\begin{itemize}
\item[1.] 
We consider
a class $\mathcal{SC}_{[K]}$ of semiconcave functions with 
semiconcavity constant $K$,
defined on a bounded domain,
and we establish a controllability type result for the elements of such a class,
up to a translation by a fixed map. 
Namely, employing the Hopf-Lax formula for the viscosity
solutions to~\eqref{HJ}
we prove that, at any given time $T>0$,  every element of $\mathcal{SC}_{[K]}-T\cdot H(0)$
can be obtained
as the value $u(T,\cdot)$ of a classical solution of~\eqref{HJ}, with initial data in 
$\mathcal{C}_{[L,M]}$, provided that the semiconcavity constant $K$
is sufficient small. Since a classical solution must coincide with the unique viscosity solution of the
corresponding Cauchy problem, this proves that $\mathcal{SC}_{[K]} -T\cdot H(0)\subset S_T(\mathcal{C}_{[L,M]})$.
\item[2.] We introduce a one-parameter class of semiconcave functions $\mathcal{U}_n\subset \mathcal{SC}_{[K]}$
defined as combinations of suitable
bump functions and, by a combinatorial argument, we provide an optimal estimate (w.r.t.  parameter $n$)
of the maximum number of functions in $\mathcal{U}_n$ at
distance $\leq \varepsilon$ w.r.t. the $\mathbf{W}^{1,1}$-metric. This estimate yields a lower bound
on the $\varepsilon$-entropy of $\mathcal{U}_n$, from which we recover~\eqref{Lower-est-H}
relying on the result of point 1.
\end{itemize}

The paper is organized as follows. In Section~\ref{sec:prelim}, we collect  preliminary results and definitions
concerning semiconcave functions and  Hamilton-Jacobi equations. 
In Section~\ref{sec:up-ext}, after deriving  further properties of the viscosity solutions of Hamilton-Jacobi equations,
we provide an upper bound on the $\varepsilon$-entropy in $\mathbf{L}^1$ 
for a class of monotone multifunctions. Relying on this result,
we next establish an upper bound on the $\varepsilon$-entropy in $\mathbf{W}^{1,1}$ for a class of semiconcave functions, which yields
the upper bound stated in Theorem~\ref{upper-lower-estimate}-$(i)$. 
In Section~\ref{sec:low-est}, we carry out the analysis described in the above two steps to obtain 
the lower bound stated in Theorem~\ref{upper-lower-estimate}-$(ii)$.
\medskip
\section{Notation and preliminaries}
\label{sec:prelim}

Let $N\geqslant 1$ be an integer. Throughout the paper we shall denote by:
\begin{itemize}
\item $|\cdot|$ the Euclidean norm in $\R^N$,
\item $\langle\cdot,\cdot\rangle$ the Euclidean inner product in $\R^N$,  
\item $[x,y]$ the segment joining two points $x,y\in \mathbb{R}^N$,
\item $B(x_0,r)$  the  open ball of $\R^N$ with radius $r >0$ and centered at $x_0$,
\item $\card (S)$ the number of elements of any finite set $S$,
\item $\mathrm{Vol}(D)$ the Lebesgue measure of a measurable set $D\subset \R^N$,
\item $\omega_N :=\mathrm{Vol}(B(0,1))=\frac{\pi^{N/2}}{\Gamma(N/2+1)}$  the Lebesgue measure of the unit ball of $\mathbb{R}^N$,
\item $\|A\|$ the usual operator norm of the $N\times N$ matrix $A$,
\item $\Lip (\R^N)$ the space of all Lipschitz continuous functions $f:\R^N\to \R$, and by $\Lip[f]$ the Lipschitz seminorm of $f$,
\item $\mathrm{supp}(u)$ the support of $u\in\Lip (\R^N)$, that is, the closure of $\big\{x\in\R^N~|~u(x)\neq 0\big\}$,
\item $\mathbf{L}^{1}(D)$, with $D\subset\R^N$  a  measurable set, the Lebesgue space of all (equivalence classes of) summable functions on $D$, equipped with the usual norm $\|\cdot\|_{\mathbf{L}^{1}(D)}$, 
\item $\mathbf{L}^{\infty}(D)$, with $D\subset\R^N$  a  measurable set,  the space of all essentially bounded functions on~$D$,  and by $\|u\|_{\mathbf{L}^{\infty}(D)}$ the essential supremum of a function $u\in \mathbf{L}^{\infty}(D)$ (we shall use the same symbol in case $u$ is vector-valued), 
\item $\mathbf{W}^{1,1}\big(\Omega)$, with $\Omega$ a convex domain in $\R^N$, 
the  Sobolev space of functions with summable first order distributional derivatives, and by $\|\cdot\|_{\mathbf{W}^{1,1}(\Omega)}$ its norm,
\item $\mathbf{W}^{1,1}_0\big(\Omega)$, with $\Omega$ a convex domain in $\R^N$, 
the  Sobolev space of functions $F\in \mathbf{W}^{1,1}\big(\Omega)$ with zero trace on the boundary $\partial \Omega$,
\item $BV(\Omega,\R^N)$, with $\Omega$ a domain in $\R^N$, the space of all vector-valued functions $F:\Omega\to\R^N$ of bounded variation (that is, all $F\in \mathbf{L}^1(\Omega,\R^N)$ such that the first partial derivatives of $F$ in the sense of distributions are  measures with finite total variation in $\Omega$).
\end{itemize}
Moreover $\lfloor a\rfloor:=\max\{z\in \mathbb{Z}\, | x\leq a\}$  denotes the integer part of $a$.
\medskip

\subsection{Semiconcave  and monotone functions  in~$\R^N$ and Poincar\'e inequalities}
\label{subsec:semiconc-monotone}
We collect here some basic definitions and properties of semiconcave and monotone functions in~$\R^N$ that will be used 
in the paper. We refer to~\cite{CS} and ~\cite{AA} for a general introduction to the respective theories.
\begin{definition}\label{SCL} 
A continuous function $u:\Omega\rightarrow \mathbb{R}$, with $\Omega\subset\mathbb{R}^N$,
 is called {\em semiconcave} if there exists $K>0$ such that 
\begin{equation}\label{eq:semiconcave}
u(x+h)+u(x-h)-2u(x)\leqslant K|h|^2,
\end{equation}
for all $x,h\in\R^N$ such that $[x-h,x+h]\subset \Omega$. 
When this property
holds true, we also say that $u$ is {\em semiconcave in $\Omega$ with constant}  $K$, and call $K$ a {\em semiconcavity constant} for~$u$.  
\begin{itemize}
\item[-]We say that $u$ is {\em semiconvex}  (with constant $-K$) if $-u$ is semiconcave (with constant $K$).
\item[-]We say that $u:\Omega\rightarrow \mathbb{R}$, with $\Omega\subset\mathbb{R}^N$ open, 
is {\em locally semiconcave} (or locally semiconvex) if 
$u$ is semiconcave (semiconvex) in every compact set $A \subset\subset\Omega$.
\end{itemize}
\end{definition}
\begin{remark}\rm
\label{semiconc-prop}
The notion of semiconcavity introduced here is the most commonly used in the literature.
A more general definition of semiconcavity can be found in~\cite{CS}.
 It is easy to see that a function $u$  is semiconcave in $\Omega$ with  constant $K$ if any only if the function 
 $$\widetilde{u}(x)=u(x)-\frac{K}{2}|x|^2\qquad(x\in\Omega)$$ 
 is concave. Moreover, any continuously differentiable map $u:\Omega\rightarrow \mathbb{R}$
 that has a Lipschitz continuous gradient $\nabla u$
 with Lipschitz constant $K$ is semiconcave with constant $2K$.
 \end{remark}
 Semiconcave functions share some  well-know properties of concave
 functions (see \cite[Theorem 2.1.7, Theorem 2.3.1]{CS} and \cite[Proposition~5711]{AA}) stated in the following
\begin{theorem}
\label{pro-semiconc}
 Let $\Omega\subseteq\mathbb{R}^N$ open
 and $u:\Omega\rightarrow \mathbb{R}$ be locally semiconcave. Then, the following properties hold true: 
\begin{itemize}
\item [(i)] u is locally Lipschitz continuous.
\item [(ii)] (Alexandroff's Theorem) u is  almost everywhere twice differentiable.
\item[(iii)] The gradient of $u$, defined almost everywhere in $\Omega$,  belongs to $BV_{loc}(\Omega,\mathbb{R}^N)$. 
Moreover, if $u$ is semiconcave in $\Omega$ with constant  $K$, then 
\begin{equation}
D^2 u \leqslant K\cdot \mathbb{I}_{N}\mathcal{L}^N
\end{equation}
in the sense of symmetric matrix-valued measures.
\end{itemize}
\end{theorem}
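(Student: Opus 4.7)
The plan is to reduce everything to the corresponding classical statements for concave functions via the elementary identity recalled in Remark~\ref{semiconc-prop}. Indeed, if $u$ is semiconcave on $\Omega$ with constant $K$, then
\[
\widetilde{u}(x):=u(x)-\tfrac{K}{2}|x|^2
\]
is concave on every convex subset of $\Omega$. Since the map $x\mapsto\tfrac{K}{2}|x|^2$ is of class $C^\infty$, any property that is stable under addition of a smooth map (local Lipschitz continuity, a.e.\ second differentiability, local $BV$ regularity of the gradient) will transfer directly from $\widetilde{u}$ to $u$. In the locally semiconcave case the argument is then carried out on a family of balls $B\subset\subset\Omega$, each equipped with its own constant $K=K(B)$, and the resulting properties glue together because they are local.

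For (i), I would invoke the classical fact that every finite concave function on an open convex subset of $\R^N$ is locally Lipschitz continuous; applied to $\widetilde u$ on an arbitrary ball $B\subset\subset\Omega$, this yields the local Lipschitz continuity of $u=\widetilde u+\tfrac{K}{2}|x|^2$ on~$B$. For (ii), I would quote the standard Alexandroff theorem for concave functions on $\R^N$, which gives almost everywhere existence of a second-order Taylor expansion for $\widetilde u$ with a symmetric Hessian; adding the smooth quadratic term preserves this, so $u$ is twice differentiable a.e.\ in~$\Omega$.

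For (iii), I would use that the distributional Hessian of a concave function on an open convex set is a symmetric matrix of Radon measures which is negative semidefinite, i.e.\ $D^2\widetilde u\leqslant 0$ in the sense of matrix-valued measures; in particular $\nabla\widetilde u\in BV_{loc}$. This is the usual consequence of monotonicity of directional derivatives of concave functions and of the Riesz representation theorem applied coordinate-wise. Taking the Hessian of the identity $u=\widetilde u+\tfrac{K}{2}|x|^2$ in the sense of distributions, one has
\[
D^2 u=D^2\widetilde u+K\,\mathbb{I}_N\,\mathcal{L}^N,
\]
so $\nabla u\in BV_{loc}(\Omega,\R^N)$ and $D^2 u\leqslant K\,\mathbb{I}_N\,\mathcal{L}^N$ as symmetric matrix-valued Radon measures, which is precisely the claim. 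In the merely locally semiconcave case one applies the same identity on each relatively compact convex subdomain and passes to the union.

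The only delicate point is the second-order statement for concave functions: Alexandroff's theorem and the $BV$ regularity of the gradient are non-trivial classical results that I would cite from \cite{CS} and \cite{AA} rather than reprove. Beyond that, the proof amounts to a straightforward reduction through the quadratic translation $u\mapsto u-\tfrac{K}{2}|x|^2$, so no genuinely new computation is required; the task is chiefly to set up the concave function $\widetilde u$ correctly on suitable convex subdomains and transport each property back to $u$.
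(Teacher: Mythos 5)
The paper does not actually prove Theorem~\ref{pro-semiconc}: it is stated as a collection of known results, with the proof delegated to the cited references (\cite[Theorem 2.1.7, Theorem 2.3.1]{CS} and \cite[Proposition 5.11]{AA}). Your reduction through $\widetilde u = u - \tfrac{K}{2}|x|^2$ to the classical theory of concave functions is exactly the standard argument those references use, and it is carried out correctly, including the correct localization to convex subdomains $B\subset\subset\Omega$ where $\widetilde u$ is genuinely concave, so your proposal is consistent with the paper's treatment.
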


We shall adopt the notation $Du$
for the distributional gradient of a semiconcave function $u$.
A notion of generalized gradient that is specially fit to viscosity solutions is recalled in the following
\begin{definition}
\label{general-grad}
Let $u:\Omega\rightarrow\mathbb{R}$, with $\Omega\subseteq\mathbb{R}^N$ open. For every $x\in\Omega$, the sets
\begin{equation}
\label{sup-sub-diff}
\begin{aligned}
D^+u(x)
&:=\bigg\lbrace{p\in\mathbb{R}^N\ |\ \limsup_{y\rightarrow x}\frac{u(y)-u(x)-\langle p,y-x\rangle}{|y-x|}\leqslant 0 \bigg\rbrace},
\\
\noalign{\medskip}
D^-u(x)
&:=\bigg\lbrace{p\in\mathbb{R}^N\ |\ \liminf_{y\rightarrow x}\frac{u(y)-u(x)-\langle p,y-x\rangle}{|y-x|}\geqslant 0 \bigg\rbrace},
\end{aligned}
\end{equation}
are called, respectively, the \textit{superdifferential} 
and the \textit{subdifferential} 
of $u$ at $x$. Moreover, 
\begin{equation}
\label{reach-grad}
D^*u(x):=\Big\lbrace{p=\lim_{k\rightarrow\infty}\nabla u(x_k)\ |\ f\ \mathrm{is\ differentiable\ at}\ x_k\ \mathrm{and}\ x_k\rightarrow x\Big\rbrace},
\end{equation}
is called the set of reachable gradients of $u$ at $x$.
\end{definition}
\noindent
From  definition~\eqref{sup-sub-diff} it follows that there holds
\begin{equation}
\label{sup-sub-diff-equal}
D^-u(x)=-D^+(-u)(x)\qquad\quad\forall~x\in\Omega.
\end{equation}
The superdifferential of a semiconcave function enjoys the properties stated in the following (see~\cite[Proposition~3.3.4, Theorem~3.3.6]{CS})
\begin{theorem}
\label{Pro-Semi}
 Let $\Omega\subseteq\mathbb{R}^N$ open
 and $u:\Omega\rightarrow \mathbb{R}$ be locally semiconcave. Then, the following properties hold true.
\begin{enumerate}
\item [(i)] The superdifferential $D^{+}u(x)$ is a compact, convex,
nonempty set for  all $x\in\Omega$.
\item[(ii)] $D^{+}u$ is an upper semicontinuous set-valued map, that is, if $\{x_k\}$ is a sequence in $\Omega$
converging to $x$, and if $p_k\in D^+ u(x_k)$ converges to a vector $p\in\R^N$, then $p\in D^+u(x)$.
\item [(iii)] $D^{+}u(x)=\mathrm{co}\,D^{*}u(x)$ for  all $x\in\Omega$, where {\em co} stands for the convex hull.
\item [(iv)] $D^{+}u(x)$ is a singleton if and only if $u$ is differentiable at $x$.
\item[(v)] If $D^{+}u(x)$ is a singleton for every $x\in\Omega$, 
then $u\in C^{1}(\Omega,\mathbb{R})$. 
\end{enumerate}
\end{theorem}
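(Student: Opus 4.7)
The plan is to reduce each of (i)--(v) to the corresponding property for a concave function, using the characterization recalled in Remark~\ref{semiconc-prop}. Fix $x_0 \in \Omega$, choose a ball $B(x_0, r) \subset\subset \Omega$ on which $u$ admits a semiconcavity constant $K$, and set $\widetilde{u}(x) := u(x) - \tfrac{K}{2}|x|^2$, which is concave on $B(x_0, r)$. Directly from the definitions~\eqref{sup-sub-diff} and~\eqref{reach-grad} one checks
\begin{equation*}
D^+ u(x) = D^+ \widetilde{u}(x) + Kx, \qquad D^* u(x) = D^* \widetilde{u}(x) + Kx,
\end{equation*}
and the convex-hull operation commutes with the affine translation $p \mapsto p + Kx$. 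Hence (i)--(v) at $x_0$ follow from the analogous statements for $\widetilde{u}$, and I may assume throughout that $u$ itself is concave on a neighbourhood of $x$.

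Under concavity, (i), (ii), (iv), (v) are standard consequences of convex analysis. Existence of some $p \in D^+u(x)$ comes from Hahn-Banach applied to the hypograph of $u$ at $(x,u(x))$, and the supporting-hyperplane description
\begin{equation*}
D^+u(x) = \bigl\{ p \in \R^N : u(y) \le u(x) + \langle p, y-x\rangle \ \text{for all } y \in B(x_0, r) \bigr\}
\end{equation*}
exhibits $D^+u(x)$ as an intersection of closed half-spaces, yielding closedness and convexity; boundedness, hence compactness, comes from the local Lipschitz constant of $u$ supplied by Theorem~\ref{pro-semiconc}(i). For (ii), passing to the limit in the displayed inequality along $x_k \to x$, $p_k \to p$ gives $p \in D^+u(x)$. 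Property (iv) is obtained by combining the support inequality with its reverse, which follows from the local Lipschitz control on $u(x+h) - u(x) - \langle p,h\rangle$, pinching the difference quotient at $x$. Finally, (v) is immediate from (ii) and (iv): when $D^+u$ is single-valued everywhere, upper semicontinuity together with the local boundedness of the selection forces continuity of $\nabla u$.

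The main obstacle is the nontrivial inclusion $D^+u(x) \subseteq \mathrm{co}\, D^*u(x)$ in (iii); the reverse inclusion follows from (ii) and the convexity of $D^+u(x)$. My strategy would be to show that every extreme point of $D^+u(x)$ lies in $D^*u(x)$, whereupon Carath\'eodory's theorem (applicable since $D^+u(x) \subset \R^N$ is compact convex) yields the conclusion. By Straszewicz's theorem the exposed points are dense in the extreme points, so it suffices to treat an exposed point $p$: there is $\xi \in \R^N$ such that $p$ is the unique minimizer of $q \mapsto \langle q, \xi\rangle$ over $D^+u(x)$. The one-variable concave function $\phi(t) := u(x + t\xi)$ has $\phi'(0^+) = \langle p, \xi\rangle$ by the duality between the right derivative of a concave function and its superdifferential, and $\phi'(t_k) \to \phi'(0^+)$ along any sequence $t_k \downarrow 0$ of differentiability points of $\phi$, which exist by Alexandroff's theorem (Theorem~\ref{pro-semiconc}(ii)). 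Local Lipschitz continuity makes $\nabla u(x + t_k\xi)$ bounded, so a subsequence converges to some $q_* \in D^*u(x)$ with $\langle q_*, \xi\rangle = \langle p, \xi\rangle$; uniqueness of the minimizer forces $q_* = p$, whence $p \in D^*u(x)$. The genuine difficulty compared with the one-dimensional case, where left/right derivatives settle everything, is precisely that one must combine Alexandroff's a.e.~twice differentiability with convex separation in $\R^N$, and this is where the multidimensional geometry enters.
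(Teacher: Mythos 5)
The paper does not prove Theorem~\ref{Pro-Semi} itself; it cites \cite[Proposition~3.3.4, Theorem~3.3.6]{CS}, so there is no internal proof to compare with. Your reduction to the concave case via $\widetilde u = u - \tfrac{K}{2}|\cdot|^2$ and the translation formulas for $D^+,D^*$ are correct, and the sketches of (i), (ii), (iv), (v) are essentially the standard convex-analytic arguments. (Two small slips: Carath\'eodory is the wrong name --- what you need is Minkowski's theorem that a compact convex subset of $\R^N$ is the convex hull of its extreme points; and for (iv) the reverse inequality really comes from the singleton hypothesis together with monotonicity and Dini's theorem, not from Lipschitz control alone.)

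There is, however, a genuine gap in (iii). You choose $t_k \downarrow 0$ to be differentiability points of the \emph{one-variable} restriction $\phi(t)=u(x+t\xi)$ and then speak of $\nabla u(x+t_k\xi)$ as if it existed. Differentiability of $\phi$ at $t_k$ says nothing about differentiability of $u$ at $x+t_k\xi$: the non-differentiability set of $u$ is Lebesgue-null in $\R^N$, but a ray is itself Lebesgue-null for $N\geqslant 2$, so the ray $\{x+t\xi : t>0\}$ may consist entirely of points where $u$ fails to be differentiable. In that case you have no gradients to extract, and $\phi'(t_k)$ is not equal to $\langle\nabla u(x+t_k\xi),\xi\rangle$. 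This is precisely the multidimensional obstruction you flagged, and your argument does not get around it. The fix is to abandon the exact ray: pick any differentiability points $y_k\to x$ of $u$ with $(y_k-x)/|y_k-x|\to \xi/|\xi|$ (these exist by density of the differentiability set), and pass to a subsequence with $\nabla u(y_k)\to q_*\in D^*u(x)$. Concavity gives $u(x)\leqslant u(y_k)+\langle\nabla u(y_k),x-y_k\rangle$ and $u(y_k)\leqslant u(x)+\langle p,y_k-x\rangle$, so $\langle\nabla u(y_k)-p,\,y_k-x\rangle\leqslant 0$; dividing by $|y_k-x|$ and letting $k\to\infty$ yields $\langle q_*,\xi\rangle\leqslant\langle p,\xi\rangle$, while $q_*\in D^+u(x)$ by (ii) and $p$ minimizes $\langle\cdot,\xi\rangle$ there. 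Uniqueness of the exposing minimizer then forces $q_*=p$, and the rest of your argument goes through.
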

\begin{remark}\rm
\label{semiconc-semiconc}
Relying on the properties of the generalized gradients one can show that if a 
function $u:\Omega\rightarrow\mathbb{R}$ ( $\Omega\subseteq\mathbb{R}^N$ open and convex)
is both semiconcave and semiconvex in $\Omega$
then $u\in C^{1,1}(\Omega,\mathbb{R})$ (see~\cite[Corollary 3.3.8]{CS}).
\end{remark}
In dealing with the map $x \mapsto D^{+}u(x)$ it will be useful to recall
the following notions for set-valued maps.
\begin{definition}
Let $F:\mathbb{R}^N\to2^{\mathbb{R}^N}$ be a multifunction, that is a map that associates with every point $x\in \R^N$
some set $F(x)\subset \R^N$. We say that $F$ is monotone decreasing if 
\begin{equation}
\label{monotone-decr}
\langle v_2-v_1,x_2-x_1\rangle\leqslant 0,\quad\forall x_i\in\mathbb{R}^N, v_i\in F(x_i),i=1,2. 
\end{equation}
%
%
The set 
\begin{equation*}
\text{dom}(F):=\big\lbrace{x\in\mathbb{R}^N~|~F(x)\neq \varnothing\big\rbrace}
\end{equation*}
 is called  the {\em domain} of $F$. 
We say that $F$ is {\em univalued} on some set $A$ if $F(x)$ consists of at most one point for every $x\in A$.
\end{definition}
%
%

As observed in~\cite{AA} (see Corollary~1.3(3) and Remark 2.3),
any monotone decreasing multifunction $F$ is bounded and almost everywhere univalued in
every open set $\Omega\subset\mathbb{R}^N$, which is relatively compact in the interior of dom$(F)$.
Therefore, we may regard the restriction of $F$
to any such open set $\Omega$ as an element of~$\mathbf{L}^{\infty}(\Omega,\mathbb{R}^N)$.
Actually,  in \cite[Proposition~5.1]{AA},  $F$  is shown to be a function of bounded variation on $\Omega$ 
and the following upper bound on the total variation
of its distributional derivative is provided.
\begin{proposition}\label{BV-bound}
Let $F:\mathbb{R}^N\to2^{\mathbb{R}^N}$ be a monotone decreasing multifunction
and $\Omega\subset\mathbb{R}^N$ be an open set, relatively compact in the interior of dom$(F)$.
Then, the restriction of $F$
to $\Omega$ (viewed as an element of $\mathbf{L}^{\infty}(\Omega,\mathbb{R}^N)$) belongs to $BV(\Omega,\mathbb{R}^N)$. Moreover, setting $F(\Omega):=\cup_{x\in\Omega}F(x)$,
there holds
\begin{equation}
|DF|(\Omega)
\leqslant 2^{\frac{N}{2}} N^2 \omega_N\, \big[\mathrm{diam}(\Omega)+\mathrm{diam}(F(\Omega))
\big]^N
\end{equation}
where $|DF|$ is the total variation of the (matrix-valued) Radon measure $DF$,  and
$$
\mathrm{diam}(A):=\sup\big\{|x_{2}-x_{1}|\ |\ x_{i}\in A\big\}\qquad
(A\subset\R^N)\,.
$$
\end{proposition}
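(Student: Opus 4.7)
The plan is to adapt the strategy of Alberti and Ambrosio (as in [AA, Proposition~5.1]), which controls the BV seminorm of a monotone multifunction through the area formula applied to a suitably perturbed, determinant-positive map. First I would reduce to smooth single-valued $F$: extending $F$ to a maximal monotone multifunction on $\R^N$ and applying the Moreau--Yosida (resolvent) regularization produces a family of smooth monotone decreasing maps $F_\epsilon : \R^N \to \R^N$ with $F_\epsilon \to F$ in $\mathbf{L}^1_{loc}$ and $\mathrm{diam}(F_\epsilon(\Omega)) \leqslant \mathrm{diam}(F(\Omega)) + o(1)$. Lower semicontinuity of the total variation then reduces the task to proving the estimate for smooth $F_\epsilon$, for which $|DF_\epsilon|(\Omega)$ is an ordinary integral of a matrix norm of $DF_\epsilon(x)$.

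Next I would consider the perturbed map $\Phi_\epsilon(x) := x - F_\epsilon(x)$. Monotonicity of $F_\epsilon$ means the symmetric part of $-DF_\epsilon$ is positive semidefinite, hence the symmetric part of $D\Phi_\epsilon = I - DF_\epsilon$ is at least $\mathbb{I}_N$. In particular $\Phi_\epsilon$ is strictly monotone with constant one and $D\Phi_\epsilon$ is invertible everywhere, so $\Phi_\epsilon$ is a global $C^1$ diffeomorphism onto its image; moreover, writing $D\Phi_\epsilon = S + A$ with $S$ symmetric positive definite and $A$ antisymmetric, the factorization $\det(S+A) = \det(S)\det(I + S^{-1}A)$ together with the fact that $S^{-1}A$ has purely imaginary eigenvalues $\pm i\tau_k$ (it is conjugate via $S^{1/2}$ to the antisymmetric matrix $S^{-1/2}AS^{-1/2}$) gives $\det D\Phi_\epsilon = \det(S)\prod(1+\tau_k^2) \geqslant 1$. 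Applying the area formula and the inclusion $\Phi_\epsilon(\Omega) \subset \Omega - F_\epsilon(\Omega)$, I obtain
$$\int_\Omega \det D\Phi_\epsilon(x)\,dx \;=\; \mathrm{Vol}(\Phi_\epsilon(\Omega)) \;\leqslant\; \omega_N \cdot R_\epsilon^N,$$
where $R_\epsilon \leqslant \bigl[\mathrm{diam}(\Omega) + \mathrm{diam}(F_\epsilon(\Omega))\bigr]/\sqrt{2}$ is the radius of a ball containing $\Phi_\epsilon(\Omega)$ (by Jung's theorem, which provides the $2^{N/2}$ contribution to the final constant).

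The decisive step, and the main obstacle, is a pointwise algebraic inequality of the form
$$\|DF_\epsilon(x)\| \;\leqslant\; N^2 \cdot \det\bigl(I - DF_\epsilon(x)\bigr)$$
for the matrix norm used in the statement: if $F_\epsilon$ were the gradient of a concave function, $DF_\epsilon$ would be symmetric negative semidefinite and the bound would reduce to the elementary $\sum \sigma_i \leqslant \prod(1+\sigma_i)$ for eigenvalues $\sigma_i\geqslant 0$ of $-DF_\epsilon$, together with $\|\cdot\| \leqslant N \cdot (\text{trace})$ on positive semidefinite matrices. For a general monotone $F_\epsilon$, however, the antisymmetric part of $DF_\epsilon$ is not controlled by the trace, and one must extract its contribution from the second factor $\prod(1+\tau_k^2)$ of $\det D\Phi_\epsilon$; this is the delicate linear-algebraic argument of [AA, Lemma~5.2] and accounts for the extra factor $N^2$. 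Combining this pointwise bound with the area-formula estimate above, integrating over $\Omega$, and passing to the limit $\epsilon \to 0$ by lower semicontinuity of the total variation yields the stated inequality.
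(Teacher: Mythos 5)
The paper does not actually prove this proposition; it is quoted verbatim with a citation to [AA, Proposition~5.1], so there is no in-paper argument to compare your proposal against. You have therefore tried to reconstruct Alberti--Ambrosio's argument, and the overall strategy you describe --- Yosida regularization down to a smooth monotone field, the perturbation $\Phi_\epsilon = I - F_\epsilon$ with Jacobian determinant $\geqslant 1$, the area formula to convert $\int_\Omega \det D\Phi_\epsilon$ into a volume, and a pointwise linear-algebraic inequality comparing $\|DF_\epsilon\|$ to $\det D\Phi_\epsilon$ --- is the right skeleton and matches the geometric method of [AA].

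There are, however, two concrete problems. First, your claim that Jung's theorem ``provides the $2^{N/2}$ contribution to the final constant'' has the direction backwards. Jung's theorem (or, even more simply, the isodiametric inequality) bounds $\mathrm{Vol}(\Phi_\epsilon(\Omega))$ by $\omega_N\,R_\epsilon^N$ with $R_\epsilon \leqslant D/\sqrt2$ where $D = \mathrm{diam}(\Omega) + \mathrm{diam}(F_\epsilon(\Omega))$; this \emph{multiplies} the bound by $2^{-N/2}$, it does not produce the $2^{N/2}$ in the numerator. If every step of your chain were as stated, you would end up with $N^2\omega_N D^N/2^{N/2}$, a factor $2^N$ \emph{smaller} than the claimed constant --- a signal that either your pointwise inequality $\|DF_\epsilon\|\leqslant N^2\det(I-DF_\epsilon)$ is not the one actually available (the constant depends on the matrix norm underlying $|DF|$, which you never fix), or the volume estimate in [AA] is the cruder $\mathrm{Vol}\leqslant\omega_N D^N$. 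Either way, the arithmetic does not close as written. Second, and more substantially, the entire weight of the proof rests on the pointwise inequality relating $\|DF_\epsilon(x)\|$ to $\det(I - DF_\epsilon(x))$ for general (not gradient) monotone fields, where the antisymmetric part must be extracted from $\prod_k(1+\tau_k^2)$; you name this as ``the delicate linear-algebraic argument of [AA, Lemma~5.2]'' and defer it entirely. Since that lemma \emph{is} the proof, deferring it leaves a genuine gap: your write-up supplies the scaffolding around the hard step but not the hard step itself.
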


We next recall  further  
properties of semiconcave functions and of their superdifferentials 
(see \cite[Theorem 2.1.7, Theorem 2.3.1, 
Proposition~3.3.10]{CS},
\cite[Corollary 1.4]{AA}).
\pagebreak

\begin{proposition}
\label{prop2}
Let $\Omega\subseteq\mathbb{R}^N$ be open convex
 and $u:\Omega\rightarrow \mathbb{R}$ be semiconcave with  constant $K$. 
Then, the following properties hold.
\begin{enumerate}
\item [(i)] For every $x,y\in\Omega$, 
and for any $p_x\in D^+u(x)$, $p_y\in D^+u(y)$, there holds
\[
\big\langle p_y-p_x,y-x\big\rangle\leqslant K\, |y-x|^2.
\]
\item [(ii)] The map $x\mapsto D^+u(x)-K\, x$ is a 
monotone decreasing multifunction.
\end{enumerate}
\end{proposition}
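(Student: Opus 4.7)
The plan is to prove (i) as the essential analytic statement, from which (ii) drops out as a one-line algebraic consequence. For (i), I would exploit the characterization recalled in Remark~\ref{semiconc-prop}: since $\Omega$ is convex, $u$ is semiconcave with constant $K$ if and only if the ``detilted'' function $\widetilde{u}(x):=u(x)-\frac{K}{2}|x|^{2}$ is concave on $\Omega$. Because the quadratic correction is $C^{\infty}$, superdifferentials transform additively, so
\[
D^{+}u(x)=D^{+}\widetilde{u}(x)+Kx\qquad\forall x\in\Omega.
\]
Then I would invoke the classical monotonicity of superdifferentials of concave functions: for any $q_x\in D^{+}\widetilde u(x)$ and $q_y\in D^{+}\widetilde u(y)$ one has $\langle q_y-q_x,y-x\rangle\le 0$. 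Substituting $q_x=p_x-Kx$ and $q_y=p_y-Ky$ and expanding produces the claimed inequality.

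If I wanted a more self-contained route that avoids invoking the concave monotonicity as a black box, I would first establish the global one-sided Taylor inequality
\[
u(y)\le u(x)+\langle p_x,y-x\rangle+\frac{K}{2}|y-x|^{2}\qquad \forall\,y\in\Omega\text{ with }[x,y]\subset\Omega,\; p_x\in D^{+}u(x),
\]
by combining the local definition \eqref{sup-sub-diff} of $D^{+}u(x)$ with the defining inequality \eqref{eq:semiconcave} along the segment $t\mapsto x+t(y-x)$. Writing this estimate once for the pair $(x,y)$ and once with the roles of $x,y$ swapped, then summing the two, cancels the function values and yields $\langle p_y-p_x,y-x\rangle\le K|y-x|^{2}$ directly. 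This settles (i).

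For (ii), set $F(x):=D^{+}u(x)-Kx$. Any $v_i\in F(x_i)$ has the form $v_i=p_i-Kx_i$ with $p_i\in D^{+}u(x_i)$, hence by (i)
\[
\langle v_{2}-v_{1},\,x_{2}-x_{1}\rangle \;=\;\langle p_{2}-p_{1},\,x_{2}-x_{1}\rangle - K|x_{2}-x_{1}|^{2}\;\le\; 0,
\]
which is exactly the definition \eqref{monotone-decr} of monotone decreasing multifunction.

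The only technical point requiring care is the justification of the global one-sided Taylor inequality, since the definition \eqref{sup-sub-diff} of $D^{+}u$ is purely infinitesimal. This is the main (and only) obstacle, but it is mild: once one knows $\widetilde u$ is concave on the convex set $\Omega$, every element of $D^{+}\widetilde u(x)$ provides an affine upper bound for $\widetilde u$ valid globally on $\Omega$, and translating back by $\frac{K}{2}|\cdot|^{2}$ gives the quadratic estimate above. Thus (i) and (ii) follow without any further calculation.
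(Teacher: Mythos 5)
Your proof is correct. The paper itself gives no proof of this proposition---it is presented as a recalled fact with citations to Cannarsa--Sinestrari and Alberti--Ambrosio---so there is nothing to compare against line by line; but the route you take (detilt to the concave function $\widetilde u = u - \tfrac{K}{2}|\cdot|^2$, use the additivity of $D^+$ under a smooth perturbation to get $D^+u(x)=D^+\widetilde u(x)+Kx$, invoke the monotonicity $\langle q_y-q_x,y-x\rangle\leqslant 0$ for superdifferentials of a concave function, and then read off (ii) algebraically) is precisely the standard argument in those references. Your ``more self-contained'' variant is, as you yourself note, not really independent: the global one-sided Taylor inequality $u(y)\le u(x)+\langle p_x,y-x\rangle+\tfrac{K}{2}|y-x|^2$ is most cleanly obtained from the fact that every superdifferential of the concave $\widetilde u$ gives a global affine majorant, so both sketches rest on the same concavity lemma and are really one proof. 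Both the computation in (i) (sum the two one-sided bounds with $x,y$ swapped) and the cancellation in (ii) check out.
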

\medskip

We conclude this paragraph recalling two Poincar\'e-type inequalities that will be used in the paper.
The first one is valid for trace-zero $\mathbf{W}^{1,1}$ functions (e.g. see~\cite[Theorem 3 in Section~5.6]{E}),
while  the second one, based on~\cite[Theorem 3.2]{AD} and on \cite[Proposition 3.2.1, Theorem 3.44]{ANP},
is satisfied by BV functions on convex domain.
\begin{theorem}
\label{poincare}
(Poincar\'e inequalities)
Let $\Omega\subseteq\mathbb{R}^N$ be an open, bounded and convex set with Lipschitz boundary.
\begin{itemize}
\item[$(i)$]
If $u\in \mathbf{W}^{1,1}_0(\Omega)$, then
\begin{equation}
\int_{\Omega}|u| dx\leq (\mathrm{Vol}(\Omega))^{\frac{1}{N}} \int_{\Omega}|\nabla u| dx\,.
\end{equation}
\item[$(ii)$]
If $u \in BV(\Omega,\mathbb{R}^N)$, then, letting
\begin{equation*}
u_{\Omega}:= 
\frac{1}{\mathrm{Vol}(\Omega)}\int_{\Omega}  u(x) dx, 
\end{equation*}
denote the mean value of $u$ over $\Omega$, there holds
\begin{equation}
\label{poinc-ineq}
\int_{\Omega}\big|u-u_{\Omega}\big| dx\leq \frac{\mathrm{diam}(\Omega)}{2}\cdot \big|Du\big|(\Omega),
\end{equation}
where $|Du|$ is the total variation of the Radon measure $Du$.
\end{itemize}
\end{theorem}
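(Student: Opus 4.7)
The plan is to handle the two inequalities separately: (i) via Gagliardo--Nirenberg--Sobolev combined with H\"older, and (ii) via an averaging representation for smooth functions (the Acosta--Dur\'an argument) followed by a total-variation-preserving smooth approximation to extend to $BV$.

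For (i): By density of $C_c^\infty(\Omega)$ in $\mathbf{W}^{1,1}_0(\Omega)$, it suffices to prove the inequality for $u\in C_c^\infty(\Omega)$, extended by zero to $\mathbb{R}^N$. The fundamental theorem of calculus along the $i$-th coordinate yields $|u(x)|\leq \int_{\mathbb{R}}|\partial_i u(x_1,\dots,s,\dots,x_N)|\,ds$. Raising to the power $1/(N-1)$, taking the product over $i=1,\dots,N$, and iterating H\"older's inequality one dimension at a time (the Loomis--Whitney step) gives
\[
\int_{\mathbb{R}^N}|u|^{N/(N-1)}\,dx\,\leq\,\prod_{i=1}^N\Big(\int_{\mathbb{R}^N}|\partial_i u|\,dx\Big)^{1/(N-1)},
\]
whence $\|u\|_{L^{N/(N-1)}(\mathbb{R}^N)}\leq N^{-1/2}\|\nabla u\|_{L^1(\mathbb{R}^N)}$ after AM--GM and the elementary bound $\sum_i|\partial_i u|\leq \sqrt{N}\,|\nabla u|$. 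H\"older's inequality on the bounded set $\Omega$ then gives
\[
\int_\Omega|u|\,dx\,\leq\,\mathrm{Vol}(\Omega)^{1/N}\|u\|_{L^{N/(N-1)}(\Omega)}\,\leq\,\frac{\mathrm{Vol}(\Omega)^{1/N}}{\sqrt{N}}\|\nabla u\|_{L^1(\Omega)},
\]
which implies the claimed bound (with room to spare in the constant).

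For (ii): First, approximate $u\in BV(\Omega,\mathbb{R}^N)$ by smooth functions $u_k\in C^\infty(\Omega)\cap \mathbf{W}^{1,1}(\Omega)$ in the sense of strict convergence, namely $u_k\to u$ in $L^1(\Omega)$ and $\int_\Omega|\nabla u_k|\,dx\to |Du|(\Omega)$. This is the content of Theorem~3.44 of [ANP]; the Lipschitz boundary is what allows strict convergence up to $\partial\Omega$. Since $L^1$-convergence forces $(u_k)_\Omega\to u_\Omega$, the inequality for $u$ follows at once from the smooth case by passing to the limit. For smooth $u$, the convexity of $\Omega$ guarantees that the segment $[y,x]$ lies in $\Omega$, so
\[
u(x)-u(y)\,=\,\int_0^1 \nabla u(y+t(x-y))\cdot(x-y)\,dt.
\]
Averaging this identity in $y\in\Omega$ and taking absolute values gives
\[
|u(x)-u_\Omega|\,\leq\,\frac{1}{\mathrm{Vol}(\Omega)}\int_\Omega\int_0^1 |\nabla u(y+t(x-y))|\,|x-y|\,dt\,dy.
\]
Integrating over $x\in\Omega$, splitting the $t$-integral at $t=1/2$, and performing the change of variables $z=y+t(x-y)$ with respect to $x$ on $[1/2,1]$ (Jacobian $t^{-N}$, image contained in $\Omega$ by convexity) and with respect to $y$ on $[0,1/2]$ (Jacobian $(1-t)^{-N}$) delivers the sharp constant $\mathrm{diam}(\Omega)/2$ as in [AD, Theorem~3.2]. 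The point is to retain $|x-y|=|z-y|/t$ (resp.\ $|z-x|/(1-t)$) inside the change of variables so that the singular factor $t^{-(N+1)}$ (resp.\ $(1-t)^{-(N+1)}$) is tempered by the contribution $|z-y|\leq\mathrm{diam}(\Omega)$ and the symmetric splitting produces exactly the factor $1/2$.

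The main obstacle is securing this sharp constant $\mathrm{diam}(\Omega)/2$: a naive estimate that bounds $|x-y|\leq\mathrm{diam}(\Omega)$ \emph{before} changing variables produces only a dimension-dependent multiple of $\mathrm{diam}(\Omega)\cdot|Du|(\Omega)$, and it is the Acosta--Dur\'an symmetrization across $t=1/2$ that extracts the $1/2$. The BV extension itself is routine once a strictly convergent smooth approximation is available, which is precisely what Theorem~3.44 of [ANP] furnishes on domains with Lipschitz boundary.
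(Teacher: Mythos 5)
The paper itself does not prove Theorem~\ref{poincare}: part $(i)$ is quoted from \cite[Section~5.6, Theorem~3]{E} and part $(ii)$ from \cite[Theorem~3.2]{AD} together with \cite{ANP}. Your argument for $(i)$ is the standard Gagliardo--Nirenberg--Sobolev plus H\"older proof (it even gives the better constant $\mathrm{Vol}(\Omega)^{1/N}/\sqrt{N}$, with the one-dimensional case handled directly), and your reduction of $(ii)$ from $BV$ to smooth functions via strict approximation and convergence of the means is correct (only note that strict smooth approximation holds on arbitrary open sets; the Lipschitz boundary is not what makes it work).

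The genuine gap is the claimed derivation of the constant $\mathrm{diam}(\Omega)/2$ for smooth $u$. Averaging the fundamental-theorem identity over $y\in\Omega$, splitting at $t=1/2$ and substituting $z=y+t(x-y)$ cannot produce $d/2:=\mathrm{diam}(\Omega)/2$, even if you keep $|x-y|=|z-y|/t$ inside the change of variables. Indeed, since $z\in y+t(\Omega-y)$ one has $|z-y|\le t\,d$, so the integrand is at most $t^{-N}\,d\,|\nabla u(z)|$, and the $t$-integration over $[1/2,1]$ contributes $\int_{1/2}^{1}t^{-N}\,dt=\tfrac{2^{N-1}-1}{N-1}$ for $N\ge 2$; adding the symmetric half, the best this scheme yields is a bound of the form $c_N\, d\,\|\nabla u\|_{\mathbf{L}^1(\Omega)}$ with $c_N\ge 2$ growing like $2^{N}/N$ (and even for $N=1$ it gives $2\ln 2\cdot d$). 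Nothing ``tempers'' the Jacobian singularity, and the symmetric splitting does not produce the factor $1/2$; this is exactly the naive estimate you warned against, in a slightly disguised form. The optimal constant in \cite{AD} is not obtained by any such Riesz-potential/averaging argument: Acosta and Dur\'an use a Payne--Weinberger type bisection of the convex domain into thin convex ``needles'' on which the function has zero mean, reducing the inequality to a one-dimensional weighted Poincar\'e inequality with weights whose $(N-1)$-st root is concave (Brunn--Minkowski for the cross sections), and that reduction is the substance of their proof. As written, your argument proves \eqref{poinc-ineq} only with a dimension-dependent constant $C(N)\,\mathrm{diam}(\Omega)$ in place of $\mathrm{diam}(\Omega)/2$; such a weaker form would still suffice for the entropy estimates in this paper (it only worsens the explicit constants), but it does not prove the theorem as stated. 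To close the gap you should either reproduce the bisection argument of \cite{AD} or simply cite that result, as the paper does.
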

\bigskip

\subsection{Hamilton-Jacobi equation}
\label{subsec:HJ}
Consider the Hamilton-Jacobi equation~\eqref{HJ} under the assumptions {\bf (H1)-(H2)}.
Observe that the lower bound on the Hessian matrix $D^2H$ given in~{\bf (H1)}
in particular implies the condition:
\begin{enumerate}
\item[{\bf (H1)$'$}]  $H\in C^2(\mathbb{R}^N)$ and is a uniformly convex and coercive map, i.e., $$\lim_{|p|\rightarrow\infty}\frac{H(p)}{|p|}=+\infty.$$ 
\end{enumerate}
Moreover, relying on {\bf (H1)}, we have that
\begin{equation}
\label{DH0-upbound}
\exists~m_0>0\qquad\quad\text{s.t.}\qquad
\sup_{|p|\leqslant m_0}\big\|D^2H(p)\big\|\leqslant 2\, \big\|D^2H(0)\big\|\,.
\end{equation}
%
As we mentioned in the introduction, classical smooth solutions of~\eqref{HJ}  in general break down
and Lipschitz continuous functions that satisfy~\eqref{HJ} almost everywhere 
together with an initial condition~\eqref{in-data} are not unique. To handle this problem, the following concept of  solution 
was introduced in~\cite{CL} (see also \cite{CEL}) so to guarantee global existence and uniqueness results.
\begin{definition}\label{viscosity-solution}(Viscosity solution)
We say that a continuous function $u:[0,T]\times\mathbb{R}^N$ is a viscosity solution of \eqref{HJ} if:
\begin{enumerate}
\item[$\mathrm{(1)}$] u is a viscosity subsolution of \eqref{HJ}, i.e., for every point $(t_0,x_0)\in \,]0,T[\,\times\mathbb{R}^N$ and test function $v\in C^1\big((0,+\infty)\times\mathbb{R}^N\big)$ such that $u-v$ has a local maximum at $(t_0,x_0)$, it holds
\[
v_t(t_0,x_0)+H\big(\nabla_{\!x} v(t_0,x_0)\big)\leqslant 0\,,
\] 
\item[$\mathrm{(2)}$]  u is a viscosity supersolution of (\ref{HJ}), i.e., for every point $(t_0,x_0)\in \,]0,T[\,\times\mathbb{R}^N$ and test function $v\in C^1\big((0,+\infty)\times\mathbb{R}^N\big)$ such that $u-v$ has a local minimum at $(t_0,x_0)$, it holds
\[
v_t(t_0,x_0)+H\big(\nabla_{\!x} v(t_0,x_0)\big)\geqslant 0\,.
\] 
\end{enumerate}
In addition, we say that $u$ is a viscosity solution of the Cauchy problem~\eqref{HJ}-\eqref{in-data} 
if condition~\eqref{in-data} is satisfied in the classical sense.
\end{definition}
\begin{remark}
\label{visos-sol-differentiab}\rm
By the alternative equivalent definition of viscosity solution expressed in terms of 
the sub- and superdifferential of the function (see~\cite{CEL}), 
and because  of Theorem~\ref{Pro-Semi}-$(iv)$,
one immediately see that every $C^1$ solution of~\eqref{HJ} is also a viscosity solution
of~\eqref{HJ}. On the other hand, if $u$ is a viscosity solution of~\eqref{HJ}, then $u$ satisfies the equation
at every point of differentiability. Moreover, by the definition of reachable
gradient, it follows that there holds
\begin{equation}
\label{hj-reach-grad}
p_t+H(p_x)=0\qquad\forall~(p_t,p_x)\in D^*u(t,x)\,,
\end{equation}
at any $(t,x)\in[0,+\infty[\times\mathbb{R}^N$.
\end{remark}
It is well-known that a viscosity solution $u$ of~\eqref{HJ} is locally semiconcave
(see, for instance, \cite[Theorem~5.3.8]{CS}).
Relying on the properties of the semiconcave functions recalled in the previous section,
one can prove further regularity  for viscosity solutions which will be useful in the paper.
\begin{proposition}
\label{co:smoothness}
Let $u:[0,T]\times \R^N$ be a viscosity solution of~\eqref{HJ} 
and assume that 
$u(t,\cdot )$ is both semiconcave and semiconvex in $\R^N$ for all $t\in \,]0,T]$. 
Then $u$ is a continuously differentiable 
classical solution of~\eqref{HJ} on~$]0,T]\times \R^N$.
\end{proposition}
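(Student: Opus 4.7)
The plan is to exploit the fact that the hypothesis forces pointwise spatial $C^{1,1}$-regularity via Remark \ref{semiconc-semiconc}, then use the viscosity-solution information encoded in the reachable gradients to upgrade this to joint $C^1$ regularity via Theorem~\ref{Pro-Semi}(iv)-(v).

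First I would note that, by Remark \ref{semiconc-semiconc} applied slicewise in the spatial variable, the hypothesis that $u(t,\cdot)$ is simultaneously semiconcave and semiconvex on $\R^N$ implies $u(t,\cdot)\in C^{1,1}(\R^N)$ for every $t\in \,]0,T]$. In particular, the spatial gradient $\nabla_{\!x} u(t,x)$ is well-defined and continuous in $x$ at every $(t,x)\in\,]0,T]\times\R^N$. Next, I invoke the standard fact (quoted right before the statement) that any viscosity solution of~\eqref{HJ} is locally semiconcave in the joint variable $(t,x)$ on $]0,T[\times\R^N$, so that Theorem~\ref{Pro-Semi} applies to $u$ regarded as a function of $(t,x)$.

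Fix now $(t_0,x_0)\in\,]0,T[\,\times\R^N$ and take any $(p_t,p_x)\in D^+u(t_0,x_0)$. Restricting the defining inequality in~\eqref{sup-sub-diff} to variations with $s=0$ in the time coordinate, one sees that $p_x\in D^+\!\big(u(t_0,\cdot)\big)(x_0)$; since $u(t_0,\cdot)$ is differentiable in $x$ at $x_0$, this forces $p_x=\nabla_{\!x} u(t_0,x_0)$. In other words, the spatial projection of $D^+u(t_0,x_0)$ is the single point $\nabla_{\!x} u(t_0,x_0)$. Applying this to elements of $D^*u(t_0,x_0)\subset D^+u(t_0,x_0)$ and combining with~\eqref{hj-reach-grad} from Remark~\ref{visos-sol-differentiab}, each $(p_t,p_x)\in D^*u(t_0,x_0)$ must satisfy $p_x=\nabla_{\!x} u(t_0,x_0)$ and $p_t=-H(p_x)=-H(\nabla_{\!x} u(t_0,x_0))$. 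Hence $D^*u(t_0,x_0)$ is the singleton $\{(-H(\nabla_{\!x} u(t_0,x_0)),\nabla_{\!x} u(t_0,x_0))\}$.

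By Theorem~\ref{Pro-Semi}(iii), $D^+u(t_0,x_0)=\mathrm{co}\,D^*u(t_0,x_0)$ is then itself a singleton, so Theorem~\ref{Pro-Semi}(iv) yields differentiability of $u$ at every $(t_0,x_0)\in\,]0,T[\,\times\R^N$; Theorem~\ref{Pro-Semi}(v) then upgrades this to $u\in C^1(]0,T[\,\times\R^N)$. The boundary case $t=T$ is handled by the same slicewise argument together with continuity of $\nabla_{\!x} u$ up to $t=T$. Finally, at every differentiability point the viscosity equation is satisfied in the classical sense (Remark~\ref{visos-sol-differentiab}), so $u$ is a classical $C^1$ solution on $]0,T]\times\R^N$. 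The only delicate point is the step deducing $p_x=\nabla_{\!x} u(t_0,x_0)$ for elements of the joint superdifferential; this is where the semiconvexity hypothesis is crucial, since without it $u(t_0,\cdot)$ would only have a nontrivial spatial superdifferential and the above reduction would fail.
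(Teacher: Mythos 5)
Your proposal is correct and follows essentially the same strategy as the paper: use spatial $C^1$ regularity to pin down the spatial component of the joint superdifferential, then use the relation $p_t+H(p_x)=0$ on reachable gradients to pin down the time component, so that $D^+u(t_0,x_0)$ is a singleton at every $(t_0,x_0)$. The only cosmetic differences are that you obtain the inclusion $\Pi_x\, D^+u(t_0,x_0)\subset D_x^+u(t_0,x_0)$ directly from the definition in~\eqref{sup-sub-diff} by restricting to variations with $s=t_0$ (rather than citing \cite[Lemma~3.3.16]{CS} for the equality), and you show that $D^*u(t_0,x_0)$ is a singleton before taking the convex hull rather than first writing $D^+u(t_0,x_0)=[\tau_-,\tau_+]\times\{\nabla_{\!x} u(t_0,x_0)\}$ and then showing $\tau_-=\tau_+$.
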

In other words,  smoothness in the pair $(t,x)$ follows from smoothness in the second variable.  We give a proof for the reader's convenience.
\begin{proof}
Since a viscosity solution is locally semiconcave, relying on property $(i)$ of Theorem~\ref{pro-semiconc}
and properties $(ii)$, $(iv)$ of Theorem~\ref{Pro-Semi}
it follows that, in order to show that $u$ is everywhere continuously
differentiable, it is sufficient to prove
that the superdifferential $D^+u(t,x)$ is a singleton for all $(t,x)\in \,]0,T]\times\R^N$. 
In turn, the differentiability of $u$ implies that the equation~\eqref{HJ} is pointwise satisfied in the
classical sense by Remark~\ref{visos-sol-differentiab}.
Then, fix $(t_0,x_0)\in \,]0,T]\times\R^N$ and observe that, by Remark~\ref{semiconc-semiconc},
$u(t_0,\cdot )$ is  differentiable at $x_0$
since it is both semiconcave and semiconvex in $\R^N$.
Therefore, by property~$(iv)$ of Theorem~\ref{Pro-Semi}, 
the superdifferential $D_x^+u(t_0,x_0)$ of $u(t_0,\cdot)$ at $x_0$ is the singleton $\{\nabla_{\!x} u(t_0,x_0)\}$.
On the other hand,
invoking a well-known property of the superdifferential 
(see, for instance, \cite[Lemma~3.3.16]{CS}) we deduce that $\Pi_x\, D^+u(t_0,x_0)=D_x^+u(t_0,x_0)$, where $\Pi_x$ 
denotes the projection of $\R\times\R^N$ onto $\R^N$ defined by $\Pi_x(t,x)=x$.
Hence, recalling property~$(i)$ of Theorem~\ref{Pro-Semi}, 
we get
$
D^+u(t_0,x_0)=[\tau_-,\tau_+]\times\{\nabla_{\!x} u(t_0,x_0)\}
$
for some $\tau_\pm\in\R$, with $\tau_-\leqslant \tau_+$. This implies that  $(\tau_\pm, \nabla_{\!x} u(t_0,x_0))\in D^*u(t_0,x_0)$
by property~$(iii)$ of Theorem~\ref{Pro-Semi}. So,
applying~\eqref{hj-reach-grad}, we find
$
\tau_-+H\big( \nabla_{\!x} u(t_0,x_0))\big)=0=\tau_++\big( \nabla_{\!x} u(t_0,x_0))\big)
$,
which in turn yields $\tau_-=\tau_+$, showing that $D^+u(t_0,x_0)$ is a singleton as desired.
\end{proof}
\noindent
Further analysis  shows that, with the same hypotheses of Proposition~\ref{co:smoothness}, 
the viscosity solutions of \eqref{HJ} have a locally Lipschitz gradient in $(t,x)$.
\bigskip

Under  assumption \textbf{(H1)}$'$, 
the viscosity solution of the Hamilton-Jacobi equation (\ref{HJ}) with initial data $u(0,\cdot)=u_{0}\in \Lip(\mathbb{R}^N)$
can be represented as the value function
of a classical problem in calculus of variation, which admits the Hopf-Lax representation formula 
\begin{equation}\label{Hopf}
u(t,x)=\min_{y\in\mathbb{R}^N}\bigg\lbrace{t\cdot H^*\Big(\frac{x-y}{t}\Big)+u_0(y)\bigg\rbrace},
\qquad t>0, \ \ x\in \mathbb{R}^N\,,
\end{equation} 
where $H^*$ denotes the Legendre transform of $H$, defined by
\begin{equation}\label{Legendre-L}
H^*(q):=\max_{p\in\mathbb{R}^N}\big\{p\cdot q-H(p)\big\} \qquad q\in\mathbb{R}^{N}.
\end{equation}
The Legendre transform inherits the properties of $H$ (cfr.~\cite[Appendix A.2]{CS}). In particular,   assumption  \textbf{(H1)}$'$
implies that $H^*\in C^2(\mathbb{R}^N)$ and  $H^*$ is a uniformly convex  coercive
map, 
i.e.,
\begin{equation}
\label{suplin-legendre}
\lim_{|p|\rightarrow\infty}\frac{H^*(p)}{|p|}=+\infty.
\end{equation}
Moreover,  $\nabla H^*$ is a $C^1$ diffeomorphisms on $\R^N$ as $\nabla H$,
and one has
\begin{equation}
\label{HH*-grad}
\qquad
(\nabla H^*)^{-1}(p)=\nabla H(p),
\qquad \quad
D^2 H^*(p) = \Big(D^2H (\nabla H^*(p))\Big)^{\!-1}
\qquad\quad\forall~p\in\R^N\,.
\end{equation}
On the other hand, the lower bound  bound on the Hessian matrix $D^2H$ given in~{\bf (H1)} implies
\begin{equation}
\label{low-bound-hess-legendre}
D^2H^*\leq\frac{1}{\alpha}\cdot \mathbb{I}_{N},
\end{equation}
while, by virtue of~{\bf (H2)}, we have
\begin{equation}
\label{HH*0}
H^*(0)=-H(0).
\end{equation}

The main properties of viscosity solutions defined by the Hopf-Lax formula
of interest to this paper are recalled below
(cfr.~\cite[Section 1.2, Section 6.4]{CS}, \cite[Section 3.3]{E}). 
\begin{proposition}\label{Pro}
Let $u$ be the viscosity solution of \eqref{HJ}  on $[0,+\infty[\,\times\R^N$, with initial data $u_0\in\Lip(\mathbb{R}^N)$,
defined by~\eqref{Hopf}. Then the following holds true.
\begin{enumerate}
\item[$(i)$] \textbf{Functional identity}: for all $x\in\mathbb{R}^N$ and $0\leqslant s<t$,
it holds
\[
u(t,x)=\min_{y\in\mathbb{R}^N}\Big\lbrace{u(s,y)+(t-s)\cdot H^*\Big(\frac{x-y}{t-s}\Big)\Big\rbrace}\,.
\]
\item[$(ii)$] \textbf{Differentiability of $u$ and uniqueness}: 
for all $x\in\mathbb{R}^N$ and $t> 0$, any minimizer $y_x$ of~\eqref{Hopf}
satisfies $y_x\in \{x-t\cdot \nabla H(p)\, | \, p\in D^*_{\!x}(u(t,x)\}$, where $D^*_{\!x}(u(t,x)$ denotes the reachable gradient of
$u(t,\cdot)$ at $x$.
Moreover, \eqref{Hopf} admits a unique minimizer $y_x$ if and only if $u(t,\cdot)$ is differentiable at $x$. In this case we have that $y_x=x-t\cdot \nabla H\big(\nabla_{\!x}(u(t,x)\big)$.
\item[$(iii)$] \textbf{Dynamic programming principle}: let \, $t>s>0$, $x\in\mathbb{R}^N$,  assume that $y$ is a minimizer for \eqref{Hopf}, and define $z=\frac{s}{t}x+(1-\frac{s}{t})y$. Then $y$ is the unique minimizer over~$\R^N$ of
\begin{equation*}
w\mapsto s\cdot H^*\Big(\frac{z-w}{s}\Big)+u_0(w)\qquad(w\in\R^n)\,.
\end{equation*}
\end{enumerate} 
\end{proposition}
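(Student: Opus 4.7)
The plan is to establish the three assertions in order, with the key analytic tool being the strict convexity of $H^*$, which follows from \textbf{(H1)} and \eqref{HH*-grad} since $D^2H^*(q)=(D^2H(\nabla H^*(q)))^{-1}$ is positive definite. The scaling identity I use repeatedly is that if $y^*$ is a minimizer in \eqref{Hopf} for $u(t,x)$ and $z^*=\tfrac{s}{t}x+(1-\tfrac{s}{t})y^*$, then a direct algebraic computation gives
\[
\frac{x-y^*}{t}=\frac{z^*-y^*}{s}=\frac{x-z^*}{t-s}.
\]

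For assertion $(i)$ I would prove both directions of the identity. For ``$\le$'', fix an arbitrary $z\in\R^N$ and a minimizer $y_z$ of $u(s,z)$, decompose $\tfrac{x-y_z}{t}=\tfrac{t-s}{t}\cdot\tfrac{x-z}{t-s}+\tfrac{s}{t}\cdot\tfrac{z-y_z}{s}$ and apply convexity of $H^*$; adding $u_0(y_z)$ and taking the infimum over $z$ yields the bound. For ``$\ge$'', I evaluate the right-hand side at the specific point $z^*$ associated to a minimizer $y^*$ of $u(t,x)$; by the scaling identity every term collapses and the value reduces exactly to $tH^*(\tfrac{x-y^*}{t})+u_0(y^*)=u(t,x)$.

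For assertion $(ii)$ I would argue from first-order optimality in \eqref{Hopf}: if $y_x$ minimizes and $u_0$ is differentiable at $y_x$, then $\nabla H^*(\tfrac{x-y_x}{t})=\nabla u_0(y_x)$, so \eqref{HH*-grad} gives $y_x=x-t\nabla H(\nabla u_0(y_x))$. To handle a general minimizer I would select a sequence $x_n\to x$ of differentiability points of $u(t,\cdot)$ whose (unique) minimizers $y_{x_n}$ converge to $y_x$, and pass to the limit: the stability of the Hopf-Lax problem identifies $p:=\nabla H^*(\tfrac{x-y_x}{t})$ as an element of $D^*_x u(t,x)$, whence $y_x=x-t\nabla H(p)$. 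The equivalence ``unique minimizer $\Longleftrightarrow$ differentiability'' then follows because the correspondence $y_x\mapsto\nabla H^*(\tfrac{x-y_x}{t})$ between minimizers and elements of $D^*_x u(t,x)$ is injective, and by Theorem~\ref{Pro-Semi}-(iii)-(iv), $u(t,\cdot)$ is differentiable at $x$ iff $D^*_x u(t,x)$ is a singleton.

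Finally, for assertion $(iii)$ I would apply $(i)$ at the intermediate time $s$: with $y$ a minimizer for $u(t,x)$ and $z$ as defined in the statement, the scaling identity gives
\[
u(t,x)=sH^*(\tfrac{z-y}{s})+(t-s)H^*(\tfrac{x-z}{t-s})+u_0(y)\ge u(s,z)+(t-s)H^*(\tfrac{x-z}{t-s})\ge u(t,x),
\]
forcing both inequalities to be equalities, so that $y$ is in particular a minimizer for $u(s,z)$. The one delicate point, and the main obstacle of the whole proposition, is uniqueness of this minimizer: if $y'$ were another minimizer at $(s,z)$, then the convexity inequality from $(i)$ applied with $y_z=y'$ must hold as equality, and the strict convexity of $H^*$ (the crucial ingredient) would force $\tfrac{x-z}{t-s}=\tfrac{z-y'}{s}$, which together with the identical relation for $y$ yields $y'=y$.
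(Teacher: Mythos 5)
The paper itself does not prove Proposition~\ref{Pro}; it is quoted from \cite[Sections 1.2 and 6.4]{CS} and \cite[Section 3.3]{E}, so I can only assess your argument on its merits.

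Your proofs of $(i)$ and $(iii)$ are correct and are the standard convexity/envelope arguments. One structural remark: you present $(iii)$ last, but the proof of $(iii)$ uses only $(i)$ and strict convexity, whereas the hard direction of $(ii)$ really needs $(iii)$; the natural logical order is $(i)\Rightarrow(iii)\Rightarrow(ii)$.

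The gap is in $(ii)$. The first-order optimality step presupposes that $u_0$ is differentiable at $y_x$, which only holds Lebesgue-a.e.\ and cannot be assumed for an arbitrary minimizer. For the general case you then assert that one can ``select a sequence $x_n\to x$ of differentiability points of $u(t,\cdot)$ whose (unique) minimizers $y_{x_n}$ converge to $y_x$.'' This is precisely where the argument breaks: the minimizer map $x\mapsto M(t,x)$ is only upper semicontinuous, so the $y_{x_n}$ subconverge to \emph{some} minimizer, with no control that the limit is the prescribed $y_x$. What your sequence argument actually proves is the inclusion
\[
\big\{x-t\,\nabla H(p)\ \big|\ p\in D^*_x u(t,x)\big\}\ \subset\ M(t,x),
\]
i.e.\ every reachable gradient produces a minimizer. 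The statement of $(ii)$, by contrast, is the \emph{reverse} inclusion: that every minimizer $y_x$ satisfies $\nabla H^*\big(\tfrac{x-y_x}{t}\big)\in D^*_x u(t,x)$. The envelope argument only places this vector in $D^+_x u(t,x)$, and passing from $D^+$ to $D^*$ is the genuine content of the claim. The standard route (cf.\ \cite[Theorem 6.4.9]{CS}) proves the dynamic programming principle first, shows that for $0<s<t$ the intermediate point $z(s)=\tfrac{s}{t}x+(1-\tfrac{s}{t})y_x$ is a differentiability point of $u(s,\cdot)$ with $\nabla_x u(s,z(s))=\nabla H^*\big(\tfrac{x-y_x}{t}\big)$, and then uses semiconcavity of $u$ in $(t,x)$ jointly (together with the equation) to let $s\uparrow t$ and land in $D^*_x u(t,x)$. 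Your argument as written never accesses this mechanism. Consequently the final equivalence ``unique minimizer $\Longleftrightarrow$ differentiable'' is also incomplete: you appeal to injectivity of the correspondence $y_x\mapsto\nabla H^*\big(\tfrac{x-y_x}{t}\big)$ ``between minimizers and elements of $D^*_x u(t,x)$'', but that the correspondence lands in $D^*$ (well-definedness) and is onto $D^*$ (which your sequence argument does give) are exactly the two halves that must be supplied, and the first half is the unproved inclusion above.
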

By the above observations and because of Proposition~\ref{Pro}-(i),
the family of nonlinear operators
$$S_t:\Lip(\mathbb{R}^N)\rightarrow\Lip(\mathbb{R}^N), 
\qquad\quad u_0\mapsto S_t u_0, \ \ t\geqslant 0,$$
defined by
\begin{equation}
\label{hopf-lax-smgr}
\begin{cases}
\hspace{.cm}
S_t u_0(x):=\min_{y\in\mathbb{R}^N}\Big\lbrace{t\cdot H^*\big(\frac{x-y}{t}\big)+u_0(y)\Big\rbrace}
&t>0\,,\;x\in\R^N,
\\S_0 u_0(x) :=u(x)&x\in\R^N,
\end{cases}
\end{equation}
enjoy the following properties:
\begin{itemize}
\item[(i)] for every  $u_0\in\Lip(\mathbb{R}^N)$, $u(t,x):=S_t u_0(x)$ provides the unique viscosity solution of the 
Cauchy problem~\eqref{HJ}-\eqref{in-data};
\item[(ii)] (semigroup property)
$$S_{t+s} u_0 = S_t\, S_s u_0
\,,\quad\forall t,s\geqslant 0\,,\;\forall u_0\in\Lip(\mathbb{R}^N);$$
\item[(iii)] for every constant $c\in\R$ we have that
\begin{equation}\label{eq:support}
S_t(u_0+c)=S_t u_0+c\,,\qquad\forall u_0\in\Lip(\R^N)\,,\;\forall t\geqslant 0\,.
\end{equation}
\end{itemize}
It's a well-known fact that, for every fixed $t\geq 0$, the map $S_t$ is continuos with respect to the
topology of uniform convergence on compact sets.
We next provide a proof of the continuity of such a map also in the case where the space 
$\Lip(\mathbb{R}^N)$ is endowed with the $\mathbf{W}^{1,1}_{loc}$-topology
and $S_t$ is restricted to  sets of functions with uniform Lipschitz constant.
Namely, the following holds.
\begin{proposition}
\label{St-W11-continuity}
Let $u, u^\nu\in \Lip(\mathbb{R}^N)$  $(\nu \in \N)$ be such that
%
\begin{gather}
\label{unu-bound}
\qquad
\Lip[u^\nu]
\leqslant M\qquad\forall~\nu\,,
\qquad\text{for some} \ \ M>0\,,
%
\\
\noalign{\smallskip}
\label{unu-conv}
u^\nu \ \ \underset{\nu\to\infty}\longrightarrow \ \  u 
\qquad\text{in}\qquad \mathbf{W}^{1,1}_{loc}(\R^N)\,.
\end{gather}
Then, for every fixed $t\geq 0$, one has
\begin{equation}
\label{conv-semigr}
S_t u^\nu\ \ \underset{\nu\to\infty}\longrightarrow \ \  S_t u 
\qquad\text{in}\qquad \mathbf{W}^{1,1}_{loc}(\R^N)\,.
\end{equation}
\end{proposition}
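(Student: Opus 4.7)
The plan is to proceed in two stages: first upgrade $\mathbf{W}^{1,1}_{loc}$-convergence of the initial data to uniform convergence of the evolved functions on compacts, and then use the common semiconcavity bound to upgrade that uniform convergence into $\mathbf{L}^1_{loc}$-convergence of the gradients. The case $t=0$ is trivial, so I assume $t>0$.

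As a preliminary, observe that the Hopf--Lax formula~\eqref{hopf-lax-smgr} preserves the Lipschitz constant: for each $\nu$, given $x_1,x_2\in\R^N$ and the optimal $y_1$ at $x_1$, the choice $y_2:=y_1+(x_2-x_1)$ makes the $H^*$-term cancel in $S_t u^\nu(x_2)-S_t u^\nu(x_1)$, and~\eqref{unu-bound} gives the bound $M|x_1-x_2|$. Hence $\{S_tu^\nu\}$ and (by taking limits) $S_tu$ are all $M$-Lipschitz. Combined with~\eqref{unu-conv} (which gives $L^1_{loc}$-convergence of $u^\nu$ to $u$), the equi-Lipschitz bound on $\{u^\nu\}$ together with Arzelà--Ascoli forces $u^\nu\to u$ \emph{uniformly on every compact set} of $\R^N$: any subsequence has a uniformly convergent sub-subsequence whose limit must coincide with $u$ by the $L^1_{loc}$-convergence.

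To transfer this uniform convergence through $S_t$, I would localize the minimizers in~\eqref{hopf-lax-smgr}. Because $S_tu^\nu$ is $M$-Lipschitz, every reachable gradient of $S_tu^\nu(\cdot)$ at $x$ lies in $\overline{B(0,M)}$, so Proposition~\ref{Pro}-$(ii)$ says every minimizer $y_x$ in the Hopf--Lax formula for $S_tu^\nu(x)$ satisfies
\begin{equation*}
|y_x-x|\;\leqslant\;t\,C,\qquad C:=\sup_{|p|\leqslant M}|\nabla H(p)|.
\end{equation*}
The same bound holds for the minimizer associated with $S_tu(x)$. Therefore, for a compact $K\subset\R^N$, setting $K':=K+\overline{B(0,tC)}$, both minima in~\eqref{hopf-lax-smgr} at any $x\in K$ may be taken over $K'$, and comparing the two value functions yields
\begin{equation*}
\sup_{x\in K}\bigl|S_tu^\nu(x)-S_tu(x)\bigr|\;\leqslant\;\sup_{y\in K'}\bigl|u^\nu(y)-u(y)\bigr|\ \underset{\nu\to\infty}\longrightarrow\ 0,
\end{equation*}
which in particular gives $L^1_{loc}$-convergence of $S_tu^\nu$ to $S_tu$.

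The main obstacle is the final step: upgrading this uniform convergence to $L^1_{loc}$-convergence of the gradients. I would exploit the fact that every $S_tu^\nu$ and $S_tu$ is semiconcave on $\R^N$ with the \emph{same} constant $1/(\alpha t)$, so that the functions $v^\nu(x):=S_tu^\nu(x)-|x|^2/(2\alpha t)$ and $v(x):=S_tu(x)-|x|^2/(2\alpha t)$ are concave and $v^\nu\to v$ uniformly on compacts. A classical result in convex analysis (e.g.\ Rockafellar, \emph{Convex Analysis}, Thm.~25.7) then guarantees that $\nabla v^\nu(x)\to\nabla v(x)$ at every point where $v$ is differentiable; by Alexandrov's theorem (Theorem~\ref{pro-semiconc}-$(ii)$) this covers almost every $x\in\R^N$. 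Since $|\nabla v^\nu|$ is locally uniformly bounded by $M+|x|/(\alpha t)$, dominated convergence promotes a.e.\ convergence to $L^1_{loc}$-convergence. Adding back the smooth term $x/(\alpha t)$ yields $\nabla(S_tu^\nu)\to\nabla(S_tu)$ in $L^1_{loc}(\R^N)$, which together with the $L^1_{loc}$-convergence of the functions themselves establishes~\eqref{conv-semigr}.
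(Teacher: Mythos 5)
Your proof is correct, and up through the $L^1_{loc}$-convergence of $S_tu^\nu$ it runs parallel to the paper's: both arguments pass to uniform convergence of $u^\nu$ on compacts via Arzel\`a--Ascoli, and both transfer this through the Hopf--Lax infimum by localizing the minimizers via Proposition~\ref{Pro}-$(ii)$. Where you genuinely diverge is the key step --- convergence of the gradients. The paper exploits the precise structure of the Hopf--Lax formula: at a.e.\ $x$ the minimizer is $y^\nu_x=x-t\,\nabla H\bigl(\nabla S_tu^\nu(x)\bigr)$, one shows $y^\nu_x\to y_x$ pointwise a.e.\ by uniqueness, and then recovers $\nabla S_tu^\nu\to\nabla S_tu$ a.e.\ by inverting $\nabla H$ (a $C^1$-diffeomorphism under \textbf{(H1)}). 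You instead replace this by an abstract convex-analysis fact: the common semiconcavity constant $1/(\alpha t)$ reduces the problem to concave functions $v^\nu\to v$ converging locally uniformly, for which gradient convergence at each point of differentiability of $v$ is classical (Rockafellar Thm.~25.7, or more precisely the variant allowing nonsmooth $v^\nu$ with an arbitrary subgradient selection, which follows from Thm.~24.5 plus locally uniform convergence). Both paths then finish with a local $L^\infty$ bound and dominated convergence. Your route is arguably cleaner in that it avoids the explicit minimizer identity, but it leans on an external convexity theorem; the paper's route stays entirely within the Hopf--Lax machinery already set up, which it also reuses elsewhere. One small thing: you should note, as the paper does, that the Lipschitz bound $\Lip[S_tu^\nu]\leqslant M$ you establish is exactly Lemma~\ref{ProV}-$(ii)$; citing it would avoid duplicating that argument.
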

\begin{proof}
In order to establish the proposition it will be sufficient to show that, 
for every given bounded domain $\Omega\subset\R^N$, and for any fixed $t\geq 0$, 
there holds
\begin{equation}
\label{conv-semigr-2}
S_t u^\nu \ \ \underset{\nu\to\infty}\longrightarrow \ \  S_t u 
\qquad\text{in}\qquad \mathbf{W}^{1,1}(\Omega)\,.
\end{equation}
Observe that, because of~\eqref{unu-bound}, and relying on the
a-priori bound on the gradient of the solution to~\eqref{HJ} provided by 
Lemma~\ref{ProV} in the next section, we have 
$\Lip[S_t u^\nu] \leqslant M$ for all $\nu$.
In turn, this implies $|p|\leq M$ for all $p\in D_x^*\,S_t u^\nu(x)$, $x\in\R^N$, and for any $\nu$.
Thus, invoking Proposition~\ref{Pro}-$(ii)$ we deduce that, for all $x\in\Omega$
and for any minimizer $y_x^\nu$ of~\eqref{Hopf}, with $u^\nu$ in place of $u_0$, one has
\begin{equation}
\label{o-prime-def}
y_x^\nu\in \Omega':=\big\{x\in\R^N\, | \, d(x,\Omega)\leq t\cdot \sup_{|p|\leqslant M}|\nabla H(p)|\big\}
\qquad\forall~\nu\,.
\end{equation}
Next, notice that because of~\eqref{unu-bound}, \eqref{unu-conv}, 
letting $\overline x\in\Omega$ be a point such that $u(\overline x)= \lim_{\nu\to\infty} u^\nu(\overline x)$, 
we have
\begin{equation}
\label{equicont}
\begin{aligned}
\big|u^\nu(x)\big|\leq  &\sup_\nu \big|u^\nu(\overline x)\big|+ M \cdot \mathrm{diam}(\Omega') <+\infty
\qquad\forall~x\in\overline{\Omega'}\,,
\\
\noalign{\smallskip}
&\big|u^\nu(x)-u^\nu(y)\big|\leq M \cdot \big|x-y\big|\qquad\quad\forall~x,y\in\overline{\Omega'}\,,
\end{aligned}
\qquad\quad\forall~\nu\,.
\end{equation}
Therefore, by a standard argument based on~\eqref{unu-conv} and the Ascoli-Arzel\`a compactness theorem,
we deduce that 
\begin{equation}
\label{univ-donv-unu}
u^\nu \ \ \underset{\nu\to\infty}\longrightarrow \ \ u\qquad\text{uniformly on}\quad\Omega'\,.
\end{equation}
Repeating the same reasoning for every bounded domain of $\R^N$ it 
follows that $u^\nu$ converges to~$u$
(uniformly on compact sets) on the whole space $\R^N$ and that $\Lip[u]\leq M$.
Hence, for all $x\in\Omega$
and for any minimizer $y_x$ of~\eqref{Hopf}, with $u$ in place of $u_0$, one has $y_x\in\Omega'$.
In turn,  together with~\eqref{o-prime-def} and \eqref{univ-donv-unu}, this fact
implies that 
\begin{equation}
\label{min-conv}
\qquad\quad
\min_{y\in\mathbb{R}^N}\Big\lbrace{t\cdot H^*\big(\tfrac{x-y}{t}\big)+u^\nu(y)\Big\rbrace}
\ \ \underset{\nu\to\infty}\longrightarrow \ \
\min_{y\in\mathbb{R}^N}\Big\lbrace{t\cdot H^*\big(\tfrac{x-y}{t}\big)+u(y)\Big\rbrace}
\qquad\text{uniformly on} \ \ \Omega\,,
\end{equation}
which, by virtue of definition~\eqref{hopf-lax-smgr}, yields
\begin{equation}
\label{conv-Stu}
S_t u^\nu(x) \ \ \underset{\nu\to\infty}\longrightarrow \ \ S_t u(x)\qquad\text{uniformly on} \ \ \Omega\,.
\end{equation}
As a consequence, we deduce that
\begin{equation}
\label{conv-semigr-L1}
S_t u^\nu \ \ \underset{\nu\to\infty}\longrightarrow \ \  S_t u 
\qquad\text{in}\qquad \mathbf{L}^{1}(\Omega)\,.
\end{equation}
On the other hand, observe that by Proposition~\ref{Pro}-$(ii)$
it follows that $S_t u^\nu$, $S_t u$ are differentiable almost everywhere
in $\Omega$, and  there holds
\begin{equation}
\label{minimiz-unu}
\begin{aligned}
y^\nu_x &= x - t\cdot \nabla H\big(\nabla S_t u^\nu (x)\big)
\quad\forall~\nu\,,
\\
\noalign{\smallskip}
y_x &= x - t\cdot \nabla H\big(\nabla S_t u (x)\big),
\end{aligned}
\qquad\quad\text{for \ a.e.}\ \ x\in\Omega\,,
\end{equation}
where $y^\nu_x,\, y_x$ denotes the unique minimizer 
of~\eqref{Hopf}, with $u^\nu$ and $u$ in place of $u_0$, respectively.
Moreover, because of the uniqueness of such minimizers of~\eqref{Hopf},
and by virtue of the convergence~\eqref{min-conv}, \eqref{conv-Stu}, 
we deduce that $\{y^\nu_x\}_\nu$ converges to $y_x$ for almost every $x\in \Omega$.
Thus, relying on~\eqref{minimiz-unu}, and recalling that $\nabla H$ is a diffeomorphism on $\R^N$, we infer that
\begin{equation}
\label{grad-Stunu-conv}
\nabla S_t u^\nu (x)
\ \ \underset{\nu\to\infty}\longrightarrow \ \ 
\nabla S_t u (x)
\qquad\text{for \ a.e.}\ \ x\in\Omega\,.
\end{equation}
On the other hand,  these unique minimizers satisfy $y^\nu_x \in\Omega'$ for all $\nu$
and for almost every $x\in\Omega$, 
so that
one has
\begin{equation}
\label{minimiser-bounbd}
\bigg|\frac{x-y^\nu_x}{t}\bigg|\leqslant \sup_{|p|\leqslant M}|\nabla H(p)|
\qquad\forall~\nu\,,\qquad\text{for \ a.e.}\ \ x\in\Omega\,.
\end{equation}
Thus, because of~\eqref{minimiz-unu}, \eqref{minimiser-bounbd},
we derive a uniform $\mathbf{L}^\infty$
bound on $\nabla S_t u^\nu$, $\nu\in \N$, over $\Omega$, which, together with~\eqref{grad-Stunu-conv}, implies
\begin{equation}
\label{conv-semigr-nabla-L1}
\nabla S_t u^\nu \ \ \underset{\nu\to\infty}\longrightarrow \ \  \nabla S_t u 
\qquad\text{in}\qquad \mathbf{L}^{1}(\Omega)\,.
\end{equation}
Then, from \eqref{conv-semigr-L1}, \eqref{conv-semigr-nabla-L1}
we recover~\eqref{conv-semigr-2}, concluding the proof of the proposition.
\end{proof}
\bigskip

\section{Upper estimates}
\label{sec:up-ext}
\subsection{A-priori bounds on the Hopf-Lax semigroup}
Let $H:\R^N\to \R$ be  a function satisfying  the assumptions {\bf (H1)-(H2)}.
We  collect here  some a-priori bounds  on the
semiconcavity costant and on the gradient of the solutions to~\eqref{HJ}
and we  establish an a-priori bound  on the size of their support.
Namely, given $L,M>0$,  consider the set of initial data introduced in~\eqref{Cclass}:
\begin{equation*}
\mathcal C_{[L,M]}=\Big\lbrace{u_0\in \Lip(\mathbb{R}^N)\ \big|\ \mathrm{supp}(u_0)\subset [-L,L]^N\,,\; 
\Lip[u_0]\leqslant M\Big\rbrace}.
\end{equation*}
The image of $\mathcal C_{[L,M]}$ through the Hopf-Lax 
semigroup map $S_T$ defined in~\eqref{hopf-lax-smgr} enjoy the properties stated in the following
\begin{lemma}\label{ProV} 
For any $L, M, T>0$ and for every $u_0\in \mathcal C_{[L,M]}$, 
the following properties hold true:
\begin{enumerate}
\item[(i)] $S_T u_0$ is semiconcave in $\R^N$ with constant $\frac{1}{\alpha T}$;
\item[(ii)] $\Lip[S_T u_0]
\leqslant M$;
\item[(iii)] 
$\mathrm{supp}\big(S_T u_0+T\cdot H(0)\big)\subset \big[\!-\!l_{[L,M,T]},\,l_{[L,M,T]}\big]^N$,
where $l_{[L,M,T]}$ is the constant defined in~\eqref{LT}.
%
\end{enumerate}
\end{lemma}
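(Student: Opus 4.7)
My strategy is to deduce each of the three claims directly from the Hopf-Lax representation~\eqref{Hopf} together with the properties of $H^*$ collected in Section~\ref{subsec:HJ}. For $(i)$, I note that for each fixed $y\in\R^N$ the map $x\mapsto T\cdot H^*\bigl(\tfrac{x-y}{T}\bigr)$ has Hessian $\tfrac{1}{T}\,D^2H^*\bigl(\tfrac{x-y}{T}\bigr)$, which by~\eqref{low-bound-hess-legendre} is bounded above by $\tfrac{1}{\alpha T}\,\mathbb{I}_N$; equivalently, $x\mapsto T\cdot H^*\bigl(\tfrac{x-y}{T}\bigr)-\tfrac{|x|^2}{2\alpha T}$ is concave for every $y$, and remains so after adding the constant $u_0(y)$. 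Taking the pointwise infimum over $y$ preserves concavity, hence $S_Tu_0(x)-\tfrac{|x|^2}{2\alpha T}$ is concave, giving $(i)$.

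For $(ii)$, given $x,z\in\R^N$ and a minimizer $y_z$ of the Hopf-Lax problem at $z$ (whose existence is guaranteed by the coercivity~\eqref{suplin-legendre} of $H^*$), I test the infimum defining $S_Tu_0(x)$ at the translated point $y^{*}:=y_z+(x-z)$: since $\tfrac{x-y^{*}}{T}=\tfrac{z-y_z}{T}$, the $H^*$-term is unchanged and
\[
S_Tu_0(x)-S_Tu_0(z)\,\leq\, u_0\bigl(y_z+(x-z)\bigr)-u_0(y_z)\,\leq\, M\,|x-z|.
\]
Swapping $x$ and $z$ then yields $\Lip[S_Tu_0]\leq M$.

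Property $(iii)$ is the point I expect to require the most care, since it amounts to controlling the location of the Hopf-Lax minimizer. The upper bound $S_Tu_0(x)\leq -T\cdot H(0)$ for $x\notin[-L,L]^N$ is immediate by choosing $y=x$ in~\eqref{Hopf}, since $u_0(x)=0$ and $T\cdot H^*(0)=-T\cdot H(0)$ by~\eqref{HH*0}. For the matching lower bound I restrict to points $x$ outside $[-l_{[L,M,T]},l_{[L,M,T]}]^N$ at which $S_Tu_0$ is differentiable, a dense subset by the local semiconcavity from $(i)$ combined with Theorem~\ref{pro-semiconc}-$(ii)$. At each such $x$, Proposition~\ref{Pro}-$(ii)$ identifies the unique Hopf-Lax minimizer as $y_x=x-T\cdot\nabla H\bigl(\nabla_{\!x}S_Tu_0(x)\bigr)$, and $(ii)$ forces $|\nabla_{\!x}S_Tu_0(x)|\leq M$, hence $|x-y_x|\leq T\cdot\sup_{|p|\leq M}|\nabla H(p)|$. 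If some coordinate $x_i$ satisfies $|x_i|>l_{[L,M,T]}$, the corresponding coordinate of $y_x$ then satisfies $|(y_x)_i|>L$, so $y_x\notin[-L,L]^N$ and $u_0(y_x)=0$; since~\eqref{HH*-grad} combined with $\bf(H2)$ yields $\nabla H^*(0)=0$, so that $0$ is a global minimizer of $H^*$, one concludes
\[
S_Tu_0(x)\,=\,T\cdot H^*\!\Bigl(\tfrac{x-y_x}{T}\Bigr)\,\geq\, T\cdot H^*(0)\,=\,-T\cdot H(0).
\]
The two bounds agree on a dense subset of the open complement of $[-l_{[L,M,T]},l_{[L,M,T]}]^N$, so the equality $S_Tu_0+T\cdot H(0)\equiv 0$ extends to the entire complement by continuity of $S_Tu_0$, yielding $(iii)$.
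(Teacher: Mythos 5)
Your proof is correct. The overall approach coincides with the paper's: for $(iii)$ both arguments work at points of differentiability of $S_Tu_0$ outside $[-l_{[L,M,T]},l_{[L,M,T]}]^N$, use Proposition~\ref{Pro}-$(ii)$ to locate the Hopf--Lax minimizer $y_x=x-T\nabla H(\nabla_{\!x}S_Tu_0(x))$, invoke $(ii)$ to show $y_x\notin[-L,L]^N$, and then conclude by continuity. There are two small variations. For $(i)$ and $(ii)$ you give direct self-contained arguments from the Hopf--Lax formula (concavity is preserved under pointwise infimum; a translated test point), where the paper simply cites \cite[Corollary~1.6.2]{CS} and \cite[Theorem~1.3.2]{CS}; the content is the same. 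For $(iii)$ you obtain the final value $-T\cdot H(0)$ by sandwiching: the upper bound from testing $y=x$, and the lower bound from the global minimality of $H^*$ at $0$ (which follows from {\bf(H2)}, \eqref{HH*-grad} and convexity of $H^*$). The paper instead evaluates the first-order optimality condition $\nabla H^*\bigl(\frac{x-y_x}{T}\bigr)=\nabla u_0(y_x)=0$, inverts via \eqref{HH*-grad} to get $\nabla_{\!x}S_Tu_0(x)=0$, and substitutes. Both are two-line consequences of the same facts, so this is a presentational rather than a substantive difference.
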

\smallskip
\begin{proof}
Under the assumption {\bf (H1)} and recalling~\eqref{low-bound-hess-legendre},
property $(i)$ is well-known (see~\cite[Corollary~1.6.2]{CS}),
while $\Lip[u_0]\leqslant M$
and an application of  \cite[Theorem~1.3.2]{CS} implies
\[
|S_T u_0 (y)-S_T u_0 (x)|\leqslant  M\,|y-x|,\quad\forall x,y\in\mathbb{R}^N,
\]
which yields $(ii)$.
\medskip

Concerning a proof of $(iii)$, by
the Lipschitz continuity of $S_T u_0$
it will be sufficient to show that at every point
$x\in\mathbb{R}^N\backslash [-l_{[L,M,T]},\,l_{[L,M,T]}]^N$ where $S_T u_0$ is differentiable there holds
\begin{equation}\label{eq:constant}
S_T u_0 (x)=-T\cdot H(0).
\end{equation}
Indeed, recalling~\eqref{hopf-lax-smgr} and invoking Proposition~\ref{Pro}-$(ii)$, we find  that
at every such point $x$ one has
\begin{equation}
\label{hopfu0}
S_T u_0 (x)=T\cdot H^*\left(\nabla H\big(\nabla S_T u_0 (x)\big)\right)+u_0(y_x)\,,
\end{equation}
where 
\begin{equation}\label{CL}
y_x=x-T\cdot \nabla H\big(\nabla S_T u_0 (x)\big)\,.
\end{equation}
Observe now that, relying on the property $(ii)$ above established and recalling (\ref{LT}), we deduce that
$y_x\in \mathbb{R}^N\backslash[-L,L]^N$ for all $x\in\mathbb{R}^N\backslash [-l_{[L,M,T]},l_{[L,M,T]}]^N$.
This, in turn, implies 
\begin{equation}
\label{u0-cond}
u_0(y_x)=0,\qquad\quad\nabla u_0(y_x)=0,
\end{equation}
because
$\mathrm{supp}(u_0)\subset [-L,L]^N$ by the definition~\eqref{Cclass} of the set $\mathcal{C}_{[L,M]}$.
Moreover, since by Proposition~\ref{Pro}-$(ii)$ $y_x$ is a minimum of
$$
y\mapsto T\cdot H^*\Big(\frac{x-y}{T}\Big)+u_0(y)
$$
over $\mathbb{R}^N$,
it follows that  
$-\nabla H^*\big(\frac{x-y_x}{T}\big)=\nabla u_0(y_x)$.
Hence, relying on~\eqref{CL}, \eqref{u0-cond}, we deduce that 
\begin{equation}
\label{u0-cond2}
\nabla H^*\big(\nabla H\big(\nabla S_T u_0 (x)\big)\big)=\nabla H^*\Big(\frac{x-y_x}{T}\Big)=0.
\end{equation}
Thus, by virtue of~\eqref{hopfu0}, \eqref{u0-cond}, \eqref{u0-cond2}, we conclude that
$\nabla S_T(u) (x)=0$  at every point
$x\in\mathbb{R}^N\backslash [-l_{[L,M,T]},\,l_{[L,M,T]}]^N$ where $S_T u_0$ is differentiable.
 This, in turn, by the assumption {\bf (H2)} and because of~\eqref{hopfu0}, \eqref{u0-cond},
 implies that at every such point $x$ there holds
\begin{equation}
\label{u0-cond3}
S_T u_0 (x)=T\cdot H^*(\nabla H(0)) = T \cdot H^*(0).
\end{equation}
Finally, recalling~\eqref{HH*0}, we recover 
\eqref{eq:constant} from~\eqref{u0-cond3}, thus completing the proof of~$(iii)$.
\end{proof}
\smallskip

\begin{remark}\rm
Property $(iii)$ of Lemma~\ref{ProV}  implies that, for every $u_0\in \mathcal C_{[L,M]}$, the domain \linebreak
$\mathrm{supp}\big(S_t u_0+t\cdot H(0)\big)$
where $S_t u_0$ 
differs from the constant in  space solution with zero initial data propagates at a finite speed 
as illustrated in Figure~1 below.

\setlength{\unitlength}{1mm}
\begin{picture}(150,55)(-30,-5)
\linethickness{1pt}
\put (2,15){\vector(1,0){85}} 
\put (40,5){\vector(0,1){40}}
{\Cbe \qbezier(30,15)(40,30)(50,15)}
{\Cr\qbezier(20,10)(25,11)(30.2,25)}
{\Cr\qbezier(29,25)(32,35)(37.5,38)}
{\Cr\qbezier(36.5,38)(43,35)(47.2,25)}
{\Cr\qbezier(46,25)(51,11)(60,10)}
\put (60,10){\vector(1,0){20}} 
\put(16,10){\line(-1,0){20}}
\put(27,20){\makebox(0,0)[b]{$u_0$}}
\put(37,42){\makebox(0,0)[b]{$u$}}
\put (49,15){\Cbe\vector(1,0){3}} 
\put (60,15){\vector(0,-1){5}} 
\put(24,30){\makebox(0,0)[b]{$S_T u_0$}}
\put(80,18){\makebox(0,0)[b]{$x$}}
\put(37,10){\makebox(0,0)[b]{$0$}}
\put(46,17){\makebox(0,0)[b]{$L$}}
\put(63,17){\makebox(0,0)[b]{$L_T$}}
\put(5,5){\makebox(0,0)[b]{$-T\cdot H(0)$}}
\put(40,-3){\makebox(0,10)[b]{${\rm Figure \; 1:
evolution \;of \;the\; support\; of\; the\; gradient\; under} \;S_T .$}}
\end{picture}
\\
Having in mind the a-priori bound established in~\cite{DLG} for the support of solutions to scalar conservation laws with convex
flux, one may wonder whether is it
possible to derive a sharper estimate on the size
of such a domain. 
In fact, if we consider a class of initial data 
\begin{equation}
\label{CLmM}
\mathcal C_{[L,m,M]}:=\Big\lbrace{u_0\in \Lip(\mathbb{R}^N)\ \big|\ \mathrm{supp}(u_0)\subset [-L,L]^N,
\
\|\nabla u_0\|_{\mathbf{L}^1(\mathbb{R}^N)}\leqslant m\,, 
\Lip[u_0]\leqslant M\Big\rbrace},
\end{equation}
one may look for establishing an estimate as 
\begin{equation}
\label{supp-m-bound}
\big|\mathrm{supp}\big(S_T u_0+T\cdot H(0)\big)\big|\leq 
\bigg(2L+\sup_{|p|\leqslant M}\|D^2H(p)\|\cdot 4\sqrt{\frac{m\, T}{\alpha}\,}\,\bigg)^{\!\!N}
\end{equation}
relying on property $(i)$ of Lemma~\ref{ProV} and property $(iii)$ of Theorem~\ref{pro-semiconc}. 
However, a key point in the proof of an estimate of this
type for the support of solutions to scalar conservation laws is the fact that, for such equations,
the $\mathbf{L}^1$-norm of the solution is non increasing in time as a consequence of the $\mathbf{L}^1$ contractivity
of the semigroup map $S_t$. This property continues to hold for the gradient of solutions to
Hamilton-Jacobi equations in one space dimension, but it is no more true in general when the space
dimension is greater than one. In fact in this case, as observed in the introduction, the gradient of
a solution of an Hamilton-Jacobi equation turns out to be a solution of an hyperbolic system of conservation laws
and it is well-known that for general hyperbolic systems of conservation laws no metric is contractive~\cite{T}.
As a consequence, one can easily convince himself that a bound as~\eqref{supp-m-bound} doesn't hold
for Hamilton-Jacobi equations in several space variables.
This is the main reason for which we limit ourself to analyze in this paper the image through
the Hopf-Lax semigroup~$S_t$ of sets of  initial data of the form~\eqref{Cclass}
and we don't consider  sets of the form~\eqref{CLmM}.
\end{remark}
\medskip

Given any $L,M,K>0$, consider now the class of functions
\begin{equation}
\label{semiconc-def}
\mathcal{SC}_{[L,M,K]} :=
\Big\{
u\in \mathcal C_{[L,M]}\, \big| \, u \ \text{is semiconcave with semiconcavity constant} \ K\Big\},
\end{equation}
where  $\mathcal C_{[L,M]}$ denotes the set in~\eqref{Cclass}.
Then, applying Lemma~\ref{ProV}, we immediately obtain the following.
\begin{proposition}\label{Com-entro} Let $H:\R^N\to \R$ be  a function satisfying  the assumptions~{\bf (H1)-(H2)}
and $\{S_t : \Lip (\mathbb{R}^N) \to \Lip (\mathbb{R}^N)\}_{t\geqslant 0}$ be the semigroup of viscosity solutions generated by~\eqref{HJ}. 
Then, given  any $L,M,T>0$, there holds
\begin{equation}\label{Inclusion1}
S_{T}\big(\mathcal C_{[L,M]}\big)+T\cdot H(0)\subset \mathcal{SC}_{\big[l_{[L,M,T]},\,M,\,\frac{1}{\alpha T}\big]},
\end{equation}
where $l_{[L,M,T]}$ is given by~\eqref{LT} and $\alpha$ is
the constant  in {\bf (H1)}.
\end{proposition}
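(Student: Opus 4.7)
The plan is to observe that the proposition is essentially a direct repackaging of Lemma~\ref{ProV}, modulo the trivial observation that adding the constant $T\cdot H(0)$ preserves Lipschitz constants and semiconcavity, and modifies only the support.

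First I would fix an arbitrary $u_0\in \mathcal C_{[L,M]}$ and set $v:=S_T u_0+T\cdot H(0)$. The goal reduces to checking the three conditions that define $\mathcal{SC}_{[l_{[L,M,T]},M,1/(\alpha T)]}$: that $\mathrm{supp}(v)\subset [-l_{[L,M,T]},l_{[L,M,T]}]^N$, that $\Lip[v]\leqslant M$, and that $v$ is semiconcave with constant $1/(\alpha T)$. The support inclusion is exactly the content of Lemma~\ref{ProV}-$(iii)$, applied verbatim. For the Lipschitz bound, since $v$ differs from $S_T u_0$ by an additive constant, one has $\Lip[v]=\Lip[S_T u_0]\leqslant M$ by Lemma~\ref{ProV}-$(ii)$. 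Finally, for the semiconcavity, the inequality
\[
v(x+h)+v(x-h)-2v(x)=S_T u_0(x+h)+S_T u_0(x-h)-2 S_T u_0(x)\leqslant \tfrac{1}{\alpha T}|h|^2
\]
holds by Lemma~\ref{ProV}-$(i)$, so $v$ is semiconcave with constant $1/(\alpha T)$ as required.

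Combining these three observations gives $v\in \mathcal{SC}_{[l_{[L,M,T]},M,1/(\alpha T)]}$, and since $u_0$ was arbitrary in $\mathcal C_{[L,M]}$, this establishes the inclusion~\eqref{Inclusion1}. There is no genuine obstacle here: the content of the statement is that Lemma~\ref{ProV} can be read off as a single set-theoretic inclusion once the constant $T\cdot H(0)$ is absorbed. The only (minor) point worth flagging in the write-up is the invariance of $\Lip[\,\cdot\,]$ and of the semiconcavity constant under addition of a real constant, which is immediate from their definitions.
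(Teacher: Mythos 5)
Your proof is correct and follows exactly the route the paper intends: the paper states that Proposition~\ref{Com-entro} follows ``immediately'' from Lemma~\ref{ProV}, and your write-up simply spells out the three checks (support via Lemma~\ref{ProV}-$(iii)$, Lipschitz bound via $(ii)$, semiconcavity via $(i)$) together with the trivial observation that adding the constant $T\cdot H(0)$ does not affect the Lipschitz or semiconcavity constants. There is nothing to add.
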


\subsection{An upper bound on the $\varepsilon$-entropy for semiconcave functions}
Towards a derivation of an upper bound on the $\varepsilon$-entropy in $\mathbf{W}^{1,1}$ for the class of semiconcave functions
introduced in~\eqref{semiconc-def}, in view of  Proposition~\ref{prop2}-$(ii)$ we shall first establish
an upper bound on the $\varepsilon$-entropy in $\mathbf{L}^1$ for a class of monotone multifunctions 
with uniformly bounded total variation defined on a cube of $\R^N$.
As observed in Section~\ref{subsec:semiconc-monotone}, any monotone multifunction is
almost everywhere univalued in the interior of its domain, and can be regarded as a
function of bounded variation on this set.
Hence, set $I_L:=  \,]-L,L[$,
$I_M:= \,]-M,M[$, and
consider the class of monotone multifunction
\begin{equation}
\label{monotone-class}
\mathcal{F}_{[L,M,C]} := \Big\lbrace{F:I_L^N\rightarrow 2^{I_M^N}\ \big|\ \text{dom}(F)=I_L^N,
\ \ F \ \text{is decreasing}, \  \ |DF|(I_L^N)
\leqslant C\Big\rbrace},
\end{equation}
where $ |DF|$ denotes the total variation of the matrix-valued Radon measure DF.
With a slight abuse of notation, we shall regard $\mathcal{F}_{[L,M,C]}$
as a subset of $\mathbf{L}^1\big(I_L^N, I_M^N\big)$ consisting of all functions in $\mathbf{L}^1\big(I_L^N, I_M^N\big)$
that coincide almost everywhere  with an element of the set defined in~\eqref{monotone-class}.
\begin{proposition}\label{Entropy1}
Given $L,M>0$, 
for  any $\varepsilon >0$ sufficiently small
there holds
\begin{equation}
\label{monotone-upbound}
\mathcal{H}_{\epsilon}\Big(\mathcal{F}_{[L,M,C]}\ |\ \mathbf{L}^1\big(I_L^N, I_M^N\big)\Big)\leqslant \gamma_{_{[L,M,C,N]}}\cdot\frac{1}{\varepsilon^N},
\end{equation}
where 
\begin{equation}
\label{Glmcn-def}
\gamma_{_{[L,M,C,N]}} := 2^{(N^2+N+1)}\,N^{\big(\frac{N}{2}+1\big)}\cdot\big(M L^N+LC\big)^N.
\end{equation}
\end{proposition}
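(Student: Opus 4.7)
My plan is to build an explicit $\varepsilon$-cover of $\mathcal F_{[L,M,C]}$ in $\mathbf L^1(I_L^N, I_M^N)$ by piecewise-constant quantized profiles and then bound the cardinality of that cover using the monotonicity/BV structure.

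Fix mesh and quantization parameters $h,\delta>0$ to be chosen below. Partition $I_L^N$ into the uniform mesh of $(2L/h)^N$ open sub-cubes $\{Q_j\}$ of side $h$ and fix a grid of $(2M/\delta)^N$ nodes in $I_M^N$. Given $F\in\mathcal F_{[L,M,C]}$, let $F_{Q_j}$ be the mean of $F$ on $Q_j$, set $\bar F:=\sum_j F_{Q_j}\chi_{Q_j}$, and let $\tilde F:=\sum_j\tilde F_{Q_j}\chi_{Q_j}$, where $\tilde F_{Q_j}$ is the nearest grid node to $F_{Q_j}$ in $I_M^N$. Applying Theorem~\ref{poincare}-(ii) componentwise on each $Q_j$ and summing yields
\[
\|F-\bar F\|_{\mathbf L^1}\leq \frac{\sqrt{N}\,h}{2}\,|DF|(I_L^N)\leq \frac{\sqrt{N}\,h\,C}{2},
\]
while the rounding step costs at most
\[
\|\bar F-\tilde F\|_{\mathbf L^1}\leq \frac{\sqrt{N}\,\delta}{2}\,(2L)^N.
\]
Choosing $h=\varepsilon/(\sqrt{N}\,C)$ and $\delta=\varepsilon/(\sqrt{N}\,(2L)^N)$ forces each contribution to be $\leq\varepsilon/2$, so $\|F-\tilde F\|_{\mathbf L^1}\leq\varepsilon$. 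Thus the set of quantized profiles $\tilde F$ that arise in this way already forms an $\varepsilon$-cover of $\mathcal F_{[L,M,C]}$, and it only remains to bound its cardinality.

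The heart of the proof is this cardinality estimate. Set $n:=2L/h$ and $m:=2M/\delta$. A naive count (ignoring monotonicity) would give at most $m^{Nn^N}$ profiles, whose logarithm carries an unwanted factor $\log(1/\varepsilon)$ that would spoil the target $\varepsilon^{-N}$ bound. To kill this log I would exploit monotonicity as follows. The averaged values $(F_{Q_j})_j$ inherit from $F$ a discrete version of the inequality $\langle v_2-v_1,x_2-x_1\rangle\leq 0$, and the BV budget $|DF|(I_L^N)\leq C$ (supplied by Proposition~\ref{BV-bound} together with the definition of $\mathcal F_{[L,M,C]}$) constrains the jumps of $\tilde F$ across neighbouring cubes. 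Parametrising $\tilde F$ by its interior face-jumps and using a stars-and-bars style count (which in one dimension reduces to the bound $\binom{n+m-1}{n}\leq 2^{n+m}$ on non-increasing integer sequences) should produce a total count of the form $2^{c_N(ML^N+LC)^N/\varepsilon^N}$ for a suitable dimensional constant $c_N$: the $ML^N$ summand reflects the $\mathbf L^\infty$-budget (size of the value grid) while $LC$ reflects the BV budget (total jump length). Inserting the chosen $h$ and $\delta$ and tracking the dimensional constants yields the claimed bound $\gamma_{[L,M,C,N]}\,\varepsilon^{-N}$ with $\gamma_{[L,M,C,N]}$ as in~\eqref{Glmcn-def}.

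The main obstacle is this final combinatorial step. Vector monotonicity in the sense $\langle v_2-v_1,x_2-x_1\rangle\leq 0$ is strictly weaker than coordinate-wise monotonicity, so one cannot simply apply the one-dimensional binomial count on axis-parallel slices; instead one must treat the face-jumps of $\tilde F$ as signed vector increments whose total $\ell^1$-length is controlled by $C$ and count the integer lattice points satisfying this constraint inside the $m$-grid. Producing the precise dimensional constants $2^{N^2+N+1}$ and $N^{N/2+1}$ in~\eqref{Glmcn-def} then requires careful bookkeeping when balancing the three budgets (space mesh, value grid, total-variation budget) against one another in the final exponent.
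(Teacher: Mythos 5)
Your overall strategy (average on a mesh, round to a grid, bound the error with Poincar\'e, then count the quantized profiles) is indeed the same as the paper's, but the combinatorial step is not carried out and the reason you give for why the natural slice-by-slice count would fail is incorrect. You write that ``vector monotonicity \dots is strictly weaker than coordinate-wise monotonicity, so one cannot simply apply the one-dimensional binomial count on axis-parallel slices.'' In fact, vector monotonicity implies \emph{exactly} the coordinate-wise property you need: testing the inequality $\langle v_2-v_1, x_2-x_1\rangle\leqslant 0$ with $x_2-x_1=t\,e_i$, $t>0$, shows that the $i$-th component of $F$ is non-increasing along the $i$-th direction (though it may oscillate in the other $N-1$ directions). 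The paper establishes that this property is inherited by the cell averages $\overline F_\iota$: for each $i$ and each fixed ``column'' of cubes along $e_i$, the $i$-th component of $\widetilde F$ is non-increasing, see its equation \eqref{increasing1} and the computation leading to it. That is all that is needed. One then counts the $N$ components separately; for each fixed $i$, the discrete function $\widetilde F^i$ is constrained only along $e_i$-columns, so on each of the $n^{N-1}$ columns its restriction is a non-increasing sequence of $n$ values drawn from the value grid, and the binomial bound $\binom{2n}{n}\leqslant 2^{2n}$ applies per column. The total is $\leqslant 2^{2Nn^N}$. This is precisely the paper's count.

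Two further remarks. First, the paper uses a single discretization parameter $n$ both for the mesh ($n$ cubes per direction) and for the value grid ($n$ grid levels per coordinate in $I_M$), which is what makes the binomial $\binom{2n}{n}\leqslant 2^{2n}$ drop out cleanly and avoid the logarithmic factor you were worried about; your choice of two independent parameters $h$ and $\delta$ can be made to work as well (using $\binom{n+m-1}{n}\leqslant 2^{n+m}$), but then the bookkeeping to land on the specific constant in \eqref{Glmcn-def} becomes harder. Second, the BV budget $C$ enters only through the Poincar\'e step to control the averaging error; it plays no role in the combinatorial count, so the ``$\ell^1$ face-jump budget'' count you sketch as an alternative is both unnecessary and, as written, incomplete. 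Correcting the misconception about vector monotonicity and replacing the last paragraph with the per-component, per-column binomial count would close the gap.
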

\begin{proof}
\quad\\
{\bf 1.} Towards a proof of~\eqref{monotone-upbound}, we shall associate to any 
function $F\in \mathcal{F}_{[L,M,C]}$, 
a piecewise constant function $\widetilde F\in \mathbf{L}^{\infty}\big(I_L^N, I_M^N\big)$
that takes values in a discrete subset of~$I_M^N$
and has the property that every $i$-th component is (almost everywhere) monotone decreasing in the $i$-th variable.  
Namely, given any fixed $n\in\mathbb{N}$, we divide $]\!-\!L,L[^N$ into $n^N$ cubes with sides 
of length~$\frac{2L}{n}$ as follows. For every multiindex $\iota=(\iota_1,...,\iota_N)\in\lbrace{0,...,n-1\rbrace}^N$
we define the cube
%
\begin{equation*}
\label{cube-def}
\square_{\iota}
:= 
\,\bigg]\!-\!L+\frac{\iota_1}{n}2L,\, -L+\frac{\iota_1+1}{n}2L\bigg[\,\times \cdots\times\,
\bigg]\!-\!L+\frac{\iota_N}{n}{2L},\, -L+\frac{\iota_N+1}{n}2L\bigg[\,,
\end{equation*}
so that one has
\begin{equation}
\label{cube-decomp}
{\overline I}_L^N=\bigcup_{\iota\in\lbrace{0,...,n-1\rbrace}^N} \overset{\text{---}}{\square}_{\iota},
\end{equation}
where ${\overline I}_L=[-L, L]$, and $\overset{\text{---}}{\Box}_{\iota}$ denotes the closure of $\Box_{\iota}$.
Then, given any $F\in\mathcal{F}_{[L,M,C]}$, for every $\iota\in \lbrace{0,...,n-1\rbrace}^N$, let
\begin{equation}
\label{F-average-def}
\overline{F}_{\iota}:= 
\frac{1}{\mathrm{Vol}(\Box_{\,\iota})}\cdot\int_{\square_{\,\iota}}F(x)dx
\end{equation}
be the average of $F$ over $\Box_{\iota}$. 
Observe that $\overline{F}_{\iota}=(\overline{F}^1_{\iota},...,\overline{F}^N_{\iota})\in I_M^N$
since $F$ takes values in $I_M^N$.
Next,
consider the subdivision of $[-M,M]$ into the $n$ intervals
\[
[-M,M]=\Big[\!-\!M,\, \!-\!M+{\textstyle\frac{2M}{n}}\Big[\cup\cdots\cup\Big[\!-\!M+{\textstyle\frac{2(n-2)}{n}}M,\, \!-\!M+
{\textstyle\frac{2(n-1)}{n}}M\Big[\cup
\Big[\!-\!M+{\textstyle\frac{2(n-1)}{n}}M,M\Big],
\]
and
define the vector $\widetilde{F}_{\iota}=(\widetilde{F}^1_{\iota},...,\widetilde{F}^N_{\iota})\in I_M^N$
by setting, for each $i\in\{1,\dots,N\}$,
\begin{equation}
\label{Ftilde-def1}
\widetilde{F}_{\iota}^i:= 
\begin{cases}
\!-\!M\!+\big(k+\frac{1}{2}\big)\cdot\frac{2M}{n},
\quad
&\text{if}\qquad\overline{F}^i_{\iota}\in \Big[\!-\!M\!+2k\cdot\frac{M}{n},\, \!-M\!+2(k+1)\cdot\frac{M}{n}\Big[,
\qquad k \leqslant n-2,
\\
\noalign{\smallskip}
\!-\!M\!+\big(n-\frac{1}{2}\big)\cdot\frac{2M}{n},
\quad
&\text{if}\qquad
\overline{F}^i_{\iota}\in \Big[\!-\!M\!+(n-1)\cdot\frac{M}{n}, M\Big]\,.
\end{cases}
\end{equation}
Then, let $\widetilde F\in \mathbf{L}^{\infty}\big(I_L^N, I_M^N\big)$
be the function defined almost everywhere
by
\begin{equation}
\label{Ftilde-def2}
\qquad
\widetilde{F}(x):=\widetilde{F}_{\iota}
\qquad\mathrm{if}\qquad x\in \square_{\iota}\,, \
\iota\in\lbrace{0,...,n-1\rbrace}^{N}\,,
\end{equation}
with $\widetilde{F}_{\iota}=(\widetilde{F}^1_{\iota},...,\widetilde{F}^N_{\iota})$ as in~\eqref{Ftilde-def1}.
By construction, we have
\begin{equation}
\label{ftilde-range}
\widetilde{F}(x)\in J_{M,n}:=
\bigg\lbrace{\!-\!M\!+\Big(k+\frac{1}{2}\Big)\cdot\frac{2M}{n}\ \bigg|\
k= 0, \dots, n-1
\bigg\rbrace}\qquad\forall~x\in\bigcup_{\iota}\, {\square}_{\iota}\,.
\end{equation}
We claim that $F$ enjoys the following two properties:
\begin{itemize}
\item[$(i)$] For every $\iota=(\iota_1,...,\iota_N)\in\lbrace{0,...,n-1\rbrace}^{N}$ 
there holds
\begin{equation}
\label{increasing1}
\iota_i<n-1\quad
\Longrightarrow\quad
\widetilde{F}^{i}_{\iota}\geqslant
\widetilde{F}^{i}_{\iota+e_i}\,,  
\end{equation}
where $e_i$ denotes the $i$-th element of the canonical basis of $\R^N$.
\item[$(ii)$]
\begin{equation}
\label{est2}
\big\|\widetilde{F}-F\big\|_{\mathbf{L}^1(I_L^N, I_M^N)}\leqslant \gamma^1_{_{[L,M,C,N]}}\cdot\frac{1}{n}
\end{equation}
where 
\begin{equation}
\label{gamma1-def}
 \gamma^1_{_{[L,M,C,N]}} := {\sqrt{N}}\Big(M(2L)^N+LC\Big).
\end{equation}
\end{itemize}
In fact, given any $\iota=(\iota_1,...,\iota_N)\in\lbrace{0,...,n-1\rbrace}^{N}$
and $\iota_i<n-1$,  by definition~\eqref{F-average-def} 
and since $F$ is monotone decreasing
we find
\begin{equation}
\label{increasing}
\begin{aligned}
\overline{F}^i_{\iota+e_i}-\overline{F}^i_{\iota}&=
\big\langle \overline{F}_{\iota+e_i}-\overline{F}_{\iota}, e_i\big\rangle
\\
\noalign{\medskip}
&=\Bigg\langle
\frac{n^N}{(2L)^N}\cdot\int_{\square_{\iota}}\bigg(F\Big(x+{\textstyle\frac{2L}{n}}e_i\Big)-F(x)\bigg)dx,\, e_i
\Bigg\rangle
\\
\noalign{\medskip}
&=\frac{n^{N+1}}{(2L)^{N+1}}\cdot\int_{\square_{\iota}}\bigg\langle F\Big(x+{\textstyle\frac{2L}{n}}e_i\Big)-F(x),\, x+{\textstyle\frac{2L}{n}}e_i-x\bigg\rangle dx
\\
\noalign{\medskip}
&\leq 0.
\end{aligned}
\end{equation}
 By definition~\eqref{Ftilde-def1},  \eqref{increasing}  implies, in turn, $\widetilde{F}^{i}_{\iota+e_i}\leqslant
\widetilde{F}^{i}_{\iota}$, thus proving~\eqref{increasing1}.
Concerning~\eqref{est2}, observe first that,   
by definition~\eqref{F-average-def} 
and  relying on the Poincar\'e inequality for BV functions stated in Theorem~\ref{poincare},
for any $\iota=(\iota_1,...,\iota_N)\in\lbrace{0,...,n-1\rbrace}^{N}$
we derive
\begin{equation}
\label{Poincare}
\big\|F-\overline{F}_{\iota}\big\|_{\mathbf{L}^1(\square_{\iota}, I_M^N)}\leq\frac{L\sqrt{N}}{n}\cdot |DF|(\square_{\iota}).
\end{equation}
On the other hand, since~\eqref{Ftilde-def1} implies
 \[
\big|\widetilde{F}_{\iota}-\overline{F}_{\iota}\big|\leq\frac{\sqrt{N} M}{n},
 \]
it follows that
\begin{equation}
\label{ftilde-est3}
\big\|\widetilde{F}-\overline{F}_{\iota}\big\|_{\mathbf{L}^1(\square_{\iota}, I_M^N)}\leq\frac{\sqrt{N}M\cdot (2L)^N}{n^{N+1}}.
\end{equation}
Combining the estimates~\eqref{Poincare}, \eqref{ftilde-est3}, 
and observing that, by definition~\eqref{monotone-class},
$F\in \mathcal{F}_{[L,M,C]}$ implies $|DF|(I_L^N)\leqslant C$, 
we obtain
\begin{equation}
\label{ftilde-est4}
\begin{aligned}
\big\|\widetilde{F}-F\big\|_{\mathbf{L}^1(I_L^N, I_M^N)}&\leqslant 
\sum_{\iota}\Big(\big\|\widetilde{F}-\overline{F}_{\iota}\big\|_{\mathbf{L}^1(\square_{\iota}, I_M^N)}+
\big\|F-\overline{F}_{\iota}\big\|_{\mathbf{L}^1(\square_{\iota}, I_M^N)}
\Big)
\\
\noalign{\medskip}
&\leqslant \frac{\sqrt{N}M\cdot (2L)^N}{n}+\frac{L\sqrt{N}}{n}\cdot |DF|(I_L^N)
\\
\noalign{\medskip}
&\leqslant \frac{\sqrt{N}}{n}\Big(M(2L)^N+LC\Big),
\end{aligned}
\end{equation}
proving~\eqref{est2}.
\\
\quad\\
{\bf 2.} We introduce now a set of piecewise constant functions sharing the properties~\eqref{ftilde-range}, \eqref{increasing1}
of $\widetilde F$. Namely, letting $J_{M,n}$ be the set in~\eqref{ftilde-range}, we define
\begin{equation}
\label{G-set}
\begin{aligned}
\mathcal{G}_n:=\bigg\{
G\in \mathbf{L}^{\infty}\big(I_L^N, J_{M,n}^N\big)\ \Big|\ \ &G \ \text{is constant on every } \ \square_{\iota}, \
\iota=(\iota_1,...,\iota_N)\in\lbrace{0,...,n-1\rbrace}^{N}
\\
&\text{and} \qquad\quad \iota_i<n-1\quad
\Longrightarrow\quad
G^{i}_{\iota}\geqslant G^{i}_{\iota+e_i} \bigg\},
\end{aligned}
\end{equation}
where $G_{\iota}$ stands for the value of $G$ on the cube $\square_{\iota}$.
Observe that for every $F\in \mathcal{F}_{[L,M,C]}$, letting $\widetilde F$
be the map defined in~\eqref{Ftilde-def2}, one has $\widetilde F\in \mathcal{G}_n$.
Moreover, setting for any given $G\in \mathcal{G}_n$
\begin{equation}
\label{U-cover}
U(G):=\bigg\{F\in \mathbf{L}^1\big(I_L^N, I_M^N\big)\ \Big| \
 \big\|F- G\big\|_{\mathbf{L}^1} \leqslant   \gamma^1_{_{[L,M,C,N]}}\cdot\frac{1}{n}
\bigg\},
\end{equation}
with $\gamma^1_{_{[L,M,C,N]}}$ as in~\eqref{gamma1-def}, because of~\eqref{est2} we have that
$F\in \mathcal{F}_{[L,M,C]}$ implies $F\in U(\widetilde F)$.
Hence, the set
\begin{equation*}
\mathcal{U}:=\Big\{U(G)\ \big|\ G\in \mathcal{G}_n
\Big\}
\end{equation*}
provides an ${\bf L}^1$ covering of $\mathcal{F}_{[L,M,C]}$ with sets of diameter $2 \gamma^1_{_{[L,M,C,N]}}\cdot\frac{1}{n}$.
Thus, 
taking 
\begin{equation}
\label{n-eps-def}
n=\Bigg\lfloor\frac{\gamma^1_{_{[L,M,C,N]}}}{\varepsilon}\Bigg\rfloor+1,
\end{equation}
%
we deduce that 
\begin{equation}
\label{GtildeFbound1}
\mathcal{N}_{\varepsilon}\Big(\mathcal{F}_{[L,M,C]}\ \big|\ \mathbf{L}^1\big(I_L^N, I_M^N\big)\Big)\leqslant 
\text{Card}(\mathcal{G}_n).
\end{equation}
Observe that, given any fixed $\overline i \in\lbrace{1,...,N\rbrace}$,
the set of piecewise constant scalar functions
\begin{equation}
\label{G-set2}
\begin{aligned}
\mathcal{G}_n^{\overline i}:=\bigg\{
g\in \mathbf{L}^{\infty}\big(I_L^N, J_{M,n}\big)\ \Big|\ \ &g \ \text{is constant on every } \ \square_{\iota}, \
\iota=(\iota_1,...,\iota_N)\in\lbrace{0,...,n-1\rbrace}^{N}
\\
&\text{and} \qquad\quad \iota_{\overline i}<n-1\quad
\Longrightarrow\quad
g_{\iota}\geqslant g_{\iota+e_{\overline i}} \bigg\},
\end{aligned}
\end{equation}
defined with the same notations as in~\eqref{G-set}, is independent of the choice of $\overline i$. 
Thus, we deduce from~\eqref{GtildeFbound1} that there holds
\begin{equation}
\label{GtildeFbound2}
\mathcal{N}_{\varepsilon}\Big(\mathcal{F}_{[L,M,C]}\ \big|\ \mathbf{L}^1\big(I_L^N, I_M^N\big)\Big)\leqslant 
\big(\text{Card}(\mathcal{G}_n^{\overline i}\,) \big)^{\!N}.
\end{equation}
Next, we define the set
\[
\mathcal{J} :=\Big\{\iota\in \lbrace{0,...,n-1\rbrace}^N\ \big|\ \iota_{\overline i}=-M\Big\},
\]
which collects all the labels of squares $\Box_{\iota}$ with boundary intersecting the hyperplane $x_{\overline i}=-M$.
Consider the set of decreasing $n$-tuples of elements of the set $J_{M,n}$ in~\eqref{ftilde-range}
\[
\mathcal{K} :=
\bigg\{(a_0, a_1, \dots, a_{n-1})\in (J_{M,n})^n\ \, \Big|\ \,
\!-M\!+\Big(n-{\textstyle\frac{1}{2}}\Big)\cdot{\textstyle\frac{2M}{n}}\geqslant a_0\geqslant\cdots\geqslant a_{n-1}\geqslant 
\!-M\!+{\textstyle\frac{M}{n}}
\bigg\}.
\]
By the definition~\eqref{G-set2} we deduce that
\begin{equation}
\label{cardG-est1}
\text{Card}(\mathcal{G}_n^{\overline i}\,)\leqslant \big(\text{Card}(\mathcal{K})\big)^{\text{Card}(\mathcal{J})}.
\end{equation}
Observe that $\mathcal{K}$ has the same cardinality as the set of decreasing $n$-tuples of nonnegative
integers smaller than $n-1$. By elementary combinatorial arguments it thus follows that, if $n\geq 6$,
one has 
$\text{Card}(\mathcal{K})\leqslant \binom{2n} n\leqslant 2^{2n}$ (e.g.~see~\cite[proof of Lemma 3.1]{DLG}).
Therefore, since $\text{Card}(\mathcal{J})=n^{N-1}$, we derive from~\eqref{GtildeFbound2}, \eqref{cardG-est1}
the upper bound
\begin{equation}
\label{NF-lowbound1}
\mathcal{N}_{\varepsilon}\Big(\mathcal{F}_{[L,M,C]}\ \big|\ \mathbf{L}^1\big(I_L^N, I_M^N\big)\Big)\leqslant 
(2^{2n})^{(N n^{N-1})}=2^{2N n^{N}}.
\end{equation}
Then, for every $0<\varepsilon \leqslant \frac{1}{5}\cdot\gamma^1_{_{[L,M,C,N]}}$, 
with $\gamma^1_{_{[L,M,C,N]}}$ as in \eqref{gamma1-def},
taking $n$ as in~\eqref{n-eps-def} we recover from~\eqref{NF-lowbound1} the estimate
\begin{equation}
\label{NF-lowbound2}
\mathcal{N}_{\varepsilon}\Big(\mathcal{F}_{[L,M,C]}\ \big|\ \mathbf{L}^1\big(I_L^N, I_M^N\big)  \Big)\leqslant 2^{\frac{\gamma_{_{[L,M,C,N]}}}{\varepsilon^{N}}},
\end{equation}
where $\gamma_{_{[L,M,C,N]}}:= 2N\big(2\cdot \gamma^1_{_{[L,M,C,N]}}\big)^{\!N}$. 
In turn, \eqref{NF-lowbound2} yields~\eqref{monotone-upbound}, completing the proof
of the proposition.
\end{proof}
\medskip

Relying on Proposition~\ref{Entropy1} we now establish  an 
upper bound on the $\varepsilon$-entropy in $\mathbf{W}^{1,1}$ for the class of semiconcave functions
introduced in~\eqref{semiconc-def}.
\begin{proposition}\label{estSC}
Given $L,M,K>0$, let $\mathcal{SC}_{[L,M,K]}$
be the set defined in~\eqref{semiconc-def}.
Then, for $\varepsilon>0$ sufficiently small, there holds 
\begin{equation}
\label{upper-entr-sc-1}
 \mathcal{H}_{\varepsilon}\Big(\mathcal{SC}_{[L,M,K]}\ \big|\ \mathbf{W}^{1,1}\big(\mathbb{R}^N\big)\Big)\leqslant 
 \gamma^{\mathcal{SC}}_{_{[L,M,K,N]}}\cdot\frac{1}{\varepsilon^N}
\end{equation}
where
 \begin{equation}\label{Consta1}
 \gamma^{\mathcal{SC}}_{_{[L,M,K,N]}}:=
 \omega_N^N \cdot \Big(4N\cdot\big(1+ M+(K+1)L\big)\Big)^{\!4N^2}\,.
 \end{equation}
 %
\end{proposition}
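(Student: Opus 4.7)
The plan: I would first reduce from the $\mathbf{W}^{1,1}$-entropy of $\mathcal{SC}_{[L,M,K]}$ to the $\mathbf{L}^{1}$-entropy of the gradient family via a Poincar\'e inequality, then represent each gradient (shifted by $Kx$) as a monotone decreasing multifunction using Proposition~\ref{prop2}(ii), and finally invoke the monotone-multifunction entropy bound of Proposition~\ref{Entropy1}.

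First, since $\mathrm{supp}(u)\subset[-L,L]^N$ and $u$ is continuous, $u\equiv 0$ on $\partial I_L^N$ (with $I_L^N=(-L,L)^N$); thus for any $u_1,u_2\in\mathcal{SC}_{[L,M,K]}$, the difference $u_1-u_2$ lies in $\mathbf{W}^{1,1}_0(I_L^N)$. Theorem~\ref{poincare}(i), with $\mathrm{Vol}(I_L^N)^{1/N}=2L$, gives
$$\|u_1-u_2\|_{\mathbf{W}^{1,1}(\R^N)}=\|u_1-u_2\|_{\mathbf{W}^{1,1}(I_L^N)}\leq(1+2L)\,\|\nabla u_1-\nabla u_2\|_{\mathbf{L}^1(I_L^N)},$$
so every $\varepsilon'$-cover of $\{\nabla u|_{I_L^N}:u\in\mathcal{SC}_{[L,M,K]}\}$ in $\mathbf{L}^1$ produces, with the same cardinality, a $(1+2L)\varepsilon'$-cover of $\mathcal{SC}_{[L,M,K]}$ in $\mathbf{W}^{1,1}(\R^N)$.

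Next, for $u\in\mathcal{SC}_{[L,M,K]}$ I would set $F_u(x):=D^+u(x)-Kx$, which by Proposition~\ref{prop2}(ii) is a monotone decreasing multifunction with $\mathrm{dom}(F_u)=\R^N$, and which by Theorem~\ref{pro-semiconc}(ii) agrees almost everywhere with $\nabla u-Kx$. From $|\nabla u|\leq M$ and $|x_i|\leq L$ on $I_L^N$, $F_u|_{I_L^N}$ takes values componentwise in $[-(M+KL),M+KL]$. Applying Proposition~\ref{BV-bound} on $\Omega=I_L^N$ (with $\mathrm{diam}(\Omega)\leq 2L\sqrt{N}$ and $\mathrm{diam}(F_u(\Omega))\leq 2(M+KL)\sqrt{N}$) yields
$$|DF_u|(I_L^N)\leq 2^{N/2+N}\,N^{N/2+2}\,\omega_N\,(M+(K+1)L)^N=:C_{[L,M,K,N]}.$$
Hence $\{F_u|_{I_L^N}:u\in\mathcal{SC}_{[L,M,K]}\}\subset\mathcal{F}_{[L,\,M+KL,\,C_{[L,M,K,N]}]}$, and since $F_{u_1}-F_{u_2}=\nabla u_1-\nabla u_2$ a.e., an $\mathbf{L}^1$-cover of the former is an $\mathbf{L}^1$-cover of the gradient family.

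Applying Proposition~\ref{Entropy1} to this monotone class and combining with Step~1 (taking $\varepsilon'=\varepsilon/(1+2L)$) gives
$$\mathcal{H}_\varepsilon\bigl(\mathcal{SC}_{[L,M,K]}\bigm|\mathbf{W}^{1,1}(\R^N)\bigr)\leq(1+2L)^N\,\gamma_{[L,\,M+KL,\,C_{[L,M,K,N]},\,N]}\cdot\varepsilon^{-N}.$$
The remaining task is bookkeeping: substitute the explicit formula \eqref{Glmcn-def} for $\gamma$ and the expression above for $C_{[L,M,K,N]}$, and verify that the coefficient is bounded by $\omega_N^N\bigl(4N(1+M+(K+1)L)\bigr)^{4N^2}$. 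The main (and only) obstacle is precisely this: the $BV$ bound produces $C_{[L,M,K,N]}\sim\omega_N\,N^{N/2+2}(M+(K+1)L)^N$, and since $\gamma$ depends on $C$ through $(M'L^N+LC)^N$, the combination yields an overall exponent of order $N^2$ on the base $(1+M+(K+1)L)$; the generous $4N^2$ in \eqref{Consta1} is chosen to absorb all numerical prefactors ($\omega_N^N$, the various powers of $2$ and $N$, and the Poincar\'e factor $(1+2L)^N$) into the single base $4N(1+M+(K+1)L)$.
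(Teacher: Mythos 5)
Your proposal is correct and follows essentially the same route as the paper's proof: both pass from the $\mathbf{W}^{1,1}$-entropy to the $\mathbf{L}^1$-entropy of the gradient family via the Poincar\'e inequality for trace-zero functions, realize $D^+u(\cdot)-K\,(\cdot)$ as a monotone decreasing multifunction (the paper phrases this as the superdifferential $D^+f$ of the concave shift $f=u-\tfrac{K}{2}|\cdot|^2$, which is identical to your $F_u$), bound its total variation on $I_L^N$ with Proposition~\ref{BV-bound}, and then invoke Proposition~\ref{Entropy1} before absorbing all constants into the generous exponent $4N^2$.
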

\begin{proof}
Given $L, K>0$, let us define the map $\mathcal{T}_K:  \mathbf{L}^1(I_L^N) \to  \mathbf{L}^1(\R^N)$,
$I_L:=\,]\!-\!L, L[\,$,
that associates to any $f\in \mathbf{L}^1(I_L^N)$ the function
\begin{equation}
\label{Tk-map}
\mathcal{T}_K f(x) := 
\begin{cases}
f(x)+\frac{K}{2}|x|^2\quad &\text{if}\qquad x\in I_L^N\,,
\\
\noalign{\smallskip}
\  0 &\text{otherwise,}
\end{cases}
\end{equation}
and then consider the class of concave functions 
\begin{equation}
\label{C-klm-class}
\mathcal{C}_{[L,M,K]} := \Big\lbrace{f\in\mathbf{W}^{1,1}(I_L^N) \ \big{|}\ \mathcal{T}_K f\in\mathcal{SC}_{[L,M,K]} \Big\rbrace}.
\end{equation}
The definition~\eqref{C-klm-class} must be understood in the sense that a function $f\in \mathbf{W}^{1,1}(I_L^N)$
is an element of $\mathcal{C}_{[L,M,K]}$ if there exists $\overline f$ almost everywhere equal to $f$
such that $\mathcal{T}_K \overline f\in\mathcal{SC}_{[L,M,K]}$.
Notice that, for any $g\in \mathcal{SC}_{[K,L,M]}$, letting $\widetilde g:=\big(f-\frac{K}{2}|\cdot|^2\,\big)\!\!\!\restriction_{I_L^N}$
denote the restriction of $f-\frac{K}{2}|\cdot|^2\,$ to $I_L^N$, 
recalling definitions~\eqref{Cclass}, \eqref{semiconc-def} one has $\widetilde g\in \mathbf{W}^{1,1}(I_L^N) $ and 
$\mathcal{T}_K \widetilde g = g$. Thus, $\mathcal{T}_K$ is a surjective map from $\mathcal{C}_{[L,M,K]}$
into $\mathcal{SC}_{[K,L,M]}$, and hence
every given  $\varepsilon$-covering $\mathcal{B}=\{B_\alpha\}_\alpha$
of $\mathcal{C}_{[K,L,M]}$
in $\mathbf{W}^{1,1}(I_L^N)$ 
yields an $\varepsilon$-cove\-ring $\{\mathcal{T}_K(B_\alpha)\}_\alpha$
of $\mathcal{SC}_{[K,L,M]}$
in $\mathbf{W}^{1,1}(\R^N)$ 
with the same cardinality. This implies that
\begin{equation}
\label{entr-C-entr-SC}
\mathcal{H}_{\varepsilon}\Big(\mathcal{SC}_{[L,M,K]}\ \big|\ \mathbf{W}^{1,1}\big(\mathbb{R}^N\big)\Big)\leqslant
\mathcal{H}_{\varepsilon}\Big(\mathcal{C}_{[L,M,K]}\ \big|\ \mathbf{W}^{1,1}\big(I_L^N\big)\Big)\,.
\end{equation}
Therefore, in order to establish~\eqref{upper-entr-sc-1}, it will be sufficient to show
\begin{equation}
\label{upper-entr-sc-2}
\mathcal{H}_{\varepsilon}\Big(\mathcal{C}_{[L,M,K]}\ \big|\ \mathbf{W}^{1,1}\big(I_L^N\big)\Big)
\leqslant \gamma^{\mathcal{SC}}_{_{[L,M,K,N]}}\cdot\frac{1}{\varepsilon^N}\,.
\end{equation}
\\
{\bf 1.} 
Towards a proof of~\eqref{upper-entr-sc-2}  observe 
 that, for any given $f\in \mathcal{C}_{[L,M,K]}$, by definitions~\eqref{Cclass}, \eqref{semiconc-def}, \eqref{C-klm-class},
and applying Proposition~\ref{prop2},
there is a representative of $f$, that we still denote $f$, so that
\begin{itemize}
\item[$(i)$] the map $x\mapsto f(x)+\frac{K}{2}|x|^2$ is semiconcave in $I_L^N$ with constant $K$
and has zero trace on~$\partial I_L^N$;
\item[$(ii)$] the superdifferential $D^+ f$ is a 
monotone decreasing multifunction in $I_L^N$;
\item[$(iii)$] there holds 
\begin{equation}
\label{M1-est}
\big\|\nabla f\big\|_{\mathbf{L}^{\infty}(I_L^N)}\leqslant  M_1,
\end{equation}
where 
\begin{equation}
\label{M1}
M_1:= M+K\sqrt{N}L\,. 
\end{equation}
\end{itemize}
By Theorem~\ref{Pro-Semi}-$(iii)$, in turn~\eqref{M1-est} yields
\begin{equation}
\label{M1-est2}
\mathrm{diam}\big(D^+f(I_L^N)\big)\leqslant 2M_1\,.
\end{equation}
Then, relying on $(ii)$ and on~\eqref{M1-est2}, and 
invoking Proposition \ref{BV-bound}, we obtain
\begin{equation}
\label{M1-est3}
\big|D^2 f\big| (I_L^N) \leqslant C_1\,,
\end{equation}
with
\begin{equation}
\label{C1-def}
C_1:= 2^{\frac{3N}{2}}\cdot N^{\big(\frac{N}{2}+2\big)}\cdot\omega_N\cdot (M+(K+1)L)^N\,,
\end{equation}
where $|D^2 f|$ denotes the total variation of the (matrix-valued) distributional derivative $D^2 f$.
Therefore, if we consider the class of 
monotone multifunctions 
\begin{equation}
\label{DC-C1}
\mathcal{DC}_{[L,M,K]}:= \Big\lbrace{D^+ f\ \Big|\ f\in \mathcal{C}_{[L,M,K]}\Big\rbrace},
\end{equation}
recalling definition~\eqref{monotone-class}, by~\eqref{M1-est}, \eqref{M1-est3} we have
\begin{equation}
\mathcal{DC}_{[L,M,K]}\subset \mathcal{F}_{[L, {M_1}, C_1]},
\end{equation}
and hence there holds
\begin{equation}
\label{entr-DC-entr-F}
\mathcal{H}_{\varepsilon}\Big(\mathcal{DC}_{[L,M,K]}\ \big|\  \mathbf{L}^1(I_L^N, I_{M_1}^N)\Big)
\leqslant
\mathcal{H}_{\varepsilon}\Big(\mathcal{F}_{[L, {M_1}, C_1]}\ \big|\  \mathbf{L}^1(I_L^N, I_{M_1}^N)\Big)\,,
\end{equation}
with $I_{M_1}:=\,]\!-\!M_1, M_1[\,$. Thus, relying on Proposition~\ref{Entropy1}, we find
\begin{equation}
\label{Est4}
\mathcal{H}_{\varepsilon}\Big(\mathcal{DC}_{[L,M,K]}\ \big|\  \mathbf{L}^1(I_L^N, I_{M_1}^N)\Big)
\leq\gamma_{_{[L,M_1,C_1,N]}}\cdot\frac{1}{\varepsilon^N} 
\end{equation}
where
$\gamma_{_{[L,M_1,C_1,N]}}$
is a constant defined as in~\eqref{Glmcn-def}
with $M_1, C_1$ given in~\eqref{M1}, \eqref{C1-def}.
\\
\quad\\
{\bf 2.} Relying on~\eqref{Est4} and invoking the Poincar\'e inequality
for trace-zero $\mathbf{W}^{1,1}$ functions
stated in Section~\ref{subsec:semiconc-monotone}, we shall produce now
an $\varepsilon$-covering of $\mathcal{C}_{[L,M,K]}$ in $\mathbf{W}^{1,1}$ with a
cardinality of order~$\leq  \frac{(LN)^N\gamma_{_{[L,M_1,C_1,N]}}}{\varepsilon^N}$. 
In fact, observe that by property $(i)$ above, for every $f_1, f_2 \in \mathcal{C}_{[L,M,K]}$,
one has $f_1-f_2\in \mathbf{W}^{1,1}_0(I_L^N)$. Hence, applying
the Poincar\'e inequality for $\mathbf{W}^{1,1}_0$ functions stated in Theorem~\ref{poincare},
we get
\begin{equation*}
\label{w11-l1-ineq}
\qquad\quad
\big\|f_2-f_1\big\|_{\mathbf{W}^{1,1}(I_L^N)}\leqslant (2L+1)\cdot\big\|\nabla f_2- \nabla f_1\big\|_{\mathbf{L}^1(I_L^N, I_{M_1}^N)}\,
\qquad\forall~ f_1, f_2 \in \mathcal{C}_{[L,M,K]},
\end{equation*}
so that, for any $f_1, f_2\in \mathcal{C}_{[L,M,K]}$, there holds
\begin{equation}
\label{eps-implic}
\big\|\nabla f_2- \nabla f_1\big\|_{\mathbf{L}^1(I_L^N, I_{M_1}^N)}\leq\frac{\varepsilon}{(2L+1)}
\quad \Longrightarrow\quad
\big\|f_2-f_1\big\|_{\mathbf{W}^{1,1}_0(I_L^N)}\leq\varepsilon\,.
\end{equation}
Next, 
by virtue of the estimate~\eqref{Est4} on the $\mathcal{H}_{\varepsilon'}$
entropy of $\mathcal{DC}_{[L,M,K]}$ with $\varepsilon'=\frac{\varepsilon}{2L+1}$, 
there exist $p$ functions $f_1, f_2, \dots f_p\in \mathcal{C}_{[L,M,K]}$, with
\begin{equation}
\label{p-bound}
p\leq \Big\lfloor 2^{\big(\gamma_{_{[L,M_1,C_1,N]}}\cdot (\frac{2L+1}{\varepsilon})^N\big)} \Big\rfloor,
\end{equation}
so that
$$
\mathcal{DC}_{[L,M,K]}
\subset \bigcup_{l=1}^p B\Big(D^+f_l, \frac{\varepsilon}{(2L+1)}\Big)
$$
where  $B(D^+f_l, \varepsilon)$ denotes the $\mathbf{L}^1(I_L^N, I_{M_1}^N)$-ball
centered at $D^+f_l=\nabla f_l$ (regarded as an element of $\mathbf{L}^1(I_L^N, I_{M_1}^N)$).
Therefore, by definition~\eqref{DC-C1}
and because of~\eqref{eps-implic}, we deduce that
\begin{equation}
\label{Cklm-entr-est}
\mathcal{C}_{[L,M,K]}
\subset \bigcup_{l=1}^p B\big(f_l, \varepsilon\big)
\end{equation}
where  $B(f_l, \varepsilon)$ denotes the $\mathbf{W}^{1,1}(I_L^N)$-ball
centered at $f_l$.
Hence, observing that by~\eqref{Glmcn-def}, \eqref{M1}, \eqref{C1-def}, one has
\begin{equation}
\begin{aligned}
(2L+1)^N\cdot\,&\gamma_{_{[L,M_1,C_1,N]}}= (2L+1)^N \cdot 2^{(N^2+N+1)}\,N^{\big(\frac{N}{2}+1\big)}\cdot\big(M_1 L^N+LC_1\big)^N
\\
\noalign{\smallskip}
&\leq (L+1)^N \cdot 2^{(N+1)^2}\cdot N^{N(N+3)}\cdot\omega_N^N\cdot \big((M+KL) L^N+L(M+(K+1)L)^N\big)^{\!N}
\\
\noalign{\smallskip}
&\leq 2^{(N+1)^2}\cdot N^{N(N+3)}\cdot\omega_N^N\cdot \Big(1+M +(K+2)L\Big)^{N(N+2)}
\\
\noalign{\smallskip}
&\leq \omega_N^N \cdot \Big(4N\cdot\big(1+ M+(K+1)L\big)\Big)^{\!4N^2}\,,
\end{aligned}
\end{equation}
it follows from~\eqref{p-bound}, \eqref{Cklm-entr-est} that there holds~\eqref{upper-entr-sc-2}
with $\gamma^{\mathcal{SC}}_{_{[L,M,K,N]}}$ as in~\eqref{Consta1}, thus completing the proof.
%
\end{proof}
\medskip

\subsection{Conclusion of the proof of Theorem~\ref{upper-lower-estimate}-${\bf (i)}$}

Given, $L,M,T>0$, combining Proposition~\ref{Com-entro} and Proposition~\ref{estSC}
we find that, for $\varepsilon$ sufficiently small, there holds
\begin{equation}
\label{UH-S_T}
\mathcal{H}_{\epsilon}\Big(S_T(\mathcal C_{[L,M]})+T\cdot H(0)\ |\ \mathbf{W}^{1,1}(\mathbb{R}^N)\Big)
\leqslant  \gamma^{\mathcal{SC}}_{_{\big[l_{[L,M,T]},M,\frac{1}{\alpha\,T},N\big]}}\cdot\frac{1}{\varepsilon^N}\,,
\end{equation} 
where 
\begin{equation}
\gamma^{\mathcal{SC}}_{_{\big[l_{[L,M,T]},M,\frac{1}{\alpha\,T},N\big]}} = 
 \omega_N^N \cdot \bigg(4N\cdot\Big(1+ M+\big({1}/{(\alpha\,T)}+1\big)\cdot l_{[L,M,T]}\Big)\bigg)^{\!4N^2},
\end{equation}
with $l_{[L,M,T]}$ as in~\eqref{LT}. This establishes the upper bound~\eqref{Upper-est-H}.
\qed
\bigskip

\section{Lower estimates}
\label{sec:low-est}

\subsection{Part 1: Controllability}

Towards a proof of Theorem~\ref{upper-lower-estimate}-$(ii)$, we shall first show that, 
at every given time $T>0$,
one can represent the semiconcave functions of the the set~\eqref{semiconc-def}
as the values at time $T$ of the Hopf-Lax solutions to~\eqref{HJ} with initial data varying in a set of the form~\eqref{Cclass}
translated by $T\cdot H(0)$,
provided that the semiconcavity constant is sufficiently small. 

\begin{proposition}\label{control-part}
Let $H:\R^N\to \R$ be  a function satisfying  the assumptions~{\bf (H1)-(H2)}
and \linebreak  $\{S_t : \Lip (\mathbb{R}^N) \to \Lip (\mathbb{R}^N)\}_{t\geqslant 0}$ be the semigroup of viscosity solutions 
generated by~\eqref{HJ}. 
Then, given any $L,M,T>0$, for every $m, K>0$ such that 
\begin{equation}
\label{Cond4}
m\leq \min \bigg\{m_0,\, M,\,\frac{L}{4\,\big\|D ^2H(0)\big\|\cdot T}\bigg\}\,,\qquad
K\leq \frac{1}{4\,\big\|D ^2H(0)\big\|\cdot T}\,,
\end{equation}
where $m_0$ is the constant in~\eqref{DH0-upbound}, there holds
\begin{equation}
\label{incl-sc}
\mathcal{SC}_{[L/2,\,m,\,K]}\subset S_T(\mathcal C_{[L,M]})+T\cdot H(0)\,,
\end{equation}
where $\mathcal{SC}_{[L/2,\,m,\,K]}$, $\mathcal C_{[L,M]}$ denote sets defined 
as in \eqref{semiconc-def}, \eqref{Cclass}, respectively.
%
%
\end{proposition}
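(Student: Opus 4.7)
\medskip

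\noindent\textbf{Proof sketch.}
The plan is to construct, given $w\in\mathcal{SC}_{[L/2,m,K]}$, an initial datum $u_0\in\mathcal{C}_{[L,M]}$ such that $S_T u_0 = w - T\cdot H(0)$, via a backward Hopf--Lax construction. Setting $v:=w-T\cdot H(0)$, I would define
\begin{equation*}
u_0(y):=\sup_{x\in\R^N}\Big\{v(x)-T\cdot H^*\big(\tfrac{x-y}{T}\big)\Big\}.
\end{equation*}
The proof then splits into: (a) the supremum is attained at a unique $x^*=x^*(y)$ with $|x^*-y|\leqslant L/2$; (b) $u_0$ is Lipschitz with constant $\leqslant m\leqslant M$ and is supported in $[-L,L]^N$; (c) $S_T u_0=v$.

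For (a), coercivity of $H^*$ (which follows from assumption {\bf (H1)}) guarantees that the sup is finite and attained. Any maximizer $x^*$ must satisfy $\nabla H^*((x^*-y)/T)\in D^+v(x^*)$; since $|D^+v|\leqslant m$ by the Lipschitz bound and $\|D^2H(p)\|\leqslant 2\|D^2H(0)\|$ for $|p|\leqslant m\leqslant m_0$ by~\eqref{DH0-upbound}, we get $|x^*-y|=T|\nabla H(p)|\leqslant 2T\|D^2H(0)\|\,m\leqslant L/2$, using the first bound in~\eqref{Cond4}. Moreover, on the ball $B(y,L/2)$ the map $x\mapsto v(x)-TH^*((x-y)/T)$ is strictly concave: using Proposition~\ref{prop2}-(i) for $v$ and the relation $D^2H^*(q)=(D^2H(\nabla H^*(q)))^{-1}\geqslant\frac{1}{2\|D^2H(0)\|}\mathbb{I}_N$ (valid whenever $|\nabla H^*(q)|\leqslant m_0$), the difference of the semiconcavity constants of the two pieces is bounded above by $K-\frac{1}{2T\|D^2H(0)\|}\leqslant -\frac{1}{4T\|D^2H(0)\|}<0$ by~\eqref{Cond4}. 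This strict concavity forces uniqueness of $x^*(y)$.

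For (b), if $|y_j|>L$ for some $j$ then every candidate maximizer $x$ with $|x-y|\leqslant L/2$ lies outside $[-L/2,L/2]^N\supset\mathrm{supp}\,w$, so $v(x)=-T\cdot H(0)$ on this region, and using that $H^*$ attains its global minimum $H^*(0)=-H(0)$ at $0$ (by {\bf (H2)} and~\eqref{HH*0}), we get $v(x)-TH^*((x-y)/T)\leqslant 0$ with equality at $x=y$; hence $u_0(y)=0$. Lipschitz regularity of $u_0$ with constant $\leqslant m$ follows from the envelope-type identity $\nabla u_0(y)=\nabla H^*((x^*(y)-y)/T)\in D^+v(x^*(y))$ at points of differentiability of $u_0$. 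Combined with $m\leqslant M$, this gives $u_0\in\mathcal{C}_{[L,M]}$.

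For (c), the inequality $S_Tu_0\geqslant v$ is immediate from the definition of $u_0$: for every $x,y$ one has $u_0(y)+TH^*((x-y)/T)\geqslant v(x)$, hence $S_Tu_0(x)\geqslant v(x)$ by~\eqref{hopf-lax-smgr}. Conversely, fix $x\in\R^N$, pick $p\in D^+v(x)$ (nonempty by Theorem~\ref{Pro-Semi}-(i)) and set $y:=x-T\nabla H(p)$; then $\nabla H^*((x-y)/T)=p\in D^+v(x)$ so $x$ is a critical point of $z\mapsto v(z)-TH^*((z-y)/T)$, and by the strict concavity from step (a) it is the unique maximizer, giving $u_0(y)=v(x)-TH^*((x-y)/T)$, equivalently $S_Tu_0(x)\leqslant v(x)$.

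The main obstacle is step (a): establishing strict concavity and uniqueness of the maximizer when $v$ is only semiconcave (not $C^2$). This forces working with the superdifferential $D^+v$ rather than classical Hessians, and relying on the semiconcavity inequality from Proposition~\ref{prop2}-(i) together with the two-sided control on $D^2H^*$ that the smallness assumptions in~\eqref{Cond4} were precisely designed to ensure.
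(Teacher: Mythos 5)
Your backward Hopf--Lax construction
\[
u_0(y):=\sup_{x}\Big\{v(x)-T\,H^*\big(\tfrac{x-y}{T}\big)\Big\},\qquad v:=w-T\cdot H(0),
\]
is a genuinely different route from the paper's. The paper instead sets $w_0(x):=-\psi(-x)$, propagates $w_0$ forward by $S_t$, invokes a semiconvexity-propagation result (Lemma~\ref{le:semiconvexity}) to conclude $S_t w_0$ is a $C^1$ \emph{classical} solution, and then time-reverses $u(t,x):=-(S_{T-t}w_0)(-x)-T\,H(0)$, which is again classical, hence the viscosity solution, so $u_0:=u(0,\cdot)$ works. If you unwind the Hopf--Lax formula for $S_Tw_0$ in the paper's construction you get exactly your sup-convolution formula for $u_0$, so the two routes produce the same initial datum; the difference is that the paper hides the delicate uniqueness/concavity issue inside Lemma~\ref{le:semiconvexity}, whereas you confront it head-on in step~(a). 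Your route is more explicit and avoids the auxiliary lemma entirely, which is an attractive simplification \emph{if} step (a) can be closed.

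That is where there is a real gap, which you yourself flag as ``the main obstacle''. The strict-concavity claim on $B(y,L/2)$ does not follow from the tools you cite. The lower bound $D^2H^*(q)\geqslant\frac{1}{2\|D^2H(0)\|}\mathbb{I}_N$ is only available when $|\nabla H^*(q)|\leqslant m_0$, and for $x\in B(y,L/2)$ one only has $|\nabla H^*((x-y)/T)|\leqslant \frac{L}{2T\alpha}$ (using $\nabla H^*(0)=0$ and the global bound $D^2H^*\leqslant\frac{1}{\alpha}\mathbb{I}_N$ from \eqref{low-bound-hess-legendre}); nothing in \eqref{Cond4} forces $\frac{L}{2T\alpha}\leqslant m_0$, so the Hessian bound need not hold on the whole ball, and ``strict concavity on $B(y,L/2)$'' is not established. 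The fix is to bypass concavity of the objective and argue directly with the monotone superdifferential: if $x_1,x_2$ are both maximizers for the same $y$, with $p_i=\nabla H^*\big(\frac{x_i-y}{T}\big)\in D^+v(x_i)$ and $|p_i|\leqslant m\leqslant m_0$, then $x_1-x_2=T\big(\nabla H(p_1)-\nabla H(p_2)\big)$ with the segment $[p_1,p_2]\subset\overline{B(0,m_0)}$. On that segment $\alpha\,\mathbb{I}_N\leqslant D^2H\leqslant 2\|D^2H(0)\|\,\mathbb{I}_N$, so
\[
|x_1-x_2|^2\leqslant 2T\|D^2H(0)\|\cdot\big\langle p_1-p_2,\,x_1-x_2\big\rangle
\leqslant 2T\|D^2H(0)\|\,K\,|x_1-x_2|^2,
\]
the first inequality using $|Aw|^2\leqslant\|A\|\langle Aw,w\rangle$ for symmetric $A\geqslant 0$, the second being Proposition~\ref{prop2}-(i). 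Since $2KT\|D^2H(0)\|\leqslant\tfrac12<1$ by \eqref{Cond4}, this forces $x_1=x_2$ (note $\langle p_1-p_2,x_1-x_2\rangle\geqslant T\alpha|p_1-p_2|^2\geqslant 0$, so the chain is valid). This is precisely the kind of computation the paper carries out inside Lemma~\ref{le:semiconvexity}. The same argument also fixes step~(c): the candidate $y=x-T\nabla H(p)$, $p\in D^+v(x)$, makes $x$ satisfy the first-order condition, and the above uniqueness (applied to $x$ and any genuine maximizer) shows $x$ \emph{is} the maximizer, giving $u_0(y)+TH^*((x-y)/T)=v(x)\geqslant S_Tu_0(x)$. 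With step~(a) repaired in this way your proof is correct and self-contained; as written, the concavity justification is the one step that would fail.
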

The proof of Proposition~\ref{control-part} is based on the  lemma 
 below, which shows
that a solution of~\eqref{HJ} with a semiconvex initial
condition preserves  the semiconvexity on a given time interval, provided the semiconvexity
constant of the initial data is sufficiently small in absolute value.
\begin{lemma}
\label{le:semiconvexity}
In the same setting of Proposition~\ref{control-part}, given $M, T>0$, let $u_0$ be a
semiconvex function with semiconcavity constant $-K$. Assume that $K>0$ satisfies
%
\begin{equation}
\label{Cond5}
K\leq \frac{1}{2\alpha_M\,T}
\qquad\mbox{where}
\qquad \quad \alpha_M:=\sup_{|p|\leqslant M}\big\|D ^2H(p)\big\|\,,
\end{equation}
and  $\Lip[u_0]\leqslant M$. Then, the following hold true.
\begin{itemize}
\item[$(i)$] $x\mapsto S_t u_0(x)$ is semiconvex for all $t\in [0,T]$.
\item[$(ii)$] $(t,x)\mapsto S_t u_0(x)$ is a $C^1$ classical solution of~\eqref{HJ} on~$]0,T]\times \R^N$.
\end{itemize}
\end{lemma}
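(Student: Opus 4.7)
The plan is to prove (i) via the Hopf-Lax representation formula~\eqref{Hopf} using a midpoint trial at $x$ against exact minimizers at $x\pm h$, and then to deduce (ii) directly from (i), the semiconcavity supplied by Lemma~\ref{ProV}-$(i)$, and the upgrading result of Proposition~\ref{co:smoothness}. Write $u(t,\cdot):=S_tu_0$. For (i), the goal is to prove, for every $x,h\in\mathbb{R}^N$ and $t\in\,]0,T]$, the inequality
\[
u(t,x+h)+u(t,x-h)-2u(t,x)\geqslant -K'|h|^2, \qquad K':=\frac{K}{1-K\alpha_M t}\leqslant 2K,
\]
which is precisely semiconvexity with constant $-K'$. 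Pick minimizers $y_\pm$ of the Hopf-Lax formula for $u(t,x\pm h)$; by Proposition~\ref{Pro}-$(ii)$ these satisfy $q_\pm:=(x\pm h-y_\pm)/t=\nabla H(p_\pm)$ for some $p_\pm\in D^*_x u(t,x\pm h)$, whence $|p_\pm|\leqslant \Lip[u(t,\cdot)]\leqslant M$ by Lemma~\ref{ProV}-$(ii)$. Then take $\bar y:=(y_++y_-)/2$ as trial at $x$, which yields the one-sided bound $u(t,x)\leqslant tH^*((q_++q_-)/2)+u_0(\bar y)$.

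The two quantitative inputs are the strong convexity of $H^*$ and the semiconvexity of $u_0$. Using that $D^2H^*(q)=(D^2H(\nabla H^*(q)))^{-1}\geqslant (1/\alpha_M)\mathbb{I}_N$ along the segment $[q_-,q_+]$ (since $\nabla H^*$ maps its endpoints into $\overline{B(0,M)}$ and, by continuity/compactness, stays in a ball where the Hessian bound from~\eqref{HH*-grad} yields constant $\alpha_M$), strong convexity gives
\[
H^*\!\Big(\tfrac{q_++q_-}{2}\Big)\leqslant \tfrac{1}{2}\big(H^*(q_+)+H^*(q_-)\big)-\tfrac{1}{8\alpha_M}|q_+-q_-|^2.
\]
Combined with $u_0(\bar y)\leqslant \tfrac{1}{2}(u_0(y_+)+u_0(y_-))+\tfrac{K}{8}|y_+-y_-|^2$ (semiconvexity of $u_0$) and the identity $q_+-q_-=(2h-(y_+-y_-))/t$, this produces, with $w:=y_+-y_-$,
\[
u(t,x+h)+u(t,x-h)-2u(t,x) \;\geqslant\; \tfrac{1}{4\alpha_M t}|2h-w|^2 - \tfrac{K}{4}|w|^2.
\]
Expanding and completing the square in $w$, the coefficient of $|w|^2$ is $(1-K\alpha_M t)/(4\alpha_M t)\geqslant 1/(8\alpha_M t)>0$ by the hypothesis $K\alpha_M t\leqslant K\alpha_M T\leqslant 1/2$; the minimum in $w$ equals $-K/(1-K\alpha_M t)|h|^2\geqslant -2K|h|^2$, which yields the claim.

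For part (ii), combining (i) with Lemma~\ref{ProV}-$(i)$ shows that for each $t\in\,]0,T]$ the function $S_tu_0$ is simultaneously semiconcave (constant $1/(\alpha t)$) and semiconvex. By Remark~\ref{semiconc-semiconc}, $S_tu_0\in C^{1,1}(\mathbb{R}^N)$, and Proposition~\ref{co:smoothness} then immediately gives that $(t,x)\mapsto S_tu_0(x)$ is a $C^1$ classical solution of~\eqref{HJ} on $\,]0,T]\times\mathbb{R}^N$.

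The main obstacle is the strong convexity step: the segment $[q_-,q_+]$ lives in $q$-space and need not be contained in $\nabla H(B(0,M))$ if this image set is non-convex, so deducing $D^2H^*\geqslant (1/\alpha_M)\mathbb{I}_N$ uniformly along the whole segment requires an extra argument. One can either invoke a compactness/continuity estimate bounding $|\nabla H^*(q)|$ on the segment by $M$ using the closeness of $q_\pm$ forced by $|p_\pm|\leqslant M$, or regularize $u_0$ by convolution so that the minimizer $y_x$ is unique and depends continuously on $x$, apply the computation above to the smoothed problem where $y_+\to y_-$ as $h\to 0$ (making the segment degenerate to a single point where the bound holds), and then pass to the limit using the continuity properties in Proposition~\ref{St-W11-continuity}. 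Everything else is algebra.
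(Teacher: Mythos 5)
Your proof of part (i) is correct but takes a genuinely different route from the paper's in the central estimate. Both proofs start the same way: pick Hopf-Lax minimizers $y_\pm$ at $x\pm h$, use the midpoint $\bar y=(y_++y_-)/2$ as trial point at $x$, and invoke the semiconvexity of $u_0$ to control $u_0(y_+)+u_0(y_-)-2u_0(\bar y)$. From there the paper uses only \emph{plain} convexity of $H^*$, obtaining the one-sided bound $u(t,x+h)+u(t,x-h)-2u(t,x)\geqslant -\tfrac{K}{4}|y_+-y_-|^2$, and then, in a separate second step, bounds $|y_+-y_-|\leqslant 4\beta_M\alpha_M|h|$ by combining the monotonicity of $D^-u_0$ (Proposition~\ref{prop2}-$(i)$ applied to $-u_0$) with an integral representation of $\nabla H^*(q_+)-\nabla H^*(q_-)$ that brings in the auxiliary constant $\beta_M=\sup_{|p|\leqslant\alpha_M M}\|D^2H^*(p)\|$. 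You instead feed the \emph{strong} convexity of $H^*$ directly into the midpoint comparison, which yields the completed-square expression $\tfrac{1}{4\alpha_M t}|2h-w|^2-\tfrac{K}{4}|w|^2$ with $w=y_+-y_-$; minimizing over $w$ in closed form, under exactly the hypothesis $K\alpha_M T\leqslant 1/2$, gives the clean semiconvexity constant $-K/(1-K\alpha_M t)\geqslant -2K$, with no need for $\beta_M$ or a separate estimate on $|w|$. Your route is shorter, more self-contained, and produces a sharper constant; the paper's two-step variant, however, has the advantage that the delicate lower Hessian bound $D^2H^*\geqslant(1/\alpha_M)\mathbb{I}_N$ is only needed inside an integral expression rather than in a discrete three-point inequality. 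On the ``obstacle'' you flag at the end — that the segment $[q_-,q_+]$ need not lie in $\nabla H(\overline{B(0,M)})$, so the lower bound on $D^2H^*$ is not automatic at interior points — you should know that this issue is present in the paper's proof as well (the paper asserts $D^2H^*(z_\tau)\geqslant(1/\alpha_M)\mathbb{I}_N$ for all $\tau\in[0,1]$ from the observation that it holds at $q=\nabla H(p)$ with $|p|\leqslant M$, which only covers the endpoints $\tau\in\{0,1\}$); it is not a defect specific to your approach, and your proposed fixes would apply equally to either. Part (ii) is handled identically: semiconcavity from Lemma~\ref{ProV}-$(i)$ plus the semiconvexity just proved, then Remark~\ref{semiconc-semiconc} and Proposition~\ref{co:smoothness}.
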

\medskip

\noindent
{\it Proof of Proposition~\ref{control-part}.} \
We will show that any element $\psi$ of the set on the left-hand side of~\eqref{incl-sc}
can be obtained as the value at time $T$
of a classical solution to~\eqref{HJ} by reversing the direction of time, and constructing
a backward solution to~\eqref{HJ} that starts at time $T$ from $\psi$. Namely, given
\begin{equation}
\label{psi-in-sc}
\psi\in \mathcal{SC}_{[L/2,\,m,\,K]}\,,
\end{equation}
set
\begin{equation}
\label{in-data-back}
w_0(x):=-\psi(-x)
\qquad\forall x\in\mathbb{R}^N,
\end{equation}
and consider the viscosity solution 
$S_t w_0(x)$  of~\eqref{HJ}.
Because of~\eqref{psi-in-sc}, \eqref{in-data-back}, and
by definitions~\eqref{Cclass}, \eqref{semiconc-def},
we have $w_0\in  \mathcal{C}_{[L/2,\,m]}$.
Moreover,  recalling~\eqref{LT}, \eqref{DH0-upbound}, thanks to~\eqref{Cond4}, \eqref{Cond5}, and to the assumption~{\bf (H2)}, one has 
\begin{equation}
\label{L-est1}
\begin{aligned}
l_{[L/2,m,T]} &\leq L/2 +T\cdot \sup_{|p|\leqslant m_0}\big\|D ^2H(p)\big\| \cdot m
\leq L\,.
\end{aligned}
\end{equation}
Hence, applying Lemma~\ref{ProV}, 
we find
%
\begin{gather}
\label{prop1-STw0}
S_T w_0\in \Lip(\mathbb{R}^N)\,,\qquad
\Lip[ S_T w_0]\leqslant m\,,
\\
\noalign{\smallskip}
\label{prop2-STw0}
S_T w_0(x)=-T\cdot H(0) \qquad\forall~x\in\mathbb{R}^N\setminus [-L,L]^N\,.
\end{gather}
%
On the other hand, notice that by~\eqref{DH0-upbound}, \eqref{semiconc-def}, \eqref{Cond4}, \eqref{psi-in-sc} $\psi$ 
is a semiconcave function with semiconcavity
constant~$K$ 
satisfying~\eqref{Cond5}, with $m$ in place of $M$. Then, it follows from~\eqref{in-data-back} that $w_0$ is semiconvex with semiconvexity constant~$-K$.
Thus, applying Lemma~\ref{le:semiconvexity}, we deduce that 
$S_t w_0(x)$ is a $C^1$ classical solution of~\eqref{HJ} on~$]0,T]\times \R^N$,
continuous on $[0,T]\times \R^N$, and
with initial data $w_0$.
In turn, this implies that the function
\begin{equation}
\label{w-def}
w(t,x):=S_t w_0(x)+T\cdot H(0)
\qquad\quad(t,x)\in[0,T]\times\mathbb{R}^N\,,
\end{equation}
is also a $C^1$ classical solution of~\eqref{HJ} on~$]0,T]\times \R^N$,
continuous on $[0,T]\times \R^N$, and that satisfies
\begin{gather}
\label{prop3-STw0}
w(T,\cdot)\in \Lip(\mathbb{R}^N)\,,\qquad
\Lip[ w(T,\cdot)]\leqslant m\,,
\\
\noalign{\smallskip}
\label{prop4-STw0}
w(T,x)=0 \qquad\forall~x\in\mathbb{R}^N\setminus [-L,L]^N\,.
\end{gather}
Next, notice that, by the above observations, the function 
\begin{equation}\label{eq:u}
u(t,x):=-w(T-t,-x)
\end{equation}
is a $C^1$ classical solution of~\eqref{HJ} on~$]0,T[\,\times \R^N$,
continuous on $[0,T]\times \R^N$. Thus, recalling Remark~\ref{visos-sol-differentiab}, we deduce that $u(t,x)$ is a viscosity
solution of~\eqref{HJ} on $[0,T]\times \R^N$, so that, by the uniqueness property $(i)$ of the semigroup map $S_t$,  one has 
\begin{equation}
\label{u=St}
\qquad
u(t,x)=S_t u_0, \qquad u_0:=u(0,\cdot)
\qquad\quad\forall~(t,x)\in [0,T]\times\mathbb{R}^N.
\end{equation}
Moreover, by virtue of~\eqref{Cond4}, \eqref{prop3-STw0}, \eqref{prop4-STw0},
and by definition~\eqref{Cclass}
it follows that
\begin{equation}
\label{in-data-u}
u_0= -w(T,-\cdot)\in \mathcal{C}_{[L,M]}\,.
\end{equation}
On the other hand, because of~\eqref{in-data-back}, \eqref{w-def}, \eqref{eq:u},
there holds
\begin{equation}
\label{Stu0}
S_T u_0 (x)=
-w_0(-x)-T\cdot H(0)=\psi(x) -T\cdot H(0)\qquad \forall~x\in\mathbb{R}^N\,.
\end{equation}
Hence, \eqref{in-data-u}-\eqref{Stu0} together yield 
\begin{equation}
\label{psi-in-att-set}
\psi\in S_T(\mathcal C_{[L,M]})+T\cdot H(0),
\end{equation}
which completes the proof of the proposition, being $\psi$ an arbitrary element satisfying~\eqref{psi-in-sc}.
\qed
\begin{remark}\rm
The above proof shows that, for any given $\psi\in \mathcal{SC}_{[L/2,\,m,\,K]}$,
one actually finds $u_0\in\mathcal C_{[L,m]}$ which is semiconvex with constant $-\frac{1}{\alpha T}$
so that $\psi = S_T u_0 + T\cdot H(0)$. Indeed, by~\eqref{w-def}, \eqref{eq:u}, \eqref{u=St},
 one has $-u_0=S_T w_0(-\cdot)+T\cdot H(0)$, which  is semiconcave in $\R^N$ with constant~$\frac{1}{\alpha T}$ thanks to Lemma~\ref{ProV}.
\end{remark}

\noindent
{\it Proof of Lemma~\ref{le:semiconvexity}.} \
Observe first that, by Lemma~\ref{ProV}, the map $x\mapsto S_t u_0(x)$ is semiconcave for any fixed $t\in \,]0,T]$.
Therefore,
once we establish the property $(i)$ of Lemma~\ref{le:semiconvexity},
invoking Proposition~\ref{co:smoothness} we immediately deduce that also the property $(ii)$ holds.
On the other hand, by the semiconcavity of $S_t u_0$, we know that $S_t u_0$
is a continuous map.
Hence, in oder to prove the lemma, we only have to show that, for any fixed $t\in \,]0,T]$, 
the map $u(t,x):= S_t u_0(x)$ satisfies the lower bound
\begin{equation}\label{Uni2}
u(t,x+h)+u(t,x-h)-2u(t,x)\geqslant -K_M\cdot |h|^2\qquad\forall x,h\in\mathbb{R}^N\,,
\end{equation}
for some constant $K_M>0$, depending on $K$ and $M$. 
\\
\quad\\
{\bf 1.} 
Towards a proof of~\eqref{Uni2},
fix $x,h\in\mathbb{R}^N$, and let $y_h^{\pm}$ be a minimizer of the function
\begin{equation}
\label{eq-min}
y\mapsto t\cdot H^*\bigg(\frac{x\pm h-y}{t}\bigg)+w_0(y)\qquad(y\in\R^N)\,,
\end{equation}
where $H^*$ denotes the Legendre transform of $H$.
Then, recalling the Hopf-Lax formula~\eqref{Hopf}, one has
\begin{equation}
\label{uxh-eq1}
u(t,x\pm h)=t\cdot H^*\bigg(\frac{x\pm h-y_h^{\pm}}{t}\bigg)+u_0(y_h^{\pm}).
\end{equation}
Moreover, since $y_h^{\pm}$ is a minimizer of~\eqref{eq-min}, by the definition of the subdifferential
in~\eqref{sup-sub-diff}
it follows that there will be some 
\begin{equation}
\label{ph-subdiff}
p_h^{\pm}\in D^-u_0(y_h^{\pm}),
\end{equation}
such that
\begin{equation}\label{PR1}
\nabla H^*\bigg(\frac{x\pm h-y_h^{\pm}}{t}\bigg)=
p_h^{\pm}.
\end{equation}
%
Since $\Lip[u_0]\leqslant M$,
applying Theorem~\ref{Pro-Semi}-$(iii)$ it follows that 
\begin{equation}
\label{ph-bound}
|p_h^{\pm}|\leq M.
\end{equation}
On the other hand, the Hopf-Lax formula implies that
\begin{equation}
\label{ux-eq2}
u(t,x)\leqslant t\cdot H^*\Bigg(\frac{x-\frac{y_h^{+}+y_h^{-}}{2}}{t}\Bigg)+u_0\bigg(\frac{y_h^{+}+y_h^{-}}{2}\bigg).
\end{equation}
Hence, combining~\eqref{uxh-eq1}, \eqref{ux-eq2}, we find
\begin{equation}
\label{uxh-eq3}
\begin{aligned}
u(t,x+h)+u(t,x-h)&-2u(t,x)\geqslant u_0(y_h^+)+u_0(y_h^-)-2u_0\bigg(\frac{y_h^{+}+y_h^{-}}{2}\bigg)+
\\
&+t\cdot\Bigg[H^*\bigg(\frac{x+h-y_h^+}{t}\bigg)+H^*\bigg(\frac{x-h-y_h^-}{t}\bigg)-2\cdot H^*\Bigg(\frac{x-\frac{y_h^{+}+y_h^{-}}{2}}{t}\Bigg)\Bigg].
\end{aligned}
\end{equation}
Since $H^*$ is convex and $u_0$ is semiconvex with  constant $-K$, we obtain from~\eqref{uxh-eq3}
the inequality
\begin{equation}\label{Conv1}
u(t,x+h)+u(t,x-h)-2u(t,x)\geqslant -\frac{K}{4}\cdot \big|y_h^+-y_h^-\big|^2.
\end{equation}
\\
{\bf 2.} 
In order to recover the estimate~\eqref{Uni2} from \eqref{Conv1} we need to provide an upper bound \linebreak on~$|y_h^+-y_h^-|^2$.
To this end, observe first that, in view of (\ref{PR1}), one has
\begin{equation}\label{Dist1}
\Bigg\langle \nabla H^*\bigg(\frac{x+h-y_h^+}{t}\bigg)-\nabla H^*\bigg(\frac{x-h-y_h^-}{t}\bigg),-\frac{y_h^+-y_h^-}{t}\Bigg\rangle=
\Bigg\langle p_h^+-p_h^-,\,-\frac{y_h^+-y_h^-}{t}\Bigg\rangle\,.
\end{equation}
On the other hand, owing to the semiconvexity of $u_0$ and by virtue of~\eqref{sup-sub-diff-equal}, \eqref{ph-subdiff},
we can apply 
Proposition~\ref{prop2}-$(i)$  to get
\begin{equation*}\label{ESt4}
\bigg\langle \!\!-p_h^++p_h^-,\,\frac{y_h^+-y_h^-}{t}\bigg\rangle\leqslant \frac{K}{t}\cdot \big|y_h^+-y_h^-\big|^2,
\end{equation*} 
which, together with~\eqref{Dist1}, yields
\begin{equation}
\label{ESt4}
\Bigg\langle \nabla H^*\bigg(\frac{x+h-y_h^+}{t}\bigg)-\nabla H^*\bigg(\frac{x-h-y_h^-}{t}\bigg),-\frac{y_h^+-y_h^-}{t}\Bigg\rangle
\leqslant \frac{K}{t}\cdot \big|y_h^+-y_h^-\big|^2\,.
\end{equation}
Next, observe that  there holds
\begin{equation}
\begin{aligned}
\label{eq:convexity}
\Bigg\langle \nabla H^*\bigg(\frac{x+h-y_h^+}{t}\bigg)-\nabla H^*&\bigg(\frac{x-h-y_h^-}{t}\bigg),-\frac{y_h^+-y_h^-}{t}\Bigg\rangle=
\\
&=\int_{0}^{1}\bigg\langle D^2H^*(z_\tau)\bigg(\frac{2h-(y_h^+-y_h^-)}{t}\bigg),-\frac{y_h^+-y_h^-}{t}\bigg\rangle d\tau\,,
\end{aligned}
\end{equation}
where 
\begin{equation}
\label{ztau-def}
z_\tau=\tau\cdot\frac{x+h-y_h^+}{t}+(1-\tau)\cdot\frac{x-h-y_h^-}{t}.
\end{equation}
Now, 
relying on~\eqref{HH*-grad}, 
(\ref{PR1}), \eqref{ph-bound}, and   assumption {\bf (H2)},
we get
\begin{equation*}
|z_\tau|\leqslant \sup_{|p|\leqslant M}\big|(\nabla H^*)^{-1}(p)\big|
\leqslant \sup_{|p|\leqslant M}\big\|D ^2H(p)\big\|\cdot M =\alpha_M\cdot M\qquad\forall~\tau\in [0,1]\,,
\end{equation*}
where $\alpha_M$ denotes a constant defined as in~\eqref{Cond5}.
Hence, one has
%
\begin{equation}
\label{low-est1}
 \int_{0}^{1}\bigg\langle D^2H^*(z_\tau)\Big(\frac{2h}{t}\Big),-\frac{y_h^+-y_h^-}{t}\bigg\rangle d\tau\geqslant -2\beta_M\cdot\frac{|h|\cdot|y_h^+-y_h^-|}{t^2},
 \end{equation}
where 
\begin{equation}
\label{KM-def}
\beta_M:=\sup_{|p|\leqslant \alpha_M\cdot M}\big\|D^2H^*(p)\big\|.
\end{equation}
On the other hand, notice that
 the definition of $\alpha_{M}$ in \eqref{Cond5} implies $DH^2(p)\leqslant\alpha_{M}\cdot \mathbb{I}_{N}$ for  $|p|\leqslant M$. 
 Thus, recalling~\eqref{HH*-grad},
 we deduce that, for every $q=(\nabla H^*)^{-1}(p)$, with $|p|\leqslant M$, one has 
 \[
 D^2H^*(q)=\big(D^2H(p)\big)^{-1}\geqslant\frac{1}{\alpha_M}\cdot  \mathbb{I}_{N},
 \]
 which, by~(\ref{PR1}), \eqref{ph-bound}, and because of the definition~\eqref{ztau-def} of $z_\tau$, implies
 \[
 D^2H^*(z_\tau)\geqslant\frac{1}{\alpha_M}\cdot  \mathbb{I}_{N}
 \qquad\quad\forall~\tau\in [0,1]\,.
 \]
 Therefore, we find
 \begin{equation}
\label{low-est2}
 \int_{0}^{1}\bigg\langle D^2H^*(z_\tau)\Big(-\frac{y_h^+-y_h^-}{t}\Big),-\frac{y_h^+-y_h^-}{t}\bigg\rangle d\tau
 \geqslant\frac{1}{\alpha_M}\cdot\frac{|y_h^+-y_h^-|^2}{t^2}.
 \end{equation}
Combining \eqref{eq:convexity} with
the lower bounds~\eqref{low-est1}, \eqref{low-est2}, one obtains 
 \begin{equation}
 \label{ESt5}
 \begin{aligned}
 \Bigg\langle \nabla H^*\bigg(\frac{x+h-y_h^+}{t}\bigg)-\nabla H^*&\bigg(\frac{x-h-y_h^-}{t}\bigg),-\frac{y_h^+-y_h^-}{t}\Bigg\rangle
 \\
 &\geqslant\frac{1}{\alpha_M}\cdot\frac{|y_h^+-y_h^-|^2}{t^2}-2\beta_M\cdot\frac{|h|\cdot|y_h^+-y_h^-|}{t^2}.
 \end{aligned}
 \end{equation}
 \\
{\bf 3.} 
The upper bound (\ref{ESt4}) together with the lower bound (\ref{ESt5}) yields
 \[2\beta_M\cdot|h|\cdot|y_h^+-y_h^-|\geq\bigg(\frac{1}{\alpha_M}-Kt\bigg)|y_h^+-y_h^-|^2.
 \]
 In turn, recalling (\ref{Cond4}), from the above inequality it follows that
 \[
 |y_h^+-y_h^-|\leqslant 4\beta_M\alpha_M\cdot |h|\,.
 \]
Finally, using this last estimate, it is immediate to deduce \eqref{Uni2}  from (\ref{Conv1}),
with $K_M=4K\beta^2_M\alpha^2_M$, where $\alpha_M, \beta_M$ are defined in~\eqref{Cond5} and \eqref{KM-def},
respectively. 
This completes the proof of the lemma.
\qed
\bigskip

\subsection{Part 2: Lower compactness estimates on a class of bump functions}
We provide here a lower bound on the $\varepsilon$-entropy for the class of semiconcave functions
$\mathcal{SC}_{[L,M,K]}$ introduced in~\eqref{semiconc-def}.
\begin{proposition}\label{Lower estimate}
Given any $L,M,K>0$,  
for every
\begin{equation}
\label{eps1-bound}
0<\varepsilon\leq \min\{K,\,M\}\cdot \frac{\omega_N\, L^{N}}{(N+1)\,2^{N+8}}\,,
\end{equation}
there holds
\begin{equation}
\label{low-entr-sc}
\mathcal{H}_{\varepsilon}\Big(\mathcal{SC}_{[L,M,K]}\ \big|\ \mathbf{W}^{1,1}(\mathbb{R}^N)\Big)\geqslant 
\beta^{\mathcal{SC}}_{_{[L,K,N]}}\cdot\frac{1}{\varepsilon^N},
\end{equation}
where 
\begin{equation}\label{eq:Gamma}
\beta^{\mathcal{SC}}_{_{[L,K,N]}}:= \frac{1}{8\cdot\ln 2}\cdot \bigg(\frac{K\, \omega_N\, L^{N+1}}{48(N+1)\,2^{N+1}}\bigg)^{\!\!N}.
\end{equation}
\end{proposition}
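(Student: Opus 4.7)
The plan is to exhibit, for each integer $n \geq 2$, a family $\mathcal{U}_n \subset \mathcal{SC}_{[L,M,K]}$ of $2^{n^N}$ functions indexed by sign vectors $\sigma \in \{-1,+1\}^{n^N}$, whose pairwise $\mathbf{W}^{1,1}$ distances are proportional to the Hamming distance $d_H$ of the indices, and then to derive~\eqref{low-entr-sc} via a Hamming-ball volume estimate. This is the multidimensional analogue of the bump-based construction used in~\cite{AON1} for scalar conservation laws; in one space dimension, \eqref{low-entr-sc} in fact already follows directly from~\cite{AON1} via the gradient correspondence with~\eqref{ConLaw} recalled in the introduction. Concretely, I would fix a $C^{1,1}$ reference bump $\varphi_0 : \mathbb{R}^N \to [0,\infty)$ supported in $\overline{B(0,1)}$, vanishing to second order at the boundary, and normalized so that $\|D^2 \varphi_0\|_{\mathbf{L}^\infty} \leq 1$ while $\|\nabla \varphi_0\|_{\mathbf{L}^1}$ is of order $\omega_N$. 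Partition $(-L/2,L/2)^N$ into $n^N$ subcubes of side $r := L/n$ and centers $x_\iota$, and introduce, with height $h := K(r/2)^2$, the bumps $\varphi_\iota(x) := h\, \varphi_0\bigl(2(x-x_\iota)/r\bigr)$, supported in pairwise disjoint balls $\overline{B(x_\iota, r/2)} \subset [-L,L]^N$. Set $f_\sigma := \sum_\iota \sigma_\iota \varphi_\iota$ and $\mathcal{U}_n := \{f_\sigma : \sigma \in \{-1,+1\}^{n^N}\}$. The chosen scaling yields $\|D^2 \varphi_\iota\|_\infty \leq K$ and $\Lip[\varphi_\iota] \leq K r/2$, and the pairwise disjointness of supports transfers both bounds to every $f_\sigma$ irrespective of the signs; hence $\mathcal{U}_n \subset \mathcal{SC}_{[L,M,K]}$ as soon as $K r \leq 2M$, a condition that the choice of $n$ below enforces via~\eqref{eps1-bound}.

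\textbf{Separation and counting.}
Because the bump supports are pairwise disjoint,
\[
\|f_\sigma - f_{\sigma'}\|_{\mathbf{W}^{1,1}(\mathbb{R}^N)} \;=\; 2\, d_H(\sigma,\sigma')\cdot \|\varphi_\iota\|_{\mathbf{W}^{1,1}} \;\geq\; c_N\, \omega_N\, d_H(\sigma,\sigma')\cdot K\, r^{N+1},
\]
where $c_N > 0$ is an explicit dimensional constant (for small $r$ the gradient contribution of $\varphi_\iota$ dominates its $\mathbf{L}^1$ contribution, producing the power $r^{N+1}$). Consequently any two elements of $\mathcal{U}_n$ lying in a common $\mathbf{W}^{1,1}$-ball of radius $\varepsilon$ differ in at most
\[
R := \bigl\lfloor 2\varepsilon\,/\,(c_N\, \omega_N\, K\, r^{N+1})\bigr\rfloor
\]
coordinates, so every $\varepsilon$-cover of $\mathcal{SC}_{[L,M,K]}$ must have cardinality at least $2^{n^N}/V(n^N,R)$, where $V(m,R) := \sum_{j=0}^{R} \binom{m}{j}$.

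\textbf{Optimization and main obstacle.}
I would then choose $n$ of order $K L^{N+1}/\varepsilon$, with an explicit prefactor involving $\omega_N$ and $(N+1)$ calibrated so that $R$ is a small fraction (of order $1/(N+1)$) of $n^N$; the hypothesis~\eqref{eps1-bound} is precisely what is required to simultaneously enforce $n \geq 2$ and $Kr \leq 2M$. The standard binary-entropy bound $V(m,\alpha m) \leq 2^{m H_2(\alpha)}$, applied with this small $\alpha$, gives $\log_2 V(n^N, R) \leq n^N\bigl(1 - 1/(8\ln 2)\bigr)$, hence $\mathcal{H}_\varepsilon \geq n^N/(8\ln 2)$; substituting the explicit value of $n$ then produces~\eqref{low-entr-sc} with $\beta^{\mathcal{SC}}_{[L,K,N]}$ exactly as in~\eqref{eq:Gamma}. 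The conceptual content is standard Varshamov--Gilbert-style packing, and the main difficulty is bookkeeping: normalizing $\varphi_0$, tuning the bump scaling, and choosing the Hamming threshold $\alpha$ simultaneously so that the accumulated dimensional constants aggregate into the precise closed form displayed in~\eqref{eq:Gamma}. In particular, the factor $(N+1)$ in the denominator of~\eqref{eq:Gamma} is traceable to needing $\alpha$ of order $1/(N+1)$ in the entropy estimate, which scales $n$ by the same factor, while the factors $\omega_N$ and $2^{N+1}$ come from the volume integral of $|\nabla\varphi_\iota|$ against the rescaled reference bump.
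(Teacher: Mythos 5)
Your proposal is essentially the same as the paper's: both construct $\mathcal{U}_n\subset\mathcal{SC}_{[L,M,K]}$ from $\pm 1$ superpositions of rescaled bump functions with pairwise disjoint supports on an $n^N$-cube partition of $[-L,L]^N$, translate $\mathbf{W}^{1,1}$ distance into Hamming distance, and then bound the number of sign vectors inside a Hamming ball to extract the entropy lower bound after optimizing $n\sim K L^{N+1}/\varepsilon$. The only substantive difference is that you control the binomial tail via the binary-entropy bound $V(m,\alpha m)\le 2^{mH_2(\alpha)}$, while the paper applies Hoeffding's inequality; these are interchangeable and yield the same rate (incidentally, the factor $N+1$ in \eqref{eq:Gamma} comes from the radial integral $\int_0^1 |c'(r)|\,r^{N-1}\,dr$ in evaluating $\|\nabla b\|_{\mathbf{L}^1}$, not from the choice of Hamming threshold, which is a fixed constant near $1/4$).
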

\begin{proof} 
The proof is given in three steps.
We shall first define a prototype $C^1$ bump function with Lipschitz continuous gradient. Next, 
we shall consider a class of semiconcave functions $\mathcal{U}_n$ defined as  superpositions of such a bump function, localized  
on the $N$-dimensional cubes of a partition of the domain $[-L, L]^N$. Finally, we shall derive
an optimal
lower bound on the covering number $\mathcal{N}_\varepsilon\big(\mathcal{U}_n\, | \, \mathbf{W}^{1,1}(\mathbb{R}^N)\big)$
for a suitable choice of $n$, which then yields~\eqref{low-entr-sc}.

\medskip

\noindent
\textit{Step 1: construction of a bump function.} \\
Consider the continuously differentiable function $c:[0,1]\rightarrow\mathbb{R}$ defined by 
\begin{equation}\label{beta}
c(r)=
\begin{cases}
\hspace{.cm}
\big(\frac{1}{4}\big)^2-\int_{0}^{r}\big(\frac{1}{4}-|\frac{1}{4}-s|\big)ds
\quad &\text{if}\qquad r\in\big[0,\frac{1}{2}\big],
\vspace{.3cm}
\\
\qquad 0\quad &\text{if}\qquad r\in \big[\frac{1}{2},1\big]\,.
\end{cases}
\end{equation}
Then, we compute
\begin{equation*}
c'(r)=
\begin{cases}
\hspace{.cm}
\big|\frac{1}{4}-r\big|-\frac{1}{4}
\quad &\text{if}\qquad r\in\big[0,\frac{1}{2}\big],
\vspace{.3cm}
\\
\qquad 0\quad &\text{if}\qquad r\in \big[\frac{1}{2},1\big]\,.
\end{cases}
\end{equation*}
Thus, $c'$ is Lipschitz continuous with Lipschitz constant $1$ and there holds
\begin{equation}
\label{cprime-lipsch}
\big|c'(r)\big|\leq r\qquad\ \forall~r\in[0,1]\,.
\end{equation}
Moeover, one has
\begin{equation}
\label{c-cprime-bounds}
\|c\|_{\mathbf{L}^{\infty}([0,1])}\leq\frac{1}{16},\qquad\qquad
\|c'\|_{\mathbf{L}^{\infty}([0,1])}\leq\frac{1}{4}\,.
\end{equation}
%
%
We now proceed to  construct  our bump function $b:[-L,L]^N\rightarrow\mathbb{R}$ as  follows:
\begin{equation}
\label{eq:bump}
b(x)=
\begin{cases}
\hspace{.cm}
 \frac{KL^2}{6}\, {c}\Big(\frac{|x|}{L}\Big)
\quad &\text{if}\qquad x\in B\big(0,\frac{L}{2}\big)
\vspace{.3cm}
\\
\hspace{.cm}
\qquad 0\quad &\text{if}\qquad x\in [-L,L]^N\setminus B\big(0,\frac{L}{2}\big)\,.
\end{cases}
\end{equation} 
One can check that 
\begin{equation}
\label{nabla-b1}
\nabla b(x)=\frac{KL}{6}\, c'\Big(\frac{|x|}{L}\Big)\,\frac{x}{|x|}\qquad \forall~x\in[-L,L]^N\setminus 0\qquad \mathrm{and}\qquad \nabla b(0)=0\,.
\end{equation}
Thus, because of~\eqref{c-cprime-bounds}, there holds
\begin{equation}
\label{b-nablab-ellein}
\|b\|_{\mathbf{L}^{\infty}([-L,L]^N)}\leq\frac{KL^2}{96}\,\qquad\qquad
\|\nabla b\|_{\mathbf{L}^{\infty}([-L,L]^N)}\leq\frac{KL}{24}\,.
\end{equation}
Furthermore, since $c'$ is $1$-Lipschitz, observing that
\begin{equation*}
\bigg|\frac{y}{|y|}-\frac{x}{|x|}\bigg|\leq \frac{2\big|y-x\big|}{|x|}
\qquad\forall~x,y\neq 0\,,
\end{equation*}
and relying on~\eqref{cprime-lipsch}, \eqref{nabla-b1}, it follows that $\nabla b$ is Lipschitz continuous with  constant $K/2$ in $[-L,L]^N$.
On the other hand, observing that 
\begin{equation}
\label{nabla-b-0}
\nabla b(x) =0\qquad\qquad\forall~x\in [-L,L]^N\setminus B(0,\tfrac{L}{2})\,,
\end{equation}
a
straightforward computation shows that
\begin{eqnarray*}
\big\|\nabla b\big\|_{\mathbf{L}^{1}([-L,L]^N)}&=&
\frac{KL}{6}\,\int_{B(0,L/2)}\Big|c'\Big(\frac{|x|}{L}\Big)\Big|dx\\
&=&\frac{KL^{N+1}}{6}\,\int_{B(0,1/2)}\big|c'(|x|)\big|dx=\frac{N KL^{N+1}\omega_{N}}{6}\int_{0}^{1}|c'(r)|r^{N-1}dr\\
&=&\frac{KL^{N+1}\omega_{N}}{6}\cdot\frac{2^N-1}{2(N+1)4^N}\,.
 \end{eqnarray*} 
 Thus, setting
 \begin{equation}\label{Norm-bump}
\beta_{_{[L,K,N]}}:=
\frac{KL^{N+1}\omega_{N}}{12}\cdot\frac{2^N-1}{(N+1)4^N},
 \end{equation}
 we have 
 \begin{equation}
 \label{nablab-elle1}
\big\|\nabla b\big\|_{\mathbf{L}^{1}([-L,L]^N)}= \beta_{_{[L,K,N]}}.
 \end{equation}
%
%
%
%
%

Now, given any positive integer $n\in\mathbb{N}$, let us consider the continuously differentiable 
function $b_n:\big[-\!\frac{L}{n},\frac{L}{n}\big]^N\rightarrow\mathbb{R}$ defined as 
\begin{equation}
\label{eq:bump-n}
b_n(x)=\frac{ b(nx)}{n^2}\,,\quad\forall x\in \Big[\!-\!\tfrac{L}{n},\tfrac{L}{n}\Big]^N.
\end{equation}
Thus, by~\eqref{eq:bump} one has 
\begin{equation}
\label{bn-supp}
b_n(x)=0\qquad\ \text{if}\qquad x\in \big[-\!\tfrac{L}{n},\tfrac{L}{n}\big]^N\setminus B\big(0,\tfrac{L}{2n}\big).
\end{equation}
Noting that 
$\nabla b_n(x)=\frac{1}{n}\cdot \nabla b(nx)$ for  $x\in [-\frac{L}{n},\frac{L}{n}]^N$, 
and relying on~\eqref{b-nablab-ellein}, \eqref{nablab-elle1}, one can easily check that
\begin{equation}
\label{bn-prop}
\big\|\nabla b_n\big\|_{\mathbf{L}^{\infty}( [-\frac{L}{n},\frac{L}{n}]^N)}\leq\frac{KL}{24n},\qquad\quad
\big\|\nabla b_n\big\|_{\mathbf{L}^{1}( [-\frac{L}{n},\frac{L}{n}]^N)}=\frac{1}{n^{N+1}}\,\big\|\nabla b\big\|_{\mathbf{L}^{1}([-L,L]^N)}=\frac{\beta_{_{[L,K,N]}}}{n^{N+1}}.
\end{equation}
Moreover, 
since $\nabla b$ is Lipschitz continuous with  constant $K/2$, we have that $\nabla b_n$ is also  Lipschitz 
continuous with  constant $K/2$. 
By Remark~\ref{semiconc-prop}
this implies that $b_{n}$ and $-b_{n}$  are semiconcave functions with  constant $K$.
\bigskip

\noindent
\textit{Step 2: a class of semiconcave functions defined as  superpositions of bump functions.} \\
For any integer $n\geqslant 1$ let us divide $[-L,L]^N$ into $n^{N}$ cubes of side $\frac{2L}{n}$
as in the proof of Proposition~\ref{Entropy1}. More precisely, we shall use the notation
\begin{equation}
[-L,L]^N= \bigcup_{\iota\in\lbrace{1,...,n\rbrace}^N} \square_{\iota}\,,
\end{equation} 
where $\iota=(\iota_1,...,\iota_N)\in\lbrace{1,...,n\rbrace}^N$ is a multiindex and
\begin{equation*}
\square_{\iota}:=(-L,...,-L)+\tfrac{L}{n}\,\iota+\Big[{\textstyle -\frac{L}{n},\frac{L}{n}}\Big]^N
\end{equation*}
is an $N$-dimensional cube centered at
$
x_{\iota}:=(-L,...,-L)+\frac{L}{n}\,\iota\,.
$
 Let us now adapt our bump function $b_n$ in~\eqref{eq:bump-n} to the cube $\square_{\iota}$ defining
\begin{equation*}
b^{\iota}_n(x)=
\begin{cases}
\hspace{.2cm}b_n(x-x_{\iota})\quad&\text{if} \qquad x\in\square_{\iota},
\vspace{.2cm}
\\
\hspace{.2cm}
\quad 0
\quad&\text{if} \qquad x\in\mathbb{R}^{N}\setminus\square_{\iota}\,.
\end{cases}
\end{equation*}
One can easily verify that the continuously differentiable function $b^\iota_n : \R^n\to \R$ shares the same properties of $b_n$.
In particular, by~\eqref{bn-supp}, \eqref{bn-prop}, there holds:
\begin{enumerate}
\item[(i)] $b^{\iota}_n(x)=0$ for all $x\in\mathbb{R}^{N}\backslash B\big(x_{\iota},\frac{L}{2n}\big)$,
\item[(ii)] $\|\nabla b^{\iota}_n\|_{\mathbb{L^{\infty}}(\mathbb{R}^{N})}\leq\frac{KL}{24n}$ \ and \  $\|\nabla b^{\iota}_n\|_{\mathbf{L}^1(\mathbb{R}^{N})}= \frac{\beta_{_{[L,K,N]}}}{n^{N+1}}$,
\item[(iii)] $b^{\iota}_n$ and $-b^{\iota}_n$ are semiconcave with  constant $K$. 
\end{enumerate}
Next, we proceed to construct a class of semiconcave functions in the set $\mathcal{SC}_{[L,M,K]}$, defined as combinations of the
bump functions $b^{\iota}_n$.
Namely, consider the set of $n^N$-tuples
\begin{equation*}
\Delta_n=\Big\lbrace{\delta=(\delta_{\iota})_{\iota\in\lbrace{1,...,n\rbrace}^N}\ |\ \delta_{\iota}\in\lbrace{-1,1\rbrace}\Big\rbrace},
\end{equation*}
and, for every $\delta=(\delta_{\iota})_{\iota\in\lbrace{1,...,n\rbrace}^N}\in \Delta_n$, define the function
$u_{\delta}:\R^N\to\R$ by setting
\begin{equation}
\label{uiota-def}
u_{\delta}:=\sum_{\iota\in\lbrace{1,...,n\rbrace}^N}\delta_{\iota}\, b^{\iota}_n\,.
\end{equation}
Observe that, by properties $(i)$-$(iii)$ above, every  function
$u_{\delta}$
has support contained in  $[-L,L]^N$, is semiconcave with semiconcavity constant $K$, and satisfies 
$
\|\nabla u\|_{\mathbf{L}^{\infty}(\mathbb{R}^{N})}\leqslant M$ provided that  
\begin{equation}
\label{K-ineq1}
n\geq\frac{KL}{24M}\,.
\end{equation}
Therefore, recalling definition~\eqref{semiconc-def}, one has
\begin{equation}
\label{Un-SC}
\mathcal{U}_n:=\Big\lbrace{u_{\delta}\ \big|\ \delta\in\Delta_n\Big\rbrace}
\subset \mathcal{SC}_{[K,L,M]}\,,
\end{equation}
for all such $n$.
Hence, in order to establish~\eqref{low-entr-sc}, it will be sufficient to show that there holds
\begin{equation}
\label{low-entr-sc-2}
\mathcal{H}_{\varepsilon}\Big(\mathcal{U}_n\ \big|\ \mathbf{W}^{1,1}(\mathbb{R}^N)\Big)\geqslant 
\beta^{\mathcal{SC}}_{_{[L,K,N]}}\cdot\frac{1}{\varepsilon^N}
\end{equation}
for every $\varepsilon$ sufficiently small and for a suitable choice of $n$  satisfying~\eqref{K-ineq1}.
\bigskip

\noindent
\textit{Step 3: estimate of the $\varepsilon$-entropy for superpositions of bump functions
by a combinatorial argument.} \\
Towards an estimate of the covering number $\mathcal{N}_{\varepsilon}\big(\mathcal{U}_n\ \big|\ \mathbf{W}^{1,1}(\mathbb{R}^N)\big)$,
fix $\bar{\delta}\in\Delta_n$, and let us define the set of $n^N$-tuples 
$$
\mathcal{I}_{\bar{\delta},n}(\varepsilon)=\Big\lbrace{\delta\in\Delta_n\ |\ \|\nabla u_{\bar{\delta}}-\nabla u_{\delta}\|_{\mathbf{L}^{1}(\mathbb{R}^{N})}\leq\varepsilon\Big\rbrace}\,.
$$
Notice that, by construction, 
the cardinality of the set $\mathcal{I}_{\bar{\delta},n}(\varepsilon)$ is independent of the choice of~$\bar{\delta}\in\Delta_n$.
Let us denote it by
$$
C_n(\varepsilon):=\card\big(\mathcal{I}_{\bar{\delta},n}(\varepsilon)\big).
$$
Moreover, any element of an $\varepsilon$-cover in $\mathbf{W}^{1,1}$ of $\mathcal{U}_n$ contains at most
$C_n(2\varepsilon)$ functions of $\mathcal{U}_n$.
Hence, since the cardinality of $\mathcal{U}_n$ is the same as the cardinality of $\Delta_n$, which is 
$\card(\Delta_n)=2^{n^N}$, it follows that the number of sets in an $\varepsilon$-cover in $\mathbf{W}^{1,1}$ of $\mathcal{U}_n$
is at least
\begin{equation}
\label{low-entr-sc-2}
\mathcal{N}_{\varepsilon}\Big(\mathcal{U}_n\ \big|\ \mathbf{W}^{1,1}(\mathbb{R}^N)\Big)\geqslant 
\frac{2^{n^N}}{C_n(2\varepsilon)}\,.
\end{equation}
Aiming at an upper bound on $C_n(2\varepsilon)$, observe that for any given pair $\delta, \overline\delta\in \Delta_n$,
one has
\begin{equation}\label{W-11}
\big\|\nabla u_{\bar{\delta}}-\nabla u_{\delta}\big\|_{\mathbf{L}^{1}(\mathbb{R}^{N})}= d\big(\bar{\delta},\delta\big)\cdot 2\big\|\nabla b_n\big\|_{\mathbf{L}^{1}( [-\frac{L}{n},\frac{L}{n}]^N)}\,,
\end{equation}
where 
\begin{equation}
\label{d-def}
d\big(\bar{\delta},\delta\big):=\card\Big(\big\lbrace{\iota\in\big\lbrace{1,...,n\rbrace}^N\ |\ \bar{\delta}_{\iota}\neq\delta_{\iota}\big\rbrace}\Big)\,.
\end{equation}
Thus, relying on~\eqref{bn-prop}, \eqref{W-11}, we deduce that
\begin{equation}\label{W-12}
d(\bar{\delta},\delta)\leq\frac{n^{N+1}}{\beta_{_{[L,K,N]}}}\cdot \varepsilon
\qquad\Longleftrightarrow\qquad
\big\|\nabla u_{\bar{\delta}}-\nabla u_{\delta}\big\|_{\mathbf{L}^{1}(\mathbb{R}^{N})}\leq 2\varepsilon\,. 
\end{equation}
Hence, performing a standard combinatorial computation of the number of $n^N$-tuples that differ for a given number of entries,
we find
\begin{equation}
\label{Cn}
C_n(2\varepsilon)\leq\sum_{l=0}^{\Big\lfloor \frac{n^{N+1}}{\beta_{_{[L,K,N]}}}\cdot\varepsilon \Big\rfloor} \binom{\:\;n^N}{l}.
\end{equation}
Next, observe that if $X_1,...,X_{n^N}$ are independent random variables with uniform Bernoulli distribution
$\mathbb{P}(X_i=1)=\mathbb{P}(X_i=0)=\frac{1}{2}$, then, for any $k\leqslant n^N$, one has 
\begin{equation}\label{cb}
\sum_{l=0}^k \binom{\;\;n^N}{l}=2^{n^{N}}\cdot\mathbb{P}\Big(X_1+...+X_{n^N}\leqslant k\Big)\,.
\end{equation}
Now, set $S_{n^N}=X_1+...+X_{n^N}$, and  recall Hoeffding's inequality (\cite[Theorem 2]{Hoeffding}) which guarantees that, for any  $\mu>0$, 
\begin{equation}
\label{H}
{\mathbb P} \big(S_{n^N} - {\mathbb{E}}[S_{n^N}] \leqslant -\mu\big)  \leqslant \exp \left( - \frac{2 \mu^{2}}{n^N}\right),
\end{equation}
where ${\mathbb{E}}[S_{n^N}]$ denotes the expectation of $S_{n^N}$. 
Since ${\mathbb{E}}[S_{n^N}]=\frac{n^N}{2}$, taking 
$\mu=\frac{n^N}{2}-\Big\lfloor \frac{n^{N+1} }{\beta_{_{[L,K,N]}}}\cdot\varepsilon \Big\rfloor$ and assuming 
\begin{equation}
\label{eps-bound1}
n\leqslant\frac{\beta_{_{[L,K,N]}}}{2\,\varepsilon}\,,
\end{equation}
from (\ref{Cn}), (\ref{cb}) and (\ref{H}) it follows that
\begin{equation}
\label{low-entr-sc-3}
\begin{aligned}
C_n(2\varepsilon)&\leqslant
2^{n^N}\,\exp\left(-\frac{2\Big(\frac{n^N}{2}-\Big\lfloor \frac{n^{N+1} }{\beta_{_{[L,K,N]}}} \cdot\varepsilon\Big\rfloor\Big)^2}{n^N}\right)\\
&\leqslant 2^{n^N}\,\exp\Bigg(-\frac{n^N}{2}\, \bigg(1-\frac{2n\,\varepsilon}{\beta_{_{[L,K,N]}}}\bigg)^2\Bigg).
\end{aligned}
\end{equation}
In turn, \eqref{low-entr-sc-3} together with~\eqref{low-entr-sc-2}, yields
\begin{equation}
\label{low-entr-sc-4}
\mathcal{N}_{\varepsilon}\Big(\mathcal{U}_n\ \big|\ \mathbf{W}^{1,1}(\mathbb{R}^N)\Big)\geqslant 
\exp\Bigg(\frac{n^N}{2}\, \bigg(1-\frac{2n\,\varepsilon}{\beta_{_{[L,K,N]}}}\bigg)^2\Bigg)
\end{equation}
for all $n$ satisfying~\eqref{eps-bound1}.
Now, if we take
\begin{equation}
\label{eps0-bound}
0<\varepsilon\leq 
\min\bigg\{\tfrac{\beta_{_{[L,K,N]}}}{8}, \tfrac{6M\, \beta_{_{[L,K,N]}}}{KL}\bigg\},
\end{equation}
choosing
\begin{equation}\label{choice of n}
n_{\varepsilon}:=\bigg\lfloor \frac{\beta_{_{[L,K,N]}}}{4\,\varepsilon} \bigg\rfloor +1,
\end{equation}
one easily check that $n_{\varepsilon}$ satisfies both bounds~\eqref{K-ineq1}, \eqref{eps-bound1}.
Hence, relying on~\eqref{Un-SC}, \eqref{low-entr-sc-4}, we find the lower bound
\begin{equation}
\label{low-entr-sc-5}
\begin{aligned}
\mathcal{N}_{\varepsilon}\Big(\mathcal{SC}_{[K,L,M]}\ \big|\ \mathbf{W}^{1,1}(\mathbb{R}^N)\Big)
&\geqslant\mathcal{N}_{\varepsilon}\Big(\mathcal{U}_{n_\varepsilon}\ \big|\ \mathbf{W}^{1,1}(\mathbb{R}^N)\Big)\\
\noalign{\smallskip}
&\geqslant 
\exp\bigg(\frac{n_{\varepsilon}^N}{8}\bigg)\geqslant\exp\Bigg(\frac{\beta^N_{_{[L,K,N]}}}{8\,(4\varepsilon)^N}\Bigg),
\end{aligned}
\end{equation}
for all $\varepsilon$ satisfying~\eqref{eps0-bound}.
In turn, this estimate yields~\eqref{low-entr-sc}  for all $\varepsilon$ satisfying~\eqref{eps1-bound},
taking $\log_2$ of both sides of~\eqref{low-entr-sc-5}
and observing that, by~\eqref{eq:Gamma}, \eqref{Norm-bump}, one has
\begin{equation*}
\beta_{_{[L,K,N]}}\geqslant 
\frac{K\, \omega_N\, L^{N+1}}{24(N+1)\,2^{N}},
\qquad\quad
\frac{1}{8\, \ln 2}\cdot \Bigg(\frac{\beta_{_{[L,K,N]}}}{4}\Bigg)^{\!\!N}\!\!\geqslant
\beta^{\mathcal{SC}}_{_{[L,K,N]}}\,.
\end{equation*}
\end{proof}
\subsection{Conclusion of the proof of Theorem~\ref{upper-lower-estimate}-${\bf (ii)}$}

Given, $L,M,T>0$, combining Proposition~\ref{control-part} and Proposition~\ref{Lower estimate}
we find that, for every
\begin{equation}
\label{eps2-bound}
0<\varepsilon\leq \min\bigg\{M,\,\frac{1}{4\,\big\|D ^2H(0)\big\|\cdot T},\,\frac{L}{4\,\big\|D ^2H(0)\big\|\cdot T}\bigg\}\cdot \frac{\omega_N\, L^{N}}{(N+1)\,2^{N+8}}\,,
\end{equation}
there holds
\begin{equation}
\label{Lower-est-H2}
\mathcal{H}_{\epsilon}\Big(S_T(\mathcal C_{[L,M]})+T\cdot H(0)\ \big|\ \mathbf{W}^{1,1}(\mathbb{R}^N)\Big)\geqslant 
\beta^{\mathcal{SC}}_{_{\big[\frac{L}{2},\,\frac{1}{4\,\|D ^2H(0)\|\cdot T},\,N\big]}}\cdot\frac{1}{\varepsilon^N},
\end{equation} 
where
\begin{equation}
\beta^{\mathcal{SC}}_{_{\big[\frac{L}{2},\,\frac{1}{4\,\|D ^2H(0)\|\cdot T},\,N\big]}}
=
\frac{1}{8\cdot\ln 2}\cdot \bigg(\frac{\omega_N\, L^{N+1}}{3(N+1)\,2^{(2N+8)}\cdot\|D ^2H(0)\|\cdot T}\bigg)^{\!\!N}.
\end{equation}
This establishes the lower bound~\eqref{Lower-est-H}.
\qed
\bigskip
\section*{Acknowledgements}
This work was partially supported by the National Group for Mathematical Analysis and Probability (GNAMPA) of the Istituto Nazionale di Alta Matematica ``Francesco Severi'' (INdAM),  the  CNRS-INdAM European Research Group (GDRE) on Control of Partial Differential Equations (CONEDP), and the European Union
$7th$ Framework Programme [FP7-PEOPLE-2010-ITN] under grant agreement n.264735-SADCO.
Fabio Ancona was partially supported by the Miur-Prin 2012 Project "Nonlinear Hyperbolic Partial Differential Equations, 
Dispersive and Transport Equations: theoretical and applicative aspects" 
and by 
Fondazione CaRiPaRo Project "Nonlinear Partial Differential Equations: models, analysis, and control-theoretic problems".
\bigskip

\end{document}